\documentclass[a4paper,10pt]{amsart}  
\usepackage[latin1]{inputenc} 
\usepackage{amssymb}
\usepackage{colordvi}
\usepackage{graphicx,tikz}
\usetikzlibrary{patterns,calc}
\usepackage{vmargin}
\usepackage{todonotes}
\usepackage{hyperref}
\usepackage{overpic,graphics}
\usepackage{algpseudocode}
\usepackage{enumerate}
\usepackage{multirow}
\usepackage{mathtools}

\setmargrb{1in}{1in}{1in}{1in} 
\hfuzz1.5pc 
\topmargin=80pt


\newtheorem{theorem}{Theorem}[section]
\newtheorem{proposition}[theorem]{Proposition}
\newtheorem{lemma}[theorem]{Lemma}
\newtheorem{corollary}[theorem]{Corollary}

\theoremstyle{definition}
\newtheorem{definition}[theorem]{Definition}

\newtheorem{algo}[theorem]{Algorithm}

\theoremstyle{definition}
\newtheorem{example}[theorem]{Example}
\newtheorem{remark}[theorem]{Remark}

\newtheorem{proofpart}{Part}
\newtheorem{proofcase}{Case}
\makeatletter
\@addtoreset{proofpart}{theorem}
\@addtoreset{proofcase}{theorem}
\makeatother


\newcommand{\NN}{\mathbb{N}}

\newcommand{\PP}{\mathbb{P}}

\newcommand{\HH}{\mathcal{H}\mathcal{H}}


\newcommand{\reverse}[1]{\overleftarrow{#1}} 

\DeclareMathOperator{\Cat}{Cat} 
\DeclareMathOperator{\inv}{inv} 
\DeclareMathOperator{\spe}{\sigma} 
\DeclareMathOperator{\card}{\#} 
\DeclareMathOperator{\horiz}{horiz} 

\newcommand{\wole}{\preccurlyeq} 
\newcommand{\woless}{\prec}
\newcommand{\wocover}{\prec\mathrel{\mkern-5mu}\mathrel{\cdot}}
\newcommand{\tc}[1]{#1^{\mathsf{tc}}} 
\newcommand{\meet}{\wedge} 
\newcommand{\join}{\vee} 
\newcommand{\maxs}{\Sigma} 
\newcommand{\pidown}{\pi_\downarrow}
\newcommand{\piup}{\pi_\uparrow}
\newcommand{\epidown}{\equiv_{\pidown}}
\newcommand{\epiup}{\equiv_{\piup}}


\newcommand{\vT}{{\mathcal T}} 

\newcommand{\streestovpaths}{{\varphi}} 
\newcommand{\streestovtrees}{{\widetilde \varphi}} 

\newcommand{\ie}{\textit{i.e.}~} 

\definecolor{darkblue}{rgb}{0,0,0.7} 
\newcommand{\darkblue}{\color{darkblue}} 
\newcommand{\defn}[1]{\emph{\darkblue #1}} 

\usepackage{todonotes}

\graphicspath{{figures/}}

\title[The $s$-weak order and $s$-permutahedra I]{The $s$-weak order and $s$-permutahedra I: \\
combinatorics and lattice structure}
\date{\today}


\author[C.~Ceballos]{Cesar Ceballos$^{\diamond}$}
\address[C.~Ceballos]{Institute of Geometry, TU Graz, Graz, Austria}
\email{\href{mailto:cesar.ceballos@tugraz.at}{\texttt{cesar.ceballos@tugraz.at}}}
\urladdr{http://www.geometrie.tugraz.at/ceballos/}

\author[V.~Pons]{Viviane Pons$^{\star}$}
\address[V.~Pons]{CNRS, IRL CRM Montr\'eal, Canada -- Universit\'e Paris-Saclay, CNRS,
Laboratoire Interdisciplinaire des Sciences du Num\'erique, Orsay, France.}
\email{\href{mailto:viviane.pons@lisn.upsaclay.fr}{\texttt{viviane.pons@lisn.upsaclay.fr}}}
\urladdr{\url{https://www.lri.fr/~pons/}}

\thanks{This project was supported by the 
ANR-FWF International Cooperation Project PAGCAP, funded by
the ANR Project ANR-21-CE48-0020 and
the FWF Project I 5788. 
Cesar Ceballos was also supported by the Austrian Science Fund FWF, Project P 33278.
}

\keywords{Weak order, Permutahedron, Tamari Lattice, Catalan Combinatorics, Metasylvester Lattice.}
\subjclass[2020]{Primary 20F55, 06B05 \and 06B10; Secondary 52B05}

\makeatletter
\def\l@section{\@tocline{1}{5pt}{0pc}{}{}}
\makeatother
\let\oldtocpart=\tocpart
\renewcommand{\tocpart}[2]{\bf\large\oldtocpart{#1}{#2}}
\let\oldtocsection=\tocsection
\renewcommand{\tocsection}[2]{\bf\oldtocsection{#1}{#2}}


\begin{document}

\maketitle

\begin{abstract}
This is the first contribution of a sequence of papers introducing the notions of $s$-weak order and $s$-permutahedra, certain discrete objects that are indexed by a sequence of non-negative integers~$s$. 
In this first paper, we concentrate purely on the combinatorics and lattice structure of the $s$-weak order, a partial order on certain decreasing trees which generalizes the classical weak order on permutations.
In particular, we show that the $s$-weak order is a semidistributive and congruence uniform lattice, generalizing known results for the classical weak order on permutations.

Restricting the $s$-weak order to certain trees gives rise to the $s$-Tamari lattice, a sublattice which generalizes the classical Tamari lattice.
We show that the $s$-Tamari lattice can be obtained as a quotient lattice of the~\mbox{$s$-weak}~order when~$s$ has no zeros, 
and show that the $s$-Tamari lattices (for arbitrary~$s$) are isomorphic to the~$\nu$-Tamari~lattices of Pr\'eville-Ratelle and Viennot.

The underlying geometric structure of the $s$-weak order will be studied in a sequel of this paper, where we introduce the notions of $s$-permutahedra and $s$-associahedra. 
\end{abstract}

\tableofcontents

\section*{Introduction}

The weak order is a partial order on the set of permutations of $[n]$, which is defined as the inclusion order of their corresponding inversion sets.
This partial order turns out to be a lattice, whose Hasse diagram can be realized geometrically as the edge graph of a polytope called the permutahedron.
Restricting the weak order to the set of $231$-avoiding permutations gives rise to the Tamari lattice, a well-studied lattice whose 
Hasse diagram can be realized as the edge graph of another polytope called the associahedron~\cite{hoissen12associahedra}.

\begin{figure}[htbp]
\begin{center}
\begin{tabular}{cc}
\includegraphics[height= 8cm]{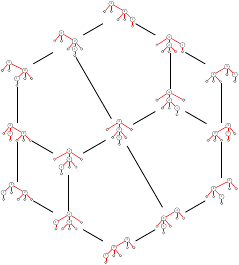}&
\qquad
\includegraphics[height= 8cm]{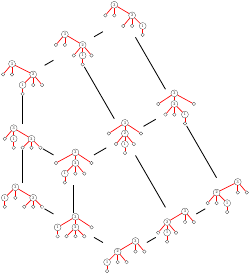}
\end{tabular}
\caption{The $s$-weak order and the $s$-Tamari lattice for $s=(0,2,2)$.
Their Hasse diagrams are the edge graphs of the $s$-permutahedron and the $s$-associahedron.}
\label{fig_s022_perm_asso}
\end{center}
\end{figure}

The Tamari lattice and the associahedron have been central objects in the combinatorial and discrete geometry communities for the past decades. See for example the collection of papers gathered in~\cite{hoissen12associahedra} or the recent survey on Loday's associahedron~\cite{PSZ23}. Many interesting connections have been made with the weak order, related in particular to Hopf algebraic structures~\cite{LodayRonco02_permutations,LodayRonco98_binarytrees,AguiarSottile_StructureLodayRonco,HNT05} and geometric realizations~\cite{HL07,HLT11}. The Tamari lattice is a lattice quotient of the weak order that can be realized as a generalized permutahedron, and it was proven in~\cite{PS19} 
that every lattice quotient of the weak order satisfies this property. 
The Tamari lattice also inspired the study of many more general objects such as Cambrian lattices~\cite{Reading-cambrianLattices} and the Permutree lattices~\cite{PP18}, which in turn connect to other subjects such as cluster algebras~\cite{FominZelevinsky-ClusterAlgebrasI,FominZelevinsky-ClusterAlgebrasII}. 

New specific interests in the Tamari lattice arose in connection to representation theory and generalizations of the theory of diagonal harmonics. For instance, the number of intervals in the Tamari lattice~\cite{chapoton_tamari_intervals_2005} is conjectured to be equal to the dimension of the alternating component of an $\mathfrak S_n$-module in the study of trivariate diagonal harmonics~\cite{haiman_conjectures_1994,BPR12}.
This motivated the introduction of the $m$-Tamari lattice in~\cite{BPR12}, whose number of intervals was conjectured to be the dimension of the alternating component of an $\mathfrak S_n$-module in higher trivariate diagonal harmonics. A formula for their enumeration and connections to representation theory can be found in~\cite{bousquet_representation_2013,bousquet_number_2011}.
Later on, the more general concept of the $\nu$-Tamari lattice was introduced in~\cite{PrevilleRatelleViennot} (which includes the $m$-Tamari and classical Tamari lattices). The connection to diagonal harmonics and representation theory is still unclear at this time, but several further investigations have been made, see for instance~\cite{HopfDreams_2022}. 
Other examples include the connections with the enumeration of non-separable planar maps in~\cite{fang_enumeration_2017}, and the tropical geometric realizations of $\nu$-Tamari lattices as the edge graph of a polytopal complex called the $\nu$-associahedron in~\cite{ceballos_vtamarivtrees_2018}. The interest for these new combinatorial and geometrical structures along with the numerous relations between the weak order and the Tamari lattice lead to a natural question: {\bf what is the equivalent of the weak order for the $\nu$-Tamari lattices?} In the case of $m$-Tamari, the question was briefly approached in an extended abstract~\cite{pons_lattice_2015}. The Hopf algebraic aspects were later studied in~\cite{novelli_hopf_2014}. 

This article is the first contribution of a sequence of papers which aims to answer this question by generalizing and completing the work started in~\cite{pons_lattice_2015}, while connecting it to interesting new geometrical structures in relation to the $\nu$-Tamari lattice and the $\nu$-associahedron. We define the \emph{$s$-weak order} which is indexed by a weak composition $s=(s(1),s(2),\dots, s(n))$. The case were $s = (1,1,\dots,1)$ is the classical weak order. In general, the $s$-weak order is a partial order on a set of certain trees which we call \emph{$s$-decreasing trees}, 
and is defined as the inclusion order of their \emph{$s$-tree inversions}.
We show that the $s$-weak order has the structure of a lattice (Theorem~\ref{thm:lattice}), and give a complete characterization of 
its cover relations in terms of a simple combinatorial rule on the trees (Theorem~\ref{thm:cover-relations}).
We also study its lattice properties and show that it is semidistributive and congruence \mbox{uniform (Theorem~\ref{thm:hh})}. 
 
Restricting the $s$-weak order to certain trees, which play the role of $231$-avoiding permutations, 
gives rise to a sublattice which we call the \emph{$s$-Tamari lattice} (Theorem~\ref{thm:stam-sublattice}).
We show that if $s$ contains no zeros, then the $s$-Tamari lattice can also be obtained as a quotient lattice of the $s$-weak order (Theorem~\ref{thm_sTam_quotientLattice}).
We also give a simple characterization of the cover relations of the $s$-Tamari lattice (Theorem~\ref{thm_sTamari_covering_relations}),
and show that it is isomorphic to a well chosen \emph{$\nu$-Tamari lattice} (Theorem~\ref{thm_sTam_vTam_paths}).

Some of the connections between the Tamari lattice and the theory of diagonal harmonics mentioned above were originally discovered in connection to the study of Garcia--Haiman $n!$-conjecture~\cite{garsia_graded_1993,garsia_natural_1996}, which was later proved by Haiman in~\cite{haiman_hilbert_2001}. Their result implies, in particular, that the dimension of certain Garcia--Haiman space is equal to $n!$. 
On the other hand, the number of elements of the $s$-weak order is equal to the \emph{$s$-factorial number} 
$s!:=1\cdot (s(n)+1) \cdot (s(n-1)+s(n)+1)\cdot \dots \cdot (s(2)+\dots+s(n)+1)$.
For instance, if $s=(m,m,\dots,m)$ consists of $n$ copies of $m$, then $s!=\prod_{i=0}^{n-1}(1+im)$. 
It would be interesting to investigate whether these $s$-factorial numbers appear as the dimensions of interesting~$S_n$ modules in representation theory. 
We leave this as an open problem for the interested reader.

As in the classical case, the $s$-weak order and the $s$-Tamari lattice possess beautiful underlying geometric structures, which are apparent in Figure~\ref{fig_s022_perm_asso} and Figure~\ref{fig:3d-example}. 
These structures, which we call the \emph{$s$-permutahedron} and the \emph{$s$-associahedron}, will be introduced and studied in a sequel of this work. 
This first paper may be regarded as the combinatorial foundations of the theory, while the second part will have a more geometric flavour. 

The case where $s=(m,\dots, m)$ corresponds to the afore mentioned paper~\cite{pons_lattice_2015}, where it was called the ``metasylvester lattice''. Note that \cite{pons_lattice_2015} has only been published as an extended abstract so many of the proofs, even for the $m$ case, are actually novel to this paper. Furthermore, note that this article is the first part of a long version of an extended abstract of our work, which was presented at the FPSAC 2019 conference in Ljubljana and is available in~\cite{FPSAC2019}. A blog post about the story behind the talk appears in~\cite{BlogViv}.

Beside the paper, we provide the SageMath~\cite{SageMath2022} code used to reach the results as well as a demo SageMath worksheet with computations of all examples in this paper~\cite{SageDemoI}.

\begin{figure}[ht]
\begin{tabular}{cc}
\includegraphics[height=7.5cm]{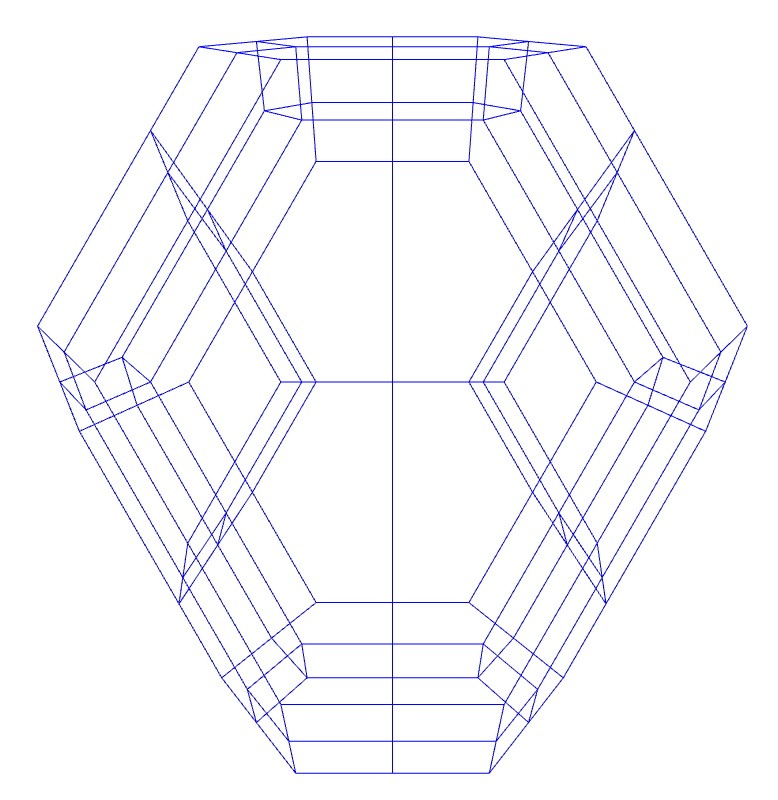}
&
\includegraphics[height=7.5cm]{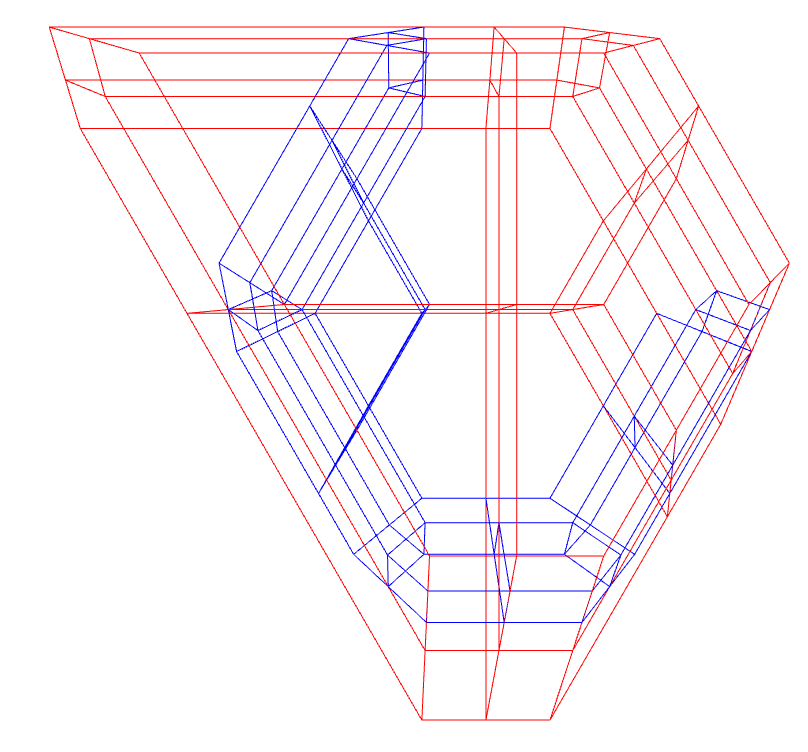}
\end{tabular}
\caption{Geometric realizations of the 3-dimensional $s$-permutahedron and \mbox{$s$-associahedron}, for $s=(0,2,2,2)$. 
Their edge graphs realize the Hasse diagrams of the $s$-weak order and the $s$-Tamari lattice, respectively.
}
\label{fig:3d-example}
\end{figure}

\section{The $s$-weak order}

We denote by $\PP$ the set of positive integers, by $\NN$ the set of nonnegative integers and for any 
$n \in \PP$ let $[n]:=\{1,2,\dots,n\}$. A \defn{weak composition} is a finite sequence 
$\mu=(\mu(1),\mu(2),\dots, \mu(n))$ of numbers~\mbox{$\mu(i) \in \NN$}. For a weak composition $\mu$, we 
define its \defn{weight} $|\mu|:=\sum_i \mu(i)$ and its \defn{length} $\ell(\mu):=n$. For example 
for $\mu=(2,0,3,4,0,1)$ we have that $\ell(\mu)=6$ and $|\mu|=10$.
A \defn{composition} is a weak composition~$\mu$ such that $\mu(i)\ne 0$ for all $i\in [\ell]$, in 
other words, a composition is a finite sequence of entries in~$\PP$. 

\subsection{$s$-permutations}
Let $s=(s(1),s(2),\dots,s(n))$ be a composition (\emph{without zero entries}). 
an $s$-permutation is any permutation of the word $1^{s(1)}2^{s(2)}\dots n^{s(n)}$. We say that an $s$-permutation is $121$-avoiding (respectively $132$-avoiding) if it does not contain any subsequence of the form~$a\dots b \dots a$ for $a<b$ (respectivelty $a\dots c \dots b$ for $a<b<c$). 
We will often refer to $s$ as the \defn{{\bf s}ignature}. This is the reason why we use the letter $s$.

The $s$-permutations appear in the work on Hopf algebras of Novelli and Thibon~\cite{novelli_hopf_2014}, who study generalizations of the Hopf algebra of permutations for ``$m$-permutations" of $1^{m}2^{m}\dots n^{m}$, and of the Hopf algebra on planar binary trees for $(m+1)$-ary trees. The authors introduce a notion of metasylvester congruence on permutations that allows them to obtain Hopf algebras based on decreasing trees. The element representatives of the congruence classes are $121$-avoiding $s$-permutations for $s=(m,m,\dots,m)$, see~\cite[Proposition~3.10]{novelli_hopf_2014}. These 121-avoiding $m$-permutations and associated lattices called metasylvester lattices were also discovered and studied by Pons in~\cite{pons_lattice_2015}. One interesting property of this lattice is that the $m$-Tamari lattice can be obtained both as a sublattice and as a quotient lattice of it~\cite[Theorem~4.7]{pons_lattice_2015}. 

In this section we generalize the results in~\cite{pons_lattice_2015} for general signatures $s$. 
This gives a natural notion of ``rational" metasylvester lattices in the case where the entries of $s+\bf 1$  count the numbers of boxes in each of the rows of an~$a\times b$ rectangle that are crossed by its main diagonal,
when $a\geq b$ are relatively prime. 
It turns out that the number of $132$-avoiding permutations in this case, see e.g~\cite{ceballos_signature_2018}, coincides with the rational Catalan number 
\[
\Cat(a,b) = \frac{1}{a+b} {a+b \choose a}.
\]
The rational Catalan elements also have a natural lattice structure described by Pr\'eville-Ratelle and Viennot in~\cite{PrevilleRatelleViennot}, which we refer to as the \emph{rational Tamari lattice}. More generally, they introduce a \emph{$v$-Tamari lattice} for arbitrary lattice paths $v$ consisting of east and north steps in the plane. 
As we will see in Section~\ref{sec_sTamarivTamari},
our signature $s$ can be thought as the composition that counts the number of east steps of $v$ in each of the rows  starting from top to bottom. 
   
In this set up, it is natural to study the case where the signature $s$ has some zero entries, corresponding for example to the case when $v$ has two consecutive north steps. In this case, there is a row with zero number of east steps that will contribute a zero to the signature. 
If the signature $s$ has zero entries, 
it is difficult to define well behaved notions of 121- and 132-avoiding $s$-permutations. 
In this case, these notions 
can be described more naturally in the context of $s$-decreasing labeled trees.

\subsection{$s$-decreasing trees}
Let $s=(s(1),s(2),\dots,s(n))$ be a weak composition (\emph{with possible zero entries}). A \defn{$s$-decreasing tree} is a tree with $n$ internal nodes labeled from $1$ to $n$, such that node $i$ has $s(i)+1$ children and all its descendants have smaller labels. It is straightforward to check that the number of $s$-decreasing trees is equal to the \defn{$s$-factorial number} defined by 
$$1\cdot (s(n)+1) \cdot (s(n-1)+s(n)+1)\cdot \dots \cdot (s(2)+\dots+s(n)+1).$$ 
If $s$ has no entries equal to zero, there is a natural bijection between $s$-decreasing trees and $s$-permutations, which is given by labeling the ``caverns" of the tree with the label of its node and reading them in \emph{preorder}. An example of this procedure is illustrated in Figure~\ref{fig:s-perm_from_tree}. We write $\spe(T)$ the 121-avoiding $s$-permutation corresponding to the $s$-decreasing tree $T$. In particular, it is clear that $\spe(T)$ avoids $121$: the numbers in between two occurrences of a letter $a$ correspond to some descendants of the node labeled by $a$ and are by definition smaller than $a$. 

\begin{figure}[htbp]
\begin{center}

\begin{tabular}{c}
{ \newcommand{\nodea}{\node[draw,circle] (a) {$4$}
;}\newcommand{\nodeb}{\node[draw,circle] (b) {$3$}
;}\newcommand{\nodec}{\node[draw,circle] (c) {$ $}
;}\newcommand{\noded}{\node[draw,circle] (d) {$1$}
;}\newcommand{\nodee}{\node[draw,circle] (e) {$ $}
;}\newcommand{\nodef}{\node[draw,circle] (f) {$ $}
;}\newcommand{\nodeg}{\node[draw,circle] (g) {$ $}
;}\newcommand{\nodeh}{\node[draw,circle] (h) {$ $}
;}\newcommand{\nodei}{\node[draw,circle] (i) {$2$}
;}\newcommand{\nodej}{\node[draw,circle] (j) {$ $}
;}\newcommand{\nodeba}{\node[draw,circle] (ba) {$ $}
;}\newcommand{\cavb}{\node[red] {$3$}
;}\newcommand{\cavd}{\node[red] {$1$}
;}\newcommand{\cava}{\node[red] {$4$}
;}\newcommand{\cavi}{\node[red] {$2$}
;}\begin{tikzpicture}[auto]
\matrix[column sep=.3cm, row sep=.3cm,ampersand replacement=\&]{
         \&         \&         \&         \&         \& \nodea  \&         \&         \&         \\ 
         \&         \& \nodeb  \&         \&    \cava     \& \nodeh  \&  \cava       \& \nodei  \&         \\ 
 \nodec  \&    \cavb     \& \noded  \&    \cavb     \& \nodeg  \&         \& \nodej  \&    \cavi     \& \nodeba \\ 
         \& \nodee  \&   \cavd      \& \nodef  \&         \&         \&         \&         \&         \\
};

\path[ultra thick, red] (d) edge (e) edge (f)
	(b) edge (c) edge (d) edge (g)
	(i) edge (j) edge (ba)
	(a) edge (b) edge (h) edge (i);
\end{tikzpicture}} \\
$\spe(T) = 313442$
\end{tabular}

\caption{Bijection from $s$-decreasing trees to $121$-avoiding $s$-permutations, with $s = (1,1,2,2)$}
\label{fig:s-perm_from_tree}
\end{center}
\end{figure}
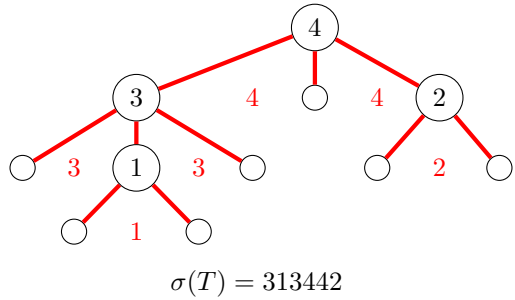

Note that each node is given a unique label. For practicality, we then write \emph{the node $y$} instead of \emph{the node labeled by $y$}. Our goal now is to define a partial order on $s$-descreasing trees. Let us first introduce some useful notations.

\begin{definition}
\label{def:tree-notations}
Let $T$ be an $s$-decreasing tree and $y \in T$ with $m = s(y)$, we write $T_0^y, \dots, T_{m}^y$ the children subtrees from left to right of the node $y$. 
Note that in this definition, a \defn{child} always means the full subtree and not just a node.
\begin{enumerate}
\item $T_0^y$ is the \defn{left child} of $y$. If $s(y) > 0$, then $T_0^y$ is a \defn{strict left child} of $y$;
\item $T_{m}^y$ is the \defn{right child} of $y$. If $s(y) > 0$, then $T_{m}^y$ is a \defn{strict right child} of $y$.
\item $T_i^y$ for $i \neq 0$ and $i \neq m$ is a \defn{middle child} of $y$.
\item If $s(y) = 0$ then $T_0^y$ is the \defn{unique child} of $y$.
\end{enumerate}

Note that a unique child is also both a left and a right child but not a strict left child nor a strict right child. In addition, we say that a node $x$ is
\begin{itemize}
\item \defn{right} (resp. \defn{left}) of a node $y$ if $x$ is not a descendant of $y$ and if their first common ancestor $z$ is such that $x \in T_j^z$ and $y \in T_i^z$ with $j > i$ (resp. $j < i$);
\item \defn{weakly right} (resp. \defn{weakly left}) of a subtree $T_i^y$ if $x \in T_j^y$ with $j \geq i$ (resp. $j \leq i$) or if $x$ is right (resp. left) of $y$.
\item \defn{strongly right} (resp. \defn{strongly left}) of a a subtree $I_i^y$ if $x \in T_j^y$ with $j > i$ (resp. $j < i$) or $x$ is right (resp. left) of $y$.
\end{itemize} 
\end{definition}

An example is given on Figure~\ref{fig:ex-dtree}. We have that $3, 2, $ and $1$ are right of $4$ and also strongly right of $T_0^5$ and weakly right of $T_2^5$. 

\begin{remark}
If $x < y$, then there are only 3 options for $x$: being left of $y$, being a descendant of $y$, being right of $x$. Similarly, for $0 \leq i \leq s(y)$, if $x$ is not weakly right of $T_i^y$ (resp. weakly left), then $x$ is strongly left (strongly right). 
\end{remark}

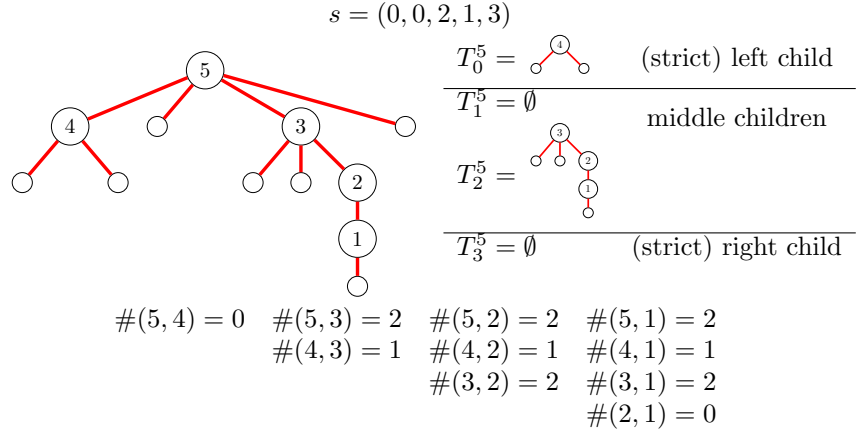
\begin{figure}[ht]
$s = (0,0,2,1,3)$

\begin{tabular}{clc}
\multirow{4}{*}{\scalebox{.8}{{ \newcommand{\nodea}{\node[draw,circle] (a) {$5$}
;}\newcommand{\nodeb}{\node[draw,circle] (b) {$4$}
;}\newcommand{\nodec}{\node[draw,circle] (c) {$ $}
;}\newcommand{\noded}{\node[draw,circle] (d) {$ $}
;}\newcommand{\nodee}{\node[draw,circle] (e) {$ $}
;}\newcommand{\nodef}{\node[draw,circle] (f) {$3$}
;}\newcommand{\nodeg}{\node[draw,circle] (g) {$ $}
;}\newcommand{\nodeh}{\node[draw,circle] (h) {$ $}
;}\newcommand{\nodei}{\node[draw,circle] (i) {$2$}
;}\newcommand{\nodej}{\node[draw,circle] (j) {$1$}
;}\newcommand{\nodeba}{\node[draw,circle] (ba) {$ $}
;}\newcommand{\nodebb}{\node[draw,circle] (bb) {$ $}
;}\begin{tikzpicture}[auto]
\matrix[column sep=.3cm, row sep=.3cm,ampersand replacement=\&]{
         \&         \&         \&         \& \nodea  \&         \&         \&         \&         \\ 
         \& \nodeb  \&         \& \nodee  \&         \&         \& \nodef  \&         \& \nodebb \\ 
 \nodec  \&         \& \noded  \&         \&         \& \nodeg  \& \nodeh  \& \nodei  \&         \\ 
         \&         \&         \&         \&         \&         \&         \& \nodej  \&         \\ 
         \&         \&         \&         \&         \&         \&         \& \nodeba \&         \\
};

\path[ultra thick, red] (b) edge (c) edge (d)
	(j) edge (ba)
	(i) edge (j)
	(f) edge (g) edge (h) edge (i)
	(a) edge (b) edge (e) edge (f) edge (bb);
\end{tikzpicture}}
}}&
$T_0^5 = \begin{aligned}\scalebox{.4}{{ \newcommand{\nodea}{\node[draw,circle] (a) {$4$}
;}\newcommand{\nodeb}{\node[draw,circle] (b) {$ $}
;}\newcommand{\nodec}{\node[draw,circle] (c) {$ $}
;}\begin{tikzpicture}[auto]
\matrix[column sep=.3cm, row sep=.3cm,ampersand replacement=\&]{
         \& \nodea  \&         \\ 
 \nodeb  \&         \& \nodec  \\
};

\path[ultra thick, red] (a) edge (b) edge (c);
\end{tikzpicture}}}\end{aligned} $ & (strict) left child \\ \cline{2-3}
 & $T_1^5 = \emptyset $ & \multirow{2}{*}{middle children} \\
 & $T_2^5 = \begin{aligned}\scalebox{.4}{{ \newcommand{\nodea}{\node[draw,circle] (a) {$3$}
;}\newcommand{\nodeb}{\node[draw,circle] (b) {$ $}
;}\newcommand{\nodec}{\node[draw,circle] (c) {$ $}
;}\newcommand{\noded}{\node[draw,circle] (d) {$2$}
;}\newcommand{\nodee}{\node[draw,circle] (e) {$1$}
;}\newcommand{\nodef}{\node[draw,circle] (f) {$ $}
;}\begin{tikzpicture}[auto]
\matrix[column sep=.3cm, row sep=.3cm,ampersand replacement=\&]{
         \& \nodea  \&         \\ 
 \nodeb  \& \nodec  \& \noded  \\ 
         \&         \& \nodee  \\ 
         \&         \& \nodef  \\
};

\path[ultra thick, red] (e) edge (f)
	(d) edge (e)
	(a) edge (b) edge (c) edge (d);
\end{tikzpicture}}
}\end{aligned}$ & \\ \cline{2-3}
 & $T_3^5 = \emptyset $ & (strict) right child \\
\end{tabular}

\vspace{.5cm}
\begin{tabular}{llll}
$\card(5,4) = 0$ & $\card(5,3) = 2$ & $\card(5,2) = 2$ & $\card(5,1) = 2$ \\
                 & $\card(4,3) = 1$ & $\card(4,2) = 1$ & $\card(4,1) = 1$ \\
                 &                  & $\card(3,2) = 2$ & $\card(3,1) = 2$ \\
                 &                  &                  & $\card(2,1) = 0$
\end{tabular}
\caption{an $s$-decreasing tree and its tree-inversions}
\label{fig:ex-dtree}
\end{figure}

We can now introduce a fundamental notion on $s$-decreasing trees. 

\begin{definition}[Tree-inversions]
\label{def:tree-inversions}
For all $y > x$ in an $s$-decreasing tree $T$, we define the \defn{cardinality} $\card_T(y,x)$ of $(y,x)$ in $T$ by 
\begin{enumerate}
\item if $s(y) = 0$ or if $x$ is weakly left of $T_0^y$, then $\card_T(y,x) = 0$;
\label{def-cond:tree-inversion0}
\item if $x \in T_i^y$ with $T_i^y$ a middle child of $y$, then $\card_T(y,x) = i$;
\label{def-cond:tree-inversionm}
\item if $x$ is weakly right of $T_{s(y)}^y$, then $\card_T(y,x) = s(y)$.  
\label{def-cond:tree-inversionr}
\end{enumerate}

This covers all positions of $x$ relatively to $y$. If $\card_T(y,x) > 0$, we say that $(y,x)$ is a \defn{tree-inversion} of $T$ and we write $\inv(T)$ the multi-set of tree-inversions counted with their cardinalities. When the context is clear, we omit $T$ and simply write $\card(y,x)$.
\end{definition}

See for example all the tree-inversions corresponding to a given tree on Figure~\ref{fig:ex-dtree}. Note that in the case where $s(y) =0$ and $x$ is right of $y$, both conditions \eqref{def-cond:tree-inversion0} and \eqref{def-cond:tree-inversionr} apply and are consistent with $\card_T(y,x) = 0$. In particular, there is no tree-inversion $(y,x)$ if $s(y) = 0$ even if $x$ is right of $y$. If $s$ is a composition, this definition coincides with the one given in \cite{pons_lattice_2015}. In particular, when $s = (1,1,\dots, 1)$ then the $s$-decreasing trees are decreasing binary trees, in bijection with permutations. In this case, the tree-inversions of $T$ are exactly the inversions of $\spe(T)$ taken in terms of values. More generally, we consider tree-inversions a natural analogue of classical inversions of permutations. As such, the sets of tree-inversions can be characterized, following some rules very similar to the ones of classical permutations inversions sets.

\begin{definition}
\label{def:multi-inversion-set}
A \defn{multi inversion set} $I$ on $1, \dots, n$ is a multi set of inversions $(y,x)$ with $1 \leq x < y \leq n$. We write $\card_I(y,x)$ the number of occurrences of $(y,x)$ in $I$. If there is no occurrence of $(y,x)$ in $I$ we write $(y,x) \notin I$ or equivalently $\card_I(y,x) = 0$.

Given two multi inversion sets $I$ and $J$, we say that $I$ is \defn{included} in $J$ (resp. strictly included) and write $I \subseteq J$ (resp. $I \subset J$) if $\card_I(y,x) \leq \card_J(y,x)$ (resp. $\card_I(y,x) < \card_J(y,x)$) for all $1 \leq x < y \leq n$.

Given a weak composition $s$ with $\ell(s) = n$, we write $\maxs_s$ the \defn{maximal $s$-inversion set} defined by  $\card_{\maxs_s}(y,x) = s(y)$ for all $1 \leq x < y \leq n$.
\end{definition}

\begin{definition}
\label{def:tree-inversion-set}
Let $s$ be a weak composition with $\ell(s) = n$ and $I \subseteq \maxs_s$ a multi inversion set. Then $I$ is said to be a \defn{$s$-tree-inversion set} if it satisfies these two following rules.
\begin{itemize}
\item \defn{Transitivity}: if $a < b < c$ with $\card(c,b) = i$, then $\card(b,a) = 0$ or $\card(c,a) \geq i$.
\item \defn{Planarity}: if $a < b < c$ with $\card(c,a) = i$, then $\card(b,a) = s(b)$ or $\card(c,b) \geq i$.
\end{itemize}
\end{definition}

The transitivity and planarity conditions can be easily understood from the geometric drawing of the tree, as illustrated in Figure~\ref{fig:transitivity_planarity}. The transitivity is about the two possible positions of $a$ with respect to~$b$ and $c$, while planarity is about the two possible positions of $b$ with respect to $a$ and $c$. 
In particular, one can check that $\maxs_s$ is an $s$-tree-inversion set. Note that when $\card(c,a) = 0$, the planarity
is always satisfied on $a < b < c$. When $s = (1,1,\dots,1)$, these two rules correspond to the characterisation of a permutation inversion set. Transitivity is then classical transitivy of inversions: if $(c,b) \in \inv(\sigma)$ and $(b,a) \in \inv(\sigma)$ then $(c,a) \in \inv(\sigma)$. And planarity is the rule that says that when $(c,a) \in \inv(\sigma)$ then
either $(c,b) \in \inv(\sigma)$ or $(b,a) \in \inv(\sigma)$, depending on where is the value $b$ in the permutation. In general, we have the following proposition.

\begin{figure}[htbp]
\begin{center}
\includegraphics[width = 0.5\textwidth]{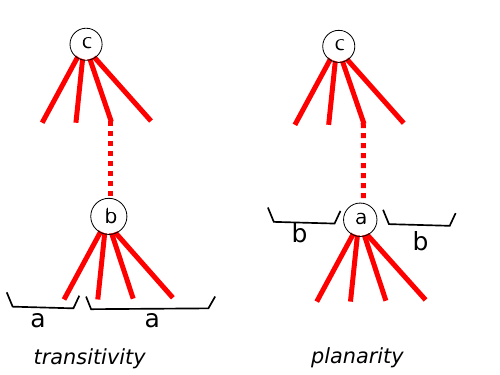}
\caption{Illustration of the transitivity and planarity conditions on $s$-tree inversion sets.}
\label{fig:transitivity_planarity}
\end{center}
\end{figure}

\begin{proposition}
\label{prop:tree-inversions-bij}
For $s$ a given weak composition, 
the multiset of tree-inversions of any $s$-decreasing tree is an $s$-tree-inversion set. 
Moreover, this gives a bijection between $s$-decreasing trees and $s$-tree-inversions sets. 
\end{proposition}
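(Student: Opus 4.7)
The plan is to split the proof into two halves: first, verify that $\inv(T)$ satisfies the defining axioms of an $s$-tree-inversion set for every $s$-decreasing tree $T$; second, construct an explicit inverse by induction on $n = \ell(s)$ that reconstructs $T$ from any abstract $s$-tree-inversion set $I$.

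For the first half, I would fix $T$ and check transitivity and planarity by direct case analysis on the relative tree positions of $a<b<c$. For transitivity, the hypothesis $\card_T(c,b) = i > 0$ pins $b$ either to a middle child $T_i^c$ (when $i<s(c)$) or to $T_{s(c)}^c$ / right of $c$ (when $i=s(c)$); the hypothesis $\card_T(b,a) > 0$ forces $a$ to be a descendant of $b$ in a non-leftmost child, or to be right of $b$. In every resulting configuration $a$ is weakly right of $T_i^c$, so $\card_T(c,a) \geq i$. Planarity is the dual check, where under $\card_T(c,a) = i > 0$ one traces the possible placements of $b$ and finds that either $b$ sits in a position with $\card_T(c,b) \geq i$, or else $a$ ends up right of $b$ (including the degenerate case $s(b)=0$), so that $\card_T(b,a) = s(b)$.

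For the second half, the base cases $n\in\{0,1\}$ are immediate. For the inductive step, note that since the tree must be decreasing, the root is forced to be $n$. The reconstruction partitions $[n-1]$ into groups $G_j := \{x \in [n-1] : \card_I(n,x) = j\}$ for $j=0,\dots,s(n)$, declares $G_j$ to be the label set of the $j$-th child subtree of $n$, and recursively builds that subtree from the restricted pair $(s_j, I_j)$, where $s_j$ is $s$ restricted to $G_j$ in order and $I_j$ is the restriction of $I$ to pairs within $G_j$. The axioms of an $s_j$-tree-inversion set for $I_j$ are inherited from those of $I$ applied to triples in $G_j$. The key observation that makes everything coherent is that the transitivity and planarity axioms completely determine $\card_I(y,x)$ whenever $x\in G_j$, $y\in G_k$ with $j \ne k$: applying transitivity to $x<y<n$ with $\card_I(n,y)=k > \card_I(n,x)=j$ forces $\card_I(y,x)=0$, and dually planarity with $j>k$ forces $\card_I(y,x)=s(y)$. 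These forced values match precisely the tree-cardinality of a pair lying in different children of the root, so no information is lost by restricting to the $I_j$.

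Assembling, the recursive identities $\inv(T_j)=I_j$ together with the cross-group computation $\card_T(y,x)\in\{0,s(y)\}$ and the root values $\card_T(n,x)=j$ on $G_j$ yield $\inv(T)=I$, while uniqueness is automatic since any realizing tree must have root $n$ and the same partition into subtrees. I expect the main obstacle to be the bookkeeping in the case analysis of part one: the cardinality formula has three branches (leftmost, middle, rightmost), and several degenerate situations (notably $s(y)=0$, where the leftmost and rightmost children coincide, and the boundary cases where a middle child index equals $s(c)$) must be handled with care so that the forced inequality $\card_T(c,a)\geq i$ or the forced equality $\card_T(b,a)=s(b)$ is obtained uniformly across all configurations.
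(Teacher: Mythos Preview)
Your proposal is correct and takes essentially the same approach as the paper: verify transitivity and planarity by case analysis on the relative positions of $a<b<c$ in $T$, then reconstruct $T$ from $I$ by recursively partitioning the label set according to $\card_I(\max,\cdot)$ and checking that cross-subtree pairs have the forced values $0$ or $s(y)$. The only cosmetic difference is that the paper phrases the reconstruction as an algorithm on arbitrary subsets $C\subseteq[n]$ (so no relabeling is needed when recursing into a subtree), whereas your induction on $n=\ell(s)$ implicitly requires either a relabeling of each $G_j$ to an initial segment or a statement of the induction hypothesis for trees labeled by arbitrary finite ordered sets; this is a routine adjustment.
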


We will use the following Lemma which is immediate from Definition \ref{def:tree-inversions} by following rules \eqref{def-cond:tree-inversionm} or~\eqref{def-cond:tree-inversionr}.

\begin{lemma}
\label{lem:tree-inversion-trivial}
For $T$ an $s$-decreasing tree of some weak composition $s$ and $i > 0$, then $\card_T(y,x) \geq i$ if and only if $s(y) \geq i$ and $x$ is weakly right of $T_i^y$.
\end{lemma}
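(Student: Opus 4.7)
My plan is to prove both directions by direct case analysis on which of the three clauses in Definition~\ref{def:tree-inversions} computes $\card_T(y,x)$. The hypothesis $i>0$ is crucial because it rules out the ``trivial'' clause~\eqref{def-cond:tree-inversion0}, which is the only source of asymmetry in the definition, and the Remark after Definition~\ref{def:tree-notations} gives the clean dichotomy that lets us translate between ``$x$ weakly right of $T_j^y$'' for various $j$ and the positional clauses in the definition.

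For the forward direction, suppose $\card_T(y,x)\geq i>0$. Then clause~\eqref{def-cond:tree-inversion0} cannot apply, so $s(y)>0$ and $x$ is not weakly left of $T_0^y$. The computing clause is therefore either~\eqref{def-cond:tree-inversionm} or~\eqref{def-cond:tree-inversionr}. In the middle-child case $x\in T_j^y$ with $0<j<s(y)$, we have $\card_T(y,x)=j\geq i$, so $j\geq i$ and $s(y)>j\geq i$; by definition, $x\in T_j^y$ with $j\geq i$ means $x$ is weakly right of $T_i^y$. In the weakly-right case $\card_T(y,x)=s(y)\geq i$ immediately gives the inequality $s(y)\geq i$, and ``weakly right of $T_{s(y)}^y$'' is (since $s(y)\geq i$) a special case of ``weakly right of $T_i^y$'', as either $x\in T_{s(y)}^y$ with $s(y)\geq i$ or $x$ is right of~$y$.

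For the backward direction, assume $s(y)\geq i$ and $x$ is weakly right of $T_i^y$. The latter means $x\in T_j^y$ for some $j\geq i>0$, or $x$ is right of $y$; in particular $x$ cannot be weakly left of $T_0^y$, so clause~\eqref{def-cond:tree-inversion0} does not apply (recall $s(y)\geq i>0$). If $x$ is right of $y$ or $x\in T_{s(y)}^y$, clause~\eqref{def-cond:tree-inversionr} gives $\card_T(y,x)=s(y)\geq i$. Otherwise $x\in T_j^y$ with $i\leq j<s(y)$, so $T_j^y$ is a middle child and clause~\eqref{def-cond:tree-inversionm} gives $\card_T(y,x)=j\geq i$. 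In every subcase the desired inequality holds.

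The only place requiring a moment of care is checking that the two descriptions of ``weakly right of $T_i^y$'' (being in a $T_j^y$ with $j\geq i$, or being right of $y$) are handled consistently across the middle-child and rightmost-child clauses; this is essentially the observation already recorded in the Remark, so I expect no real obstacle beyond bookkeeping.
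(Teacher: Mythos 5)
Your proof is correct and is exactly the case analysis the paper has in mind: the paper states the lemma with no written proof, declaring it ``immediate from Definition~\ref{def:tree-inversions} by following rules~\eqref{def-cond:tree-inversionm} or~\eqref{def-cond:tree-inversionr},'' and your argument simply spells out that bookkeeping (ruling out clause~\eqref{def-cond:tree-inversion0} via $i>0$, then checking the middle-child and weakly-right cases in both directions). No gaps.
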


\begin{proof}[Proof of Proposition \ref{prop:tree-inversions-bij}]
First, let $T$ be an $s$-decreasing tree, $I$ its multiset of tree-inversions and let us check that $I$ is an $s$-tree-inversion set. By definition, it is clearly included in $\maxs_s$.
\begin{itemize}
\item Transitivity: if $\card(c,b) = 0$, there is nothing to prove. We suppose $\card(c,b)  = i > 0$ which means that $b$ is weakly right of $T_i^c$. Now either $a$ is left of $b$ which implies $\card(b,a) = 0$, or $a$ is a descendant of $b$, or $a$ is right of $b$. It can be checked that the last two cases imply that $a$ is also weakly right of $T_i^c$ which means $\card(c,a) \geq i$.
\item Planarity: suppose that $\card(c,a) = i > 0$ which means that $a$ is weakly right of $T_i^c$. Now let us look at the different possibilities for $b$. Either $b$ is weakly right of $T_i^c$ which implies $\card(c,b) \geq i$ by Lemma \ref{lem:tree-inversion-trivial}. If not, $b$ is then strongly left of $T_i^c$. In particular, $b$ is left of $a$ and then $a$ is weakly right of $T_{s(b)}^b$ which gives  $\card(b,a) = s(b)$.
\end{itemize}

The application that takes an $s$-decreasing tree and sends it to its tree-inversion set is injective. Indeed take $T$ and $T'$ such that $\inv(T) = \inv(T')$. Being decreasing trees, both their roots are labeled with~$n$. Let $a \in [n-1]$. The node $a$ belongs to some subtree $T_i^n$ of $T$ and ${T'}_j^n$ of $T'$. By definition of tree-inversions, we have $i = \card_T(n,a) = \card_{T'}(n,a) = j$. This means that for all $0 \leq i \leq s(n)$, the labeling set of the subtree $T_i^n$ is equal to the labeling set of the subtree ${T'}_i^n$. In particular, by the decreasing rule, both subtrees have the same root. We can then apply this same reasoning recursively to show that $T = T'$.

To prove that the construction is surjective, we now take $I$ an $s$-tree-inversion set and construct an $s$-decreasing tree $T$ for which $I$ is the set of tree-inversions. We use the following recursive algorithm that we call on $C = [n]$.

\begin{algo}[ConstructTree]
\label{algo:tree-inversions}

~

Given a weak composition $s$ and an $s$-tree-inversion set $I$.
\begin{algorithmic}
\Require $C \subseteq [n]$
\Ensure a decreasing tree $T$ labeled by $C$
\If{$C$ is empty}
    \State return the empty tree
\EndIf
\State $c \gets \max(C)$ 
\State $C_0 \gets \emptyset, \dots C_{s(c)} \gets \emptyset$
\For{$a$ in $C \setminus c$}
	\State add $a$ in $C_i$ where $i = \card_I(c,a)$
\EndFor
\State return the tree with root $c$ and subtrees $ConstructTree(C_0), \dots ConstructTree(C_{s(c)})$
\end{algorithmic}
\end{algo}

Basically, the algorithm partitions $[n-1]$ into $s(n)+1$ subsets which will give recursively the $s(n)+1$ subtrees $T_0^n, \dots T_{s(n)}^n$. It is clear that the result is an $s$-decreasing tree (at each step, we label the root with the local maximum and the number of subtrees we create is given by $s$). We have to check that the tree-inversions of $T$ are indeed given by $I$. Let $x < y$. We distinguish three cases which corresponds to the three possible positions of $x$ relatively to $y$.

\begin{proofcase} \textbf{$x$ is left of $y$.} In this case, we have $\card_T(y,x) = 0$. Let $z$ be the first common ancestor of $x$ and $y$ (closest to $x$ and $y$). We have $x \in T_i^z$ and $y \in T_j^z$ with $i < j$. We have constructed $T$ from the algorithm, in particular the subsets of the trees $T_i^z$ and $T_j^z$ have been constructed when the algorithm was running on the subset of the tree rooted in $z$. This gives that $\card_I(z,x) = i$ and  $\card_I(z,y) =j$. The transitivity rule of Definition \ref{def:tree-inversion-set} then says that if $\card_I(y,x) > 0$, then $i = \card_I(z,x) \geq \card_I(z,y) = j$ which contradicts the fact that $i < j$. We conclude that in this case $\card_I(y,x) = 0 = \card_T(y,x)$.
\end{proofcase}

\begin{proofcase} \textbf{$x$ is a descendant of $y$.} The node $x$ belongs so some subtree $T_i^y$ and so we have $\card_T(y,x) = i$. By the construction of $T$, we also have $\card_I(y,x) = i$. Otherwise, $x$ would have been added in a different subset.
\end{proofcase}

\begin{proofcase} \textbf{$x$ is right of $y$.} In this case, we have $\card_T(y,x) = s(y)$.  Similarly to case 1, we take $z$ to be the first common ancestor to $x$ and $y$ and we have $x \in T_j^z$ and $y \in T_i^z$ with $i < j$. In particular, $\card_I(z,x) = j$ and $x < y < z$. We follow the planarity rule of Definition \ref{def:tree-inversion-set}. Because $\card_I(z,y) = i < j$, then $\card_I(y,x) = s(b) = \card_T(y,x)$.
\end{proofcase}

In all three cases, $\card_I(y,x) = \card_T(y,x)$. We conclude that Algorithm~\ref{algo:tree-inversions} is indeed the inverse construction of an $s$-decreasing tree from its tree-inversions which proves the proposition.
\end{proof}

\subsection{The $s$-weak order or the lattice of $s$-decreasing trees}

Tree-inversions are an analogue of permutation inversions: this motivates the following definition of an analogue of the weak order on $s$-decreasing trees.

\begin{definition}
Let $R$ and $T$ be two $s$-decreasing trees of a same weak composition $s$ with $\ell(s) = n$. We say that $R \wole T$ if 
\begin{equation}
\inv(R) \subseteq \inv(T)
\end{equation}
using the inclusion of multi inversion sets from Definition \ref{def:multi-inversion-set}. We call the relation $\wole$ the \defn{$s$-weak order}.
\end{definition}

\begin{lemma}
For a given weak composition $s$, the relation $\wole$ is a partial order on $s$-decreasing trees.
\end{lemma}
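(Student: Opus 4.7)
The plan is to verify the three defining properties of a partial order---reflexivity, antisymmetry, and transitivity---directly from the definition of $\wole$ as the inclusion order on multisets of tree-inversions. Since $\wole$ is pulled back from the inclusion order on multi inversion sets (Definition~\ref{def:multi-inversion-set}) via the map $T \mapsto \inv(T)$, each property will follow from the corresponding property of multiset inclusion together with the injectivity already established in Proposition~\ref{prop:tree-inversions-bij}.

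For reflexivity, I would simply note that $\card_{\inv(T)}(y,x) \leq \card_{\inv(T)}(y,x)$ holds trivially for every pair $1 \leq x < y \leq n$, so $\inv(T) \subseteq \inv(T)$ and hence $T \wole T$. For transitivity, if $R \wole T$ and $T \wole U$, then for every pair $(y,x)$ we have
\[
\card_{\inv(R)}(y,x) \leq \card_{\inv(T)}(y,x) \leq \card_{\inv(U)}(y,x),
\]
which yields $\inv(R) \subseteq \inv(U)$, i.e.\ $R \wole U$. This just transports transitivity of $\leq$ on $\NN$ through the definition.

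The only step that actually uses content from earlier in the paper is antisymmetry. Suppose $R \wole T$ and $T \wole R$. Then $\card_{\inv(R)}(y,x) = \card_{\inv(T)}(y,x)$ for all pairs, so the multisets $\inv(R)$ and $\inv(T)$ coincide. By Proposition~\ref{prop:tree-inversions-bij}, the map $T \mapsto \inv(T)$ is a bijection between $s$-decreasing trees and $s$-tree-inversion sets, so $R = T$.

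There is no real obstacle here: once Proposition~\ref{prop:tree-inversions-bij} is in hand, the lemma is essentially the observation that any injective map from a set into a poset induces a partial order on that set. The proof is a few lines and could even be presented as a single paragraph invoking the bijection of Proposition~\ref{prop:tree-inversions-bij} and the fact that multiset inclusion is a partial order on multi inversion sets bounded above by $\maxs_s$.
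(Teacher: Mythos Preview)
Your proof is correct and follows essentially the same approach as the paper: reflexivity and transitivity are immediate from multiset inclusion, and antisymmetry comes from the injectivity part of Proposition~\ref{prop:tree-inversions-bij}. The paper merely states these steps more tersely, but the logic is identical.
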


\begin{proof}
It is immediate from the definition that $\wole$ is both reflexive and transitive. It is also clear that if $R \wole T$ and $T \wole R$, then $\inv(T) = \inv(R)$ (the cardinality is the same for all the tree-inversions). By Proposition \ref{prop:tree-inversions-bij} then $T = R$.
\end{proof}

\begin{figure}[htbp]
\begin{center}
\begin{tabular}{cc}
\includegraphics[height=8cm]{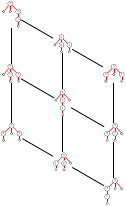} &
\includegraphics[height=8cm]{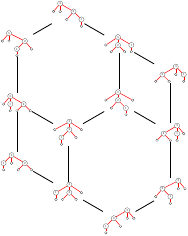} \\
$s = (0,0,2)$ & $s = (0,1,2)$ \\
\includegraphics[height=8cm]{sweaklattice_s022.pdf} &
\includegraphics[height=8cm]{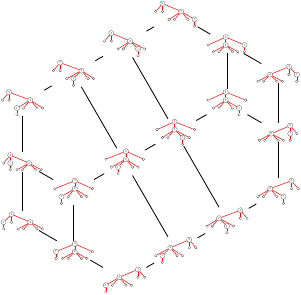} \\
$s = (0,2,2)$ & $s = (0,3,2)$
\end{tabular}

\caption{Examples of $s$-weak lattices.}
\label{fig:sweak_lattice}
\end{center}

\end{figure}

\begin{proposition}
\label{prop:symmetry}
The $s$-weak order is symmetric, \ie there is an isomorphism $\phi$ from $s$-decreasing trees to $s$-decreasing trees such that $T \wole R$ if and only if $\phi(R)  \wole \phi(T)$. 
\end{proposition}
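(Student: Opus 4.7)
The plan is to construct $\phi$ as the \emph{mirror} operation on $s$-decreasing trees: recursively reverse the left-to-right order of the children at every node. That is, if $T$ has root $y$ with children $T_0^y, T_1^y, \dots, T_{s(y)}^y$ (from left to right), define $\phi(T)$ to be the tree with root $y$ whose children, read from left to right, are $\phi(T_{s(y)}^y), \phi(T_{s(y)-1}^y), \dots, \phi(T_0^y)$. Since we only reorder subtrees, $\phi(T)$ has the same underlying node set with the same labels and the same number of children at each node, so it is again an $s$-decreasing tree. It is also immediate from the recursive definition that $\phi$ is an involution, hence a bijection on $s$-decreasing trees.

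The key computation is the identity
\begin{equation}
\card_{\phi(T)}(y,x) \;=\; s(y) - \card_T(y,x) \qquad \text{for all } y > x.
\label{eq:mirror-card}
\end{equation}
I would prove this by the case split of Definition~\ref{def:tree-inversions} (which, by the remark after it, is exhaustive). If $x$ is a descendant of $y$ in $T$, then $x \in T_i^y$ where $i = \card_T(y,x)$; by construction this same subtree sits in position $s(y)-i$ from the left in $\phi(T)$, so $\card_{\phi(T)}(y,x) = s(y)-i$, which matches \eqref{eq:mirror-card} including the boundary cases $i=0$ and $i=s(y)$. If $x$ is left of $y$ in $T$, then tracing through the recursion on the first common ancestor shows that $x$ becomes right of $y$ in $\phi(T)$; hence $\card_T(y,x)=0$ and $\card_{\phi(T)}(y,x)=s(y)$. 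The symmetric case, $x$ right of $y$ in $T$, is handled identically. This gives \eqref{eq:mirror-card} in all cases.

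Once \eqref{eq:mirror-card} is established, the order-reversing property is a one-line consequence. Indeed, for any pair of $s$-decreasing trees $T$ and $R$,
\begin{equation*}
T \wole R \;\Longleftrightarrow\; \card_T(y,x) \le \card_R(y,x) \ \forall\, y>x \;\Longleftrightarrow\; \card_{\phi(R)}(y,x) \le \card_{\phi(T)}(y,x) \ \forall\, y>x \;\Longleftrightarrow\; \phi(R) \wole \phi(T),
\end{equation*}
using \eqref{eq:mirror-card} in the middle step. Thus $\phi$ is the claimed anti-automorphism of the $s$-weak order.

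I do not expect a serious obstacle: the only slightly delicate point is checking \eqref{eq:mirror-card} when $x$ is not a descendant of $y$. The cleanest way is to note that ``$x$ is left of $y$'' is witnessed by their first common ancestor $z$ with $x \in T_i^z$ and $y \in T_j^z$, $i<j$; mirroring at $z$ (and at every ancestor above it, which does not affect the relative order of $x$ and $y$ via $z$) turns $i<j$ into $s(z)-i > s(z)-j$, placing $x$ strictly to the right of $y$ in $\phi(T)$. The symmetric argument handles the other side. No further verification is required since $\phi(T)$ is defined directly as a tree, and so its inversion multiset is automatically an $s$-tree-inversion set by Proposition~\ref{prop:tree-inversions-bij}.
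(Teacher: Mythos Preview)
Your proof is correct and takes essentially the same approach as the paper: both use the horizontal mirror involution $\phi$ and the key identity $\card_{\phi(T)}(y,x) = s(y) - \card_T(y,x)$ to conclude. The paper simply asserts this identity as immediate, whereas you spell out the case analysis; your version is more detailed but not different in substance.
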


\begin{proof}
This is immediate with $\phi$ being the involution that the reflects the tree along a vertical line (swaps horizontally). 
For an $s$-decreasing tree $T$, we have $\card_{\phi(T)}(b,a) = s(b) - \card_T(b,a)$.
Therefore, $T \wole R$ if and only if $\phi(R)  \wole \phi(T)$.
\end{proof}

You can see different examples on Figure~\ref{fig:sweak_lattice}. Note that when $s = (1,1, \dots, 1)$, this is actually the classical weak order on permutations which is known to be a lattice. In the case where $s =(m, m, \dots, m)$, this is the \emph{metasylvester lattice} defined and proven to be a lattice in \cite{pons_lattice_2015}. 
The goal of this section is to prove that the $s$-weak order is a lattice for general $s$.  

In the case where $s$ is a strict composition (i.e. $s$ contains no zeros), it is possible to see the $s$-weak order is a suborder of the weak order on permutations. Using the natural bijection between $s$-decreasing trees and $s$-permutations (see Figure~\ref{fig:s-perm_from_tree}), one can see that it is actually a \emph{sublattice} of some ideal of the weak order (slightly generalizing the results of \cite{pons_lattice_2015}). The more interesting case is when~$s$ contains some zero values. Then, there is no natural definition of $s$-permutations, and so, no easy bijection that would allow us to see the $s$-weak order as a sublattice of the weak order. This is why we use a more general approach here.

We will re-enact the proof that the weak order is a lattice in a more general setting. Indeed, we use the characterization of $s$-decreasing trees through their $s$-tree-inversion sets. The set of multi inversion sets is endowed with a natural poset structure through inclusion of multisets. 
We will use the notions of \emph{union} and \emph{transitive closure} in order to prove that the tree-inversion sets induce a lattice.

\begin{definition}
\label{def:union-tree-inversions}
The \defn{union} of two multi inversion sets $I$ and $J$ is given by 
\begin{equation}
\card_{I \cup J} (y,x) = \max \left( \card_{I}(y,x), \card_{J}(y,x) \right)
\end{equation}
for all $x < y$. 
\end{definition}

\begin{remark}
\label{rem:union-tree-inversions}
Note that the definition has been chosen so that $I \cup J$ is the smallest multi inversion set in terms of inclusion such that $I \subseteq I \cup J$ and $J \subseteq I \cup J$.
\end{remark}

\begin{definition}
\label{def:tc-tree-inversions}
 Let $I$ be a multi inversion set. A \defn{transitivity path} between two values $c > a$ is a list of $k \geq 2$ values $c = b_1 > b_2 > \dots > b_k = a$ such that $\card_I(b_i, b_{i+1}) > 0$ for all $1 \leq i < k$. Note that if $\card_I(c,a) > 0$, then $(c,a)$ itself is a transitivity path. We write $\tc{I}$ the \defn{transitive closure} of $I$ and define for all $a < c$, $\card_{\tc{I}}(c,a) = v$ with $v$ the maximal value of $\card_I(b_1,b_2)$ for $c = b_1 > \dots > b_k = a$ a transitivity path between $c$ and $a$.
\end{definition}

\begin{example}
Let $I$ be the following multi inversion set:

\begin{tabular}{lll}
$\card_I(4,3) = 3$ & $\card_I(4,2) = 2$ & $\card_I(4,1) = 0$ \\
                   & $\card_I(3,2) = 1$ & $\card_I(3,1) = 0$ \\
                   &                    & $\card_I(2,1) = 2$ 
\end{tabular}

Then $\tc{I}$ is given by

\begin{tabular}{lll}
$\card_{\tc{I}}(4,3) = 3$ & $\card_{\tc{I}}(4,2) = 3$ & $\card_{\tc{I}}(4,1) = 3$ \\
                          & $\card_{\tc{I}}(3,2) = 1$ & $\card_{\tc{I}}(3,1) = 1$ \\
                          &                           & $\card_{\tc{I}}(2,1) = 2$ 
\end{tabular}

Indeed, 
\begin{itemize}
\item there are two transitivity paths between $4$ and $1$: $ 4 > 3 > 2 > 1$ with $\card_I(4,3) = 3$, and $4 > 2 > 1$ with $\card_I(4,2) = 2$. This gives $\card_{\tc{I}}(4,1) = 3$.
\item There are two transitivity paths between $4$ and $2$: $4 > 3 > 2$ with $\card_I(4,3) = 3$, and $4 > 2$ with $\card_I(4,2) = 2$. This gives $\card_{\tc{I}}(4,2) = 3$.
\item There is one transitivity path between $3$ and $1$: $3 > 2 > 1$ with $\card_I(3,2) = 1$. This gives $\card_{\tc{I}}(3,1) = 1$.
\end{itemize}

\end{example}

\begin{lemma}
\label{prop:tc-transitive}
Let $I$ be a multi inversion set, then $\tc{I}$ is \emph{transitive} in the sense of Definition \ref{def:tree-inversion-set}. Moreover, for any transitive multi inversion set $J$ such that $I \subseteq J$, we have $\tc{I} \subseteq J$.
\end{lemma}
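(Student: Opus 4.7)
The plan is to verify the two claims in sequence. For the first claim (transitivity of $\tc{I}$), I would fix $a < b < c$ with $\card_{\tc{I}}(c,b) = i > 0$ and $\card_{\tc{I}}(b,a) > 0$ and produce a transitivity path from $c$ to $a$ whose first step still has $\card_I$-value at least $i$. Concretely, pick a transitivity path $c = b_1 > b_2 > \cdots > b_k = b$ in $I$ realizing $\card_I(b_1,b_2) = i$, and any transitivity path $b = b'_1 > b'_2 > \cdots > b'_\ell = a$ in $I$. Concatenating gives a transitivity path $c = b_1 > b_2 > \cdots > b_k = b'_1 > b'_2 > \cdots > b'_\ell = a$ in $I$, whose first step is unchanged. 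By the definition of $\tc{I}$, this forces $\card_{\tc{I}}(c,a) \geq i$.

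For the second claim, let $J$ be a transitive multi inversion set with $I \subseteq J$, and fix $a < c$. I would prove the auxiliary statement that for every transitivity path $c = b_1 > b_2 > \cdots > b_k = a$ in $I$ one has $\card_J(c,a) \geq \card_I(b_1,b_2)$, by induction on the length $k$. The base case $k = 2$ is immediate from $I \subseteq J$. For the inductive step $k \geq 3$, the tail $b_2 > b_3 > \cdots > b_k = a$ is itself a transitivity path in $I$, so by induction $\card_J(b_2,a) \geq \card_I(b_2,b_3) > 0$. Also $\card_J(c,b_2) \geq \card_I(b_1,b_2) > 0$. Applying the transitivity axiom of $J$ to the triple $a < b_2 < c$ yields $\card_J(c,a) \geq \card_J(c,b_2) \geq \card_I(b_1,b_2)$. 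Taking the maximum over all transitivity paths gives $\card_J(c,a) \geq \card_{\tc{I}}(c,a)$, i.e.\ $\tc{I} \subseteq J$.

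There is essentially no real obstacle here: the only subtlety is bookkeeping the slightly unusual convention that $\card_{\tc{I}}(c,a)$ records only the $\card_I$-value of the \emph{first} step of the optimal path (not a min over the path). This is exactly what makes the concatenation argument in Part 1 go through, and it matches the role of transitivity in Definition \ref{def:tree-inversion-set}, where the strength $i$ at $(c,b)$ is propagated \emph{forward} to $(c,a)$. Once this is noted, both parts are short.
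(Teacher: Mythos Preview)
Your proof is correct and follows essentially the same approach as the paper. Part~1 is identical (concatenate the two witnessing paths); for Part~2 the paper argues by minimal counterexample on $c-a$ and trims the \emph{last} step of the path, whereas you induct on the path length and trim the \emph{first} step --- this is only a cosmetic reorganization of the same inductive idea.
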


\begin{proof}
We first prove that $\tc{I}$ is transitive. Let us suppose that there is $c > b > a$ in $\tc{I}$ such that $\card_{\tc{I}}(c,b) = v$ and $\card_{\tc{I}}(b,a) > 0$. By definition of $\tc{I}$, $\card_{\tc{I}}(c,b) = v$ implies that there is a transitivity path $c = b_1 > \dots > b_k = b$ between $c$ and $b$ with $\card{I}(b_1, b_2) = v$ and $\card_I(b_i, b_{i+1}) > 0$ for all $i$ such that $1 \leq i < k$. Likewise, the fact that $\card_{\tc{I}}(b,a) > 0$ implies that there is a transitivity path $b = b_k > b_{k+1} > \dots > b_{k'} = a$ between $b$ and $a$ with $\card_I(b_i, b_{i+1}) > 0$ for all $i$ such that $k \leq i < k'$. This makes $ c = b_1 >  \dots > b_k > \dots > b_{k'} = a$ a transitivity path between $c$ and $a$ in $I$ and so $\card_{\tc{I}}(c,a) \geq \card_I(b_1,b_2) = v$.  

Now, suppose that $J$ is a transitive multi inversion set such that $I \subseteq J$ and $\tc{I} \not\subseteq J$. So there is $c > a$ such that $\card_J(c,a) < \card_{\tc{I}}(c,a)$. We choose such an occurrence for which the distance $c - a$ is minimal. We have in particular that $\card_{\tc{I}}(c,a) = v > 0$ and so, by definition, there is a transitivity path $c = b_1 > \dots > b_k = a$ in $I$ with $\card_I(b_1, b_2) = v$ and $\card_I(b_i, b_{i+1}) > 0$ for all $1 \leq i < k$. The fact that $I \subseteq J$ implies that $\card_J(b_1, b_2) \geq v$ and $\card_J(b_i, b_{i+1}) > 0$. Now, if $k=2$, we have $\card_J(c,a) \geq v = \card_{\tc{I}}(c,a)$ which contradicts our initial assumption on the inversion $(c,a)$. So $k > 2$ and we look at the transitivity path $c = b_1 > \dots > b_{k-1}$ which is well defined. By construction, $\card_{\tc{I}}(c,b_{k-1}) \geq v$. Also the distance $c - b_{k-1} < c - a$ and so by minimality of $c - a$, we get $\card_J(c,b_{k-1}) \geq \card_{\tc{I}}(c,b_{k-1}) \geq v$ and because $I \subseteq J$, we also have $\card_J(b_{k-1}, a) > 0$. By transitivity of $J$, we then get $\card_J(c,a) \geq v$ which is again a contradiction. In conclusion, there is no such inversion $(c,a)$ and so $\tc{I} \subseteq J$.
\end{proof}

In other words, the notion of transitive closure that we have defined on multi inversion sets is a analogue of the classical transitive closure on binary relations: if $I$ is a multi inversion set, then $\tc{I}$ is the smallest transitive multi inversion set (in terms of inclusion) that contains $I$. 

\begin{remark}
\label{rem:transitive-lattice}
From Remark \ref{rem:union-tree-inversions} and Lemma \ref{prop:tc-transitive}, we get that the set of transitive multi inversion sets actually form a join semi-lattice. If $I$ and $J$ are two transitive multi inversion sets, then $I \join J = \tc{ \left( I \cup J \right) }$ is the smallest multi inversion set in terms of inclusion that is both transitive and includes $I$ and $J$. We now want to prove that the $s$-weak order is a sub-lattice. 
\end{remark}

The following two lemmas show that the planarity condition is stable under union and transitive closure. 

\begin{lemma}
\label{lem:union-planarity}
Let $I,J$ be two multi inversion sets which satisfy the planarity condition of Definition~\ref{def:tree-inversion-set}, then $K=I\cup J$ also satisfies the planarity condition.
\end{lemma}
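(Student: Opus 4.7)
The proof should be a direct unpacking of the definitions. Suppose $a < b < c$ with $\card_K(c,a) = i > 0$ (if $i=0$ the condition is vacuous). Since $\card_K(c,a) = \max(\card_I(c,a), \card_J(c,a))$, at least one of $I, J$ attains the value $i$ at $(c,a)$; by symmetry assume $\card_I(c,a) = i$.

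Now apply the planarity condition of Definition \ref{def:tree-inversion-set} to $I$ on the triple $a < b < c$. This gives two cases:
\begin{itemize}
\item If $\card_I(b,a) = s(b)$, then $\card_K(b,a) \geq \card_I(b,a) = s(b)$. Combined with $K \subseteq \maxs_s$ (which forces $\card_K(b,a) \leq s(b)$), we obtain $\card_K(b,a) = s(b)$.
\item If $\card_I(c,b) \geq i$, then $\card_K(c,b) \geq \card_I(c,b) \geq i$ directly from the definition of the union.
\end{itemize}
In either case the planarity condition for $K$ on the triple $a < b < c$ is satisfied, so $K$ is planar.

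The argument is essentially trivial because the union is defined pointwise by taking the maximum, so any inequality witnessed by one of $I$ or $J$ is preserved in $K$; there is no genuine obstacle here. The subtlety (such as it is) is only that the conclusion $\card_K(b,a) = s(b)$ is an equality rather than an inequality, but this is forced by the ambient bound $K \subseteq \maxs_s$ which is part of the definition of a multi inversion set satisfying the planarity hypothesis.
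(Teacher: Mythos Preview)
Your proof is correct and follows essentially the same argument as the paper's: pick the component (say $I$) realizing $\card_K(c,a)=i$, apply planarity in $I$, and note that both disjuncts lift to $K$ because cardinalities only increase under union. You are in fact slightly more careful than the paper in handling the equality disjunct $\card_K(b,a)=s(b)$, explicitly invoking the bound $K\subseteq\maxs_s$; the paper's proof glosses over this, but the assumption is implicit in the context where the lemma is applied (both $I$ and $J$ are $s$-tree-inversion sets, hence contained in $\maxs_s$, and so is their union).
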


\begin{proof}
Suppose that we have $a < b < c$ with $\card_K(c,a) = i$, then we have $\max \left( \card_I(c,a), \card_J(c,a) \right) = i$. We suppose without loss of generality that $\card_I(c,a) = i$. Then, either $\card_I(b,a) = s(b)$ or $\card_I(c,b) \geq i$. The cardinality of any inversion in $K$ is greater than or equal to its cardinality in $I$ and so the condition is preserved in $K$.
\end{proof}

\begin{lemma}
\label{prop:tc-planarity}
Let $K$ be a multi inversion set which satisfies the planarity condition of Definition~\ref{def:tree-inversion-set}, then $\tc{K}$ also satisfies the planarity condition. 
\end{lemma}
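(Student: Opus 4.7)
The plan is to pick an arbitrary triple $a<b<c$ with $\card_{\tc{K}}(c,a) = i>0$ (the case $i=0$ is vacuous) and show that either $\card_{\tc{K}}(b,a) = s(b)$ or $\card_{\tc{K}}(c,b) \geq i$. By definition of the transitive closure, there exists a transitivity path $c=b_1 > b_2 > \dots > b_k = a$ in $K$ achieving the maximum, i.e.\ with $\card_K(b_1,b_2)=i$ and $\card_K(b_\ell,b_{\ell+1})>0$ for all $1\leq \ell < k$. The natural case split is on where $b$ sits with respect to this path: either $b=b_j$ for some $1<j<k$, or $b$ is strictly between two consecutive entries $b_j > b > b_{j+1}$ for some $1\leq j<k$.

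If $b=b_j$ appears on the path, the prefix $c=b_1>\dots>b_j=b$ is itself a transitivity path from $c$ to $b$ whose first edge still has weight $i$, so $\card_{\tc{K}}(c,b)\geq i$ and we are done.

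If $b$ does not appear on the path, we invoke the planarity hypothesis on $K$ applied to the triple $b_{j+1} < b < b_j$ together with $\card_K(b_j,b_{j+1})>0$. This yields two subcases: either $\card_K(b,b_{j+1})=s(b)$, or $\card_K(b_j,b) \geq \card_K(b_j,b_{j+1})$. In the first subcase, concatenating the edge $b > b_{j+1}$ with the suffix $b_{j+1}>\dots>b_k=a$ (or simply using the edge $b>a$ directly when $j+1=k$) produces a transitivity path from $b$ to $a$ whose first edge has weight $s(b)$, so $\card_{\tc{K}}(b,a)\geq s(b)$, hence $=s(b)$ since cardinalities are bounded by $s(b)$. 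In the second subcase, insert $b$ between $b_j$ and $b_{j+1}$ into the path and take the prefix $c=b_1>\dots>b_j>b$; this gives a transitivity path from $c$ to $b$, and we conclude $\card_{\tc{K}}(c,b)\geq i$.

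The only delicate point, and the main obstacle, is the boundary case $j=1$ in the second subcase: the prefix collapses to the single edge $c>b$, so the first weight of the new path is $\card_K(c,b)$ rather than the original $\card_K(b_1,b_2)=i$. One must observe that the planarity inequality $\card_K(b_j,b)\geq \card_K(b_j,b_{j+1})$ specializes precisely to $\card_K(c,b)\geq i$ in that case, so the argument still goes through. With this observation handled, each branch terminates at the desired conclusion and planarity of $\tc{K}$ follows.
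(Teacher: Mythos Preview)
Your proof is correct and follows essentially the same approach as the paper: take a transitivity path realizing $\card_{\tc{K}}(c,a)$, split on whether $b$ appears on it, and when it does not, apply planarity of $K$ to the triple $b_{j+1}<b<b_j$ to either extend a prefix path to $b$ or produce a path from $b$ to $a$ with first edge of weight $s(b)$. Your explicit treatment of the boundary case $j=1$ is a detail the paper leaves implicit.
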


\begin{proof}
Let $K$ be a multi inversion set which satisfies the planarity condition. Suppose that there is $a < b < c$ such that $\card_{\tc{K}}(c,a) = v > 0$. Then, there is $c = b_1 > b_2 > \dots > b_k = a$, a transitivity path in~$K$ with $\card_K(b_1, b_2) = v$ and $\card_K(b_i, b_{i+1}) > 0$ for all $1 \leq i < k$. If there is $j$ such that $b = b_j$, then, there is a transitivity path in $K$ between $c$ and $b$ and so $\card_{\tc{K}}(c,b) \geq v$. Otherwise, there is $j$ such that $b_j > b > b_{j+1}$ and $\card_K(b_j, b_{j+1}) = v' > 0$. We know that $K$ satisfies the planarity rule. If $\card_K(b_j,b) \geq v'$, then $c = b_1 > \dots > b_j > b$ is a transitivity path in $K$ between $c$ and $b$ and we have $\card_{\tc{K}}(c,b) \geq v$. Otherwise, $\card_K(b,b_{j+1}) = s(b)$, then either $s(b) = 0$ or there is a transitivity path $b > b_{j+1} > \dots > b_k = a$ between $b$ and $a$. In both cases $\card_{\tc{K}}(b,a) = s(b)$. So $\tc{K}$ satisfies the planarity condition.
\end{proof}

\begin{proposition}
\label{prop:tc-tree-inversion}
Given a weak composition $s$, let $I$ and $J$ be two $s$-tree-inversion sets. Then $\tc{(I \cup J)}$ is the smallest $s$-tree-inversion set containing $I$ and $J$.
\end{proposition}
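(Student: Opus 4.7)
The plan is to verify that $\tc{(I \cup J)}$ satisfies the three defining conditions of an $s$-tree-inversion set---inclusion in $\maxs_s$, transitivity, and planarity---and then to deduce minimality directly from the universal property established in Lemma \ref{prop:tc-transitive}. Most of the work has already been packaged into the preceding lemmas, so the bulk of the proof amounts to assembling them.

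For transitivity, Lemma \ref{prop:tc-transitive} says precisely that $\tc{K}$ is transitive for every multi inversion set $K$. For planarity, I would first apply Lemma \ref{lem:union-planarity} to conclude that $I \cup J$ is planar (since $I$ and $J$ both are), and then apply Lemma \ref{prop:tc-planarity} to lift planarity through the transitive closure. The containments $I \subseteq I \cup J \subseteq \tc{(I \cup J)}$ and likewise for $J$ are immediate from the definitions, so $\tc{(I \cup J)}$ does contain $I$ and $J$. The only defining condition not handed to us directly by the preceding results is the bound $\tc{(I \cup J)} \subseteq \maxs_s$, and this requires a short independent check: by Definition \ref{def:tc-tree-inversions}, for every $a < c$ the value $\card_{\tc{(I \cup J)}}(c,a)$ equals $\card_{I \cup J}(b_1, b_2) = \card_{I \cup J}(c, b_2)$ for some maximising transitivity path $c = b_1 > b_2 > \dots > b_k = a$, and since $I, J \subseteq \maxs_s$ gives $I \cup J \subseteq \maxs_s$, this first-edge value is at most $s(c)$.

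For minimality, suppose $K'$ is an arbitrary $s$-tree-inversion set with $I \subseteq K'$ and $J \subseteq K'$. Then Remark \ref{rem:union-tree-inversions} gives $I \cup J \subseteq K'$, and since $K'$ is transitive, the second part of Lemma \ref{prop:tc-transitive} applied to $I \cup J \subseteq K'$ yields $\tc{(I \cup J)} \subseteq K'$. I do not anticipate a serious obstacle; the one subtlety worth flagging is the $\maxs_s$-bound above, which crucially relies on the transitive closure recording the cardinality of the \emph{first} edge $\card(b_1, b_2)$ of a maximising path (so that $b_1 = c$ and the bound $s(c)$ applies), rather than, say, an interior edge, for which no such clean bound would be available.
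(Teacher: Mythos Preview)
Your proof is correct and follows essentially the same approach as the paper: both assemble Lemmas~\ref{prop:tc-transitive}, \ref{lem:union-planarity}, and \ref{prop:tc-planarity} for transitivity and planarity, check the $\maxs_s$-bound via the first-edge observation, and extract minimality from the universal property in Lemma~\ref{prop:tc-transitive}. You are somewhat more explicit about the minimality step (invoking Remark~\ref{rem:union-tree-inversions}) and about why the $\maxs_s$-bound holds, but the logical skeleton is identical.
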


\begin{proof}
First, neither the union nor the transitive closure can increase the cardinality of a tree-inversion $(y,x)$ up to more than $s(y)$ and so $\tc{(I \cup J)}$ is still included in or equal to the maximal element $\maxs_s$ defined in Definition \ref{def:multi-inversion-set}. We set $K = I \cup J$ and we need to prove that $\tc{K}$ is the smallest $s$-tree-inversion set containing $K$.
By Lemma \ref{prop:tc-transitive}, we know that $\tc{K}$ is the smallest transitive multi inversion set containing~$K$. Lemmas~\ref{lem:union-planarity} and~\ref{prop:tc-planarity} imply that $\tc{K}$ satisfies the planarity condition.
\end{proof}

This leads us to the main result of this Section.

\begin{theorem}
\label{thm:lattice}
Given a weak composition $s$, the $s$-weak order on $s$-decreasing trees is a lattice. The join of two $s$-decreasing trees $T$ and $R$ is given by
\begin{equation}
\inv(T \join R) = \tc{ \left( \inv(T) \cup \inv(R) \right) }.
\end{equation}
\end{theorem}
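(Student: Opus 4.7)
The plan is to reduce the lattice property for $s$-decreasing trees to a statement about $s$-tree-inversion sets under multiset inclusion, using the bijection of Proposition~\ref{prop:tree-inversions-bij}. By that bijection, the poset of $s$-decreasing trees under $\wole$ is order-isomorphic to the poset of $s$-tree-inversion sets under $\subseteq$, so it suffices to exhibit joins (and meets) in the latter.

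For the join, I would take two $s$-decreasing trees $T$ and $R$ and set $I = \inv(T)$, $J = \inv(R)$. The first step is the observation that a third tree $S$ satisfies both $T \wole S$ and $R \wole S$ if and only if $I \cup J \subseteq \inv(S)$, which is immediate from Remark~\ref{rem:union-tree-inversions}. Because $\inv(S)$ is an $s$-tree-inversion set, and in particular is transitive and planar, Proposition~\ref{prop:tc-tree-inversion} upgrades this to the equivalent condition $\tc{(I \cup J)} \subseteq \inv(S)$. That same proposition supplies the crucial fact that $\tc{(I \cup J)}$ is itself an $s$-tree-inversion set, so the bijection produces a unique $s$-decreasing tree whose inversion set is $\tc{(I \cup J)}$. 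This tree satisfies the universal property of the join in $\wole$, giving both existence and the claimed formula for $\inv(T \join R)$.

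For the meet, the cleanest route is to invoke the order-reversing involution $\phi$ of Proposition~\ref{prop:symmetry}: since $\phi$ is a bijection from the $s$-weak order to itself that reverses $\wole$, it carries joins to meets. Concretely, setting $T \meet R := \phi\bigl(\phi(T) \join \phi(R)\bigr)$ is well-defined by the first part, and satisfies the universal property of the meet. (Equivalently, one could note that the poset is finite with a unique minimum $\emptyset$ and unique maximum $\maxs_s$, so the existence of all joins formally implies the existence of all meets.)

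The main technical content of the theorem sits in Proposition~\ref{prop:tc-tree-inversion}, together with its supporting Lemmas~\ref{lem:union-planarity} and~\ref{prop:tc-planarity}, all already established above. Once those are in hand, the remaining steps are essentially bookkeeping through the bijection with $s$-tree-inversion sets and an appeal to the horizontal-flip symmetry, so I do not anticipate further obstacles.
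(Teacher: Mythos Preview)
Your proposal is correct and follows essentially the same approach as the paper: both establish the join via Proposition~\ref{prop:tc-tree-inversion} (together with the bijection of Proposition~\ref{prop:tree-inversions-bij}) and then derive the meet from the horizontal-flip symmetry of Proposition~\ref{prop:symmetry}. The only cosmetic difference is that the paper phrases the join step as showing the $s$-weak order is a join-sublattice of the join-semilattice of transitive multi inversion sets (Remark~\ref{rem:transitive-lattice}), whereas you unpack the universal property directly; the content is the same.
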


\begin{proof}
From Remark~\ref{rem:transitive-lattice} and Proposition~\ref{prop:tc-tree-inversion}, we get that the $s$-weak order is a join-sublattice of the join semilattice of transitive multi inversion sets. By Proposition~\ref{prop:symmetry}, it is also symmetric and so it is meet-semilattice, hence a lattice.
\end{proof}

\begin{example}
Figure~\ref{fig:join} gives an example of computing the join of two $s$-decreasing trees with $s = (0,2,2)$. In this case, if $K = \inv(T) \cup \inv(R)$, we obtain 

\begin{tabular}{ll}
$\card_K(3,2) = 2$ & $\card_K(3,1) = 0$ \\
                   & $\card_K(2,1) = 1$ 
\end{tabular}

by taking the maximum of the cardinalities in $T$ and $R$. The multiset $K$ is not transitive. Indeed, we have $3 > 2 > 1$ with $\card_K(3,2) = 2$ and $\card_K(2,1) = 1$ but $\card_K(3,1) = 0 < 2$. By taking the transitive closure, we get $\card_{T \join R}(3,1) = 2$. You can check on Figure~\ref{fig:sweak_lattice} that the tree $T \join R$ is indeed the join of $T$ and $R$ in the lattice.

\end{example}

\begin{figure}[ht]
\begin{tabular}{ccc}
$T$ & $\begin{aligned}\scalebox{.8}{{ \newcommand{\nodea}{\node[draw,circle] (a) {$3$}
;}\newcommand{\nodeb}{\node[draw,circle] (b) {$1$}
;}\newcommand{\nodec}{\node[draw,circle] (c) {$ $}
;}\newcommand{\noded}{\node[draw,circle] (d) {$ $}
;}\newcommand{\nodee}{\node[draw,circle] (e) {$2$}
;}\newcommand{\nodef}{\node[draw,circle] (f) {$ $}
;}\newcommand{\nodeg}{\node[draw,circle] (g) {$ $}
;}\newcommand{\nodeh}{\node[draw,circle] (h) {$ $}
;}\begin{tikzpicture}[auto]
\matrix[column sep=.3cm, row sep=.3cm,ampersand replacement=\&]{
         \& \nodea  \&         \&         \&         \\ 
 \nodeb  \& \noded  \&         \& \nodee  \&         \\ 
 \nodec  \&         \& \nodef  \& \nodeg  \& \nodeh  \\
};

\path[ultra thick, red] (b) edge (c)
	(e) edge (f) edge (g) edge (h)
	(a) edge (b) edge (d) edge (e);
\end{tikzpicture}}}\end{aligned}$ &
\begin{tabular}{ll}
$\card_T(3,2) = 2$ & $\card_T(3,1) = 0$ \\
                   & $\card_T(2,1) = 0$ 
\end{tabular} \\ \hline
$R$ & $\begin{aligned}\scalebox{.8}{{ \newcommand{\nodea}{\node[draw,circle] (a) {$3$}
;}\newcommand{\nodeb}{\node[draw,circle] (b) {$2$}
;}\newcommand{\nodec}{\node[draw,circle] (c) {$ $}
;}\newcommand{\noded}{\node[draw,circle] (d) {$1$}
;}\newcommand{\nodee}{\node[draw,circle] (e) {$ $}
;}\newcommand{\nodef}{\node[draw,circle] (f) {$ $}
;}\newcommand{\nodeg}{\node[draw,circle] (g) {$ $}
;}\newcommand{\nodeh}{\node[draw,circle] (h) {$ $}
;}\begin{tikzpicture}[auto]
\matrix[column sep=.3cm, row sep=.3cm,ampersand replacement=\&]{
         \&         \&         \& \nodea  \&         \\ 
         \& \nodeb  \&         \& \nodeg  \& \nodeh  \\ 
 \nodec  \& \noded  \& \nodef  \&         \&         \\ 
         \& \nodee  \&         \&         \&         \\
};

\path[ultra thick, red] (d) edge (e)
	(b) edge (c) edge (d) edge (f)
	(a) edge (b) edge (g) edge (h);
\end{tikzpicture}}}\end{aligned}$ &
\begin{tabular}{ll}
$\card_R(3,2) = 0$ & $\card_R(3,1) = 0$ \\
                   & $\card_R(2,1) = 1$ 
\end{tabular} \\ \hline
$T \join R$ & $\begin{aligned}\scalebox{.8}{{ \newcommand{\nodea}{\node[draw,circle] (a) {$3$}
;}\newcommand{\nodeb}{\node[draw,circle] (b) {$ $}
;}\newcommand{\nodec}{\node[draw,circle] (c) {$ $}
;}\newcommand{\noded}{\node[draw,circle] (d) {$2$}
;}\newcommand{\nodee}{\node[draw,circle] (e) {$ $}
;}\newcommand{\nodef}{\node[draw,circle] (f) {$1$}
;}\newcommand{\nodeg}{\node[draw,circle] (g) {$ $}
;}\newcommand{\nodeh}{\node[draw,circle] (h) {$ $}
;}\begin{tikzpicture}[auto]
\matrix[column sep=.3cm, row sep=.3cm,ampersand replacement=\&]{
         \& \nodea  \&         \&         \&         \\ 
 \nodeb  \& \nodec  \&         \& \noded  \&         \\ 
         \&         \& \nodee  \& \nodef  \& \nodeh  \\ 
         \&         \&         \& \nodeg  \&         \\
};

\path[ultra thick, red] (f) edge (g)
	(d) edge (e) edge (f) edge (h)
	(a) edge (b) edge (c) edge (d);
\end{tikzpicture}}}\end{aligned}$ &
\begin{tabular}{ll}
$\card_{T \join R}(3,2) = 2$ & $\card_{T \join R}(3,1) = 2$ \\
                   & $\card_{T \join R}(2,1) = 1$ 
\end{tabular} \\ 
\end{tabular}
\caption{The computation of $T \join R$ in the $s$-weak lattice}
\label{fig:join}
\end{figure}
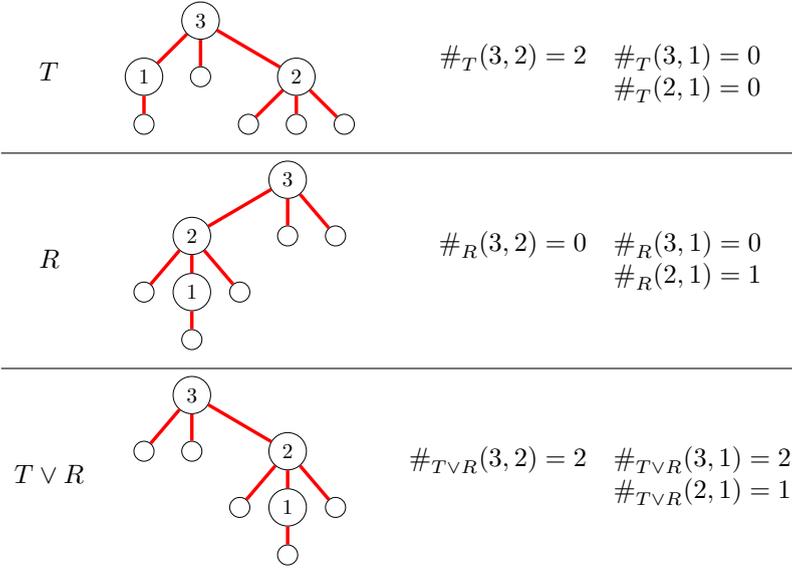

\begin{remark}
\label{rem:meet}
We could also prove directly that the $s$-weak order is a meet semilattice. By symmetry, we just define the \defn{tree-versions} of an $s$-decreasing tree $T$ by $\card_T(x,y) = s(y) - \card_T(y,x)$ for all $x < y$ in~$T$. This gives a multi set of \emph{versions} which follows some very similar rules as the multi set of \emph{inversions}. Then the definition and proofs used for the join semi-lattice also apply and we can define the meet of two $s$-decreasing trees by making the transitive closure of the union of their tree-version sets.

This is also equivalent to taking the mirror image $\tilde{T}$ of each tree $T$, where $\card_{\tilde{T}}(b,a) = s(b) - \card_T(b,a)$. Then $T \meet R$ is the mirror image of $\tilde{T} \join \tilde{R}$
\end{remark}

\subsection{Cover relations}
The goal of this section is to describe the cover relations of the $s$-weak-order. We show that they correspond to a certain class of rotations on trees that we call \emph{$s$-tree rotations}. Such rotations can be performed along \emph{tree-ascents}, a notion that we now introduce in terms of trees and inversion sets, and which generalizes the classical notion of ascents of a permutation.

\begin{definition}
\label{def:tree-ascent}
Let $T$ be an $s$-decreasing tree of some weak composition $s$ of length $n$. We say that~$(a,c)$ with $a < c$ is a \defn{tree-ascent} of $T$ if
\begin{enumerate}[(i)]
\item $a$ is a descendant of $c$;
\label{cond:tree-ascent-desc}
\item $a$ does not belong to the right child of $c$;
\label{cond:tree-ascent-non-final}
\item if $a$ is a descendant of $b$ with $c > b > a$, then $a$ belongs to the right descendant of $b$;
\label{cond:tree-ascent-middle}
\item if it exists, the strict right child of $a$ is empty. 
\label{cond:tree-ascent-smaller}
\end{enumerate}
\end{definition}

The above definition can be visually interpreted from the tree: 
basically, $a$ is located at the \emph{right end} of a non right subtree of $c$, with the exception $s(a)=0$, when $a$ is allowed to have more children on its unique leg. 
Figure~\ref{fig:s_tree_ascents} shows a schematic illustration of these two possible occurrences of an $(a,c)$ ascent. 

\begin{figure}[htbp]
\begin{center}
\includegraphics[width=0.5\textwidth]{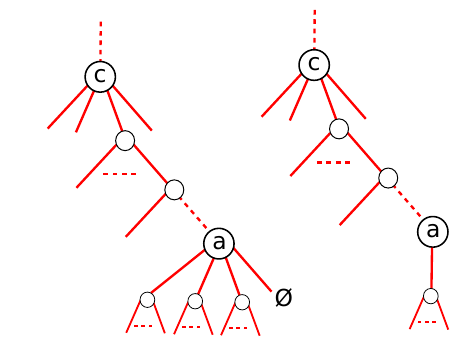}
\caption{Schematic illustration of two $s$-trees containing an ascent $(a,c)$.}
\label{fig:s_tree_ascents}
\end{center}
\end{figure}

\begin{remark}
\label{rem:tree-ascent-a}
For each value $a$, there is at most one tree-ascent $(a,c)$. In other words, the smaller value $a$ of a tree-ascent $(a,c)$ entirely characterizes the tree-ascent. More precisely, let $a$ be a node such that its strict right child is empty. Now let $c$ be the ascendant of $a$ of minimal value such that $\card(c,a) < s(c)$, if it exists. It is direct by Definition~\ref{def:tree-ascent} that $(a,c)$ is then the only possible tree-ascent $(a,*)$.
\end{remark}

\begin{example}
The tree-ascents of the tree on Figure~\ref{fig:ex-dtree} are: $(4,5)$, $(2,5)$, and $(1,5)$. Note that $(3,5)$ is not a tree-ascent because the right-most child of $3$ is not empty. On the other hand, $(2,5)$ is a tree ascent because even though its unique child is not empty, it is not a strict right child as $s(2) = 0$.
\end{example}

\begin{proposition}
\label{prop:tree-ascent-inversions}
A couple $(a,c)$ is a tree-ascent if and only it satisfies the following statements
\begin{enumerate}[(i)]
\item for all $d$ such that $c < d \leq n$, we have $\card(d,c) = \card(d,a)$;
\label{cond:inv-tree-ascent-desc}
\item $\card(c,a) < s(c)$;
\label{cond:inv-tree-ascent-non-final}
\item for all $b$ such that  $a < b < c$, then $\card(c,b) = \card(c,a)$ implies that $\card(b,a) = s(b)$;
\label{cond:inv-tree-ascent-middle}
\item if $s(a) > 0$, for all $a' < a$, then $\card(a,a') = s(a)$ implies that $\card(c,a') > \card(c,a)$.
\label{cond:inv-tree-ascent-smaller}
\end{enumerate}
\end{proposition}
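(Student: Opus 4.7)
The plan is to prove the four equivalences $(i)\Leftrightarrow(i')$, $(ii)\Leftrightarrow(ii')$, $(iii)\Leftrightarrow(iii')$, and $(iv)\Leftrightarrow(iv')$ one by one, by translating the geometric position of $a$ (relative to $c$, to intermediate nodes $b$, to larger nodes $d$, and to its own descendants $a'$) into the language of cardinalities via Definition~\ref{def:tree-inversions} and Lemma~\ref{lem:tree-inversion-trivial}.

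For $(i)\Leftrightarrow(i')$, the forward direction is immediate: any descendant of $c$ lies in the same subtree (or on the same side) of any $d>c$ as $c$ does. For the converse, I consider the lowest common ancestor $z$ of $a$ and $c$; if $z>c$ then $a$ and $c$ lie in different children of $z$, and a short case analysis on whether these children have index $0$, $s(z)$, or an intermediate value produces $\card(z,a)\neq\card(z,c)$, contradicting $(i')$. Hence $z=c$, giving $(i)$. Equivalence $(ii)\Leftrightarrow(ii')$ is then immediate: once $a$ is a descendant of $c$, the cardinality $\card(c,a)$ equals the index $i$ of the child $T_i^c$ containing $a$, and the condition $a\notin T_{s(c)}^c$ is just $i<s(c)$.

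The heart of the proof is $(iii)\Leftrightarrow(iii')$. Set $i=\card(c,a)$. Given $a<b<c$ with $\card(c,b)=i$, we must have either $b\in T_i^c$, or $i=0$ and $b$ strictly left of $c$. If $b\in T_i^c$ is an ancestor of $a$, condition $(iii)$ directly places $a$ in $T_{s(b)}^b$, yielding $\card(b,a)=s(b)$. If $b\in T_i^c$ is not an ancestor of $a$, I iterate $(iii)$ from the root of $T_i^c$ down to $a$ to see that this path always proceeds through rightmost children; the lowest common ancestor $z$ of $b$ and $a$ then sits on this path with $a$ in its rightmost child and $b$ in a strictly smaller-indexed child, so $a$ is strongly right of $b$ and $\card(b,a)=s(b)$. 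If instead $b$ is strictly left of $c$ (hence $i=0$), the lowest common ancestor $D$ of $b$ and $c$ sits strictly above $c$, and $a$ being in the same child of $D$ as $c$ puts $a$ in a strictly higher-indexed child of $D$ than $b$, so again $a$ is right of $b$ and $\card(b,a)=s(b)$. Conversely, if $a$ is a descendant of $b$ with $a<b<c$, then $b$ lies on the path from $c$ to $a$, so $b\in T_i^c$ and $\card(c,b)=\card(c,a)$; hypothesis $(iii')$ then gives $\card(b,a)=s(b)$, forcing $a\in T_{s(b)}^b$.

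For $(iv)\Leftrightarrow(iv')$, suppose $s(a)>0$ and $\card(a,a')=s(a)$ for some $a'<a$; emptiness of $T_{s(a)}^a$ forces $a'$ to be strongly right of $a$. Letting $z$ be the lowest common ancestor of $a$ and $a'$, the rightward path from the root of $T_i^c$ down to $a$ (again extracted from $(iii)$) rules out $z<c$; so either $z=c$ with $a'$ in a higher-indexed child of $c$ than $a$, or $z>c$ with $a'$ strictly right of $c$, and in both cases $\card(c,a')>\card(c,a)$. Conversely, if $T_{s(a)}^a$ contained some $a'$, then $a'$ would be a descendant of $a$ and hence of $c$, lying in the same child $T_i^c$ as $a$, so $\card(c,a')=\card(c,a)$, contradicting $(iv')$. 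The main obstacle throughout is the incomparable subcase of $(iii)\Rightarrow(iii')$, where I must leverage the iterated right-going structure forced by $(iii)$ to locate $a$ strongly right of $b$ rather than merely incomparable with it.
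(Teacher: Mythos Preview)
Your proof is correct and covers all cases carefully. The overall organization---matching condition $(k)$ of Definition~\ref{def:tree-ascent} with condition $(k)$ of the Proposition and arguing each pair under the presence of the others---is the same as the paper's. The notable difference lies in how you handle the forward direction of $(iii)$ and $(iv)$.

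For $(iii)\Rightarrow(iii')$, the paper breaks into the three relative positions of $b$ with respect to $a$ (left of, ancestor of, right of) and disposes of the ``$b$ right of $a$'' case by contradiction using the \emph{planarity} axiom on $\inv(T)$. You instead argue geometrically: you observe that iterating Definition~\ref{def:tree-ascent}\eqref{cond:tree-ascent-middle} forces the path from the root of $T_i^c$ down to $a$ to proceed through rightmost children, and then locate the lowest common ancestor of $a$ and $b$ on that path to conclude $a$ is strictly right of~$b$. The same ``rightward path'' idea drives your $(iv)\Rightarrow(iv')$, whereas the paper again invokes planarity (on $c'>c>a'$ or on $d>c>b$) to reach the desired inequality. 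Your approach is more hands-on with the tree structure and avoids appealing to the abstract axioms of Definition~\ref{def:tree-inversion-set}; the paper's approach is shorter once one is willing to use planarity as a black box. Both are valid, and your explicit identification of the rightmost-children path is a nice structural observation that the paper leaves implicit.
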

\begin{proof}
Each condition of the proposition is a translation of the corresponding condition in Definition~\ref{def:tree-ascent}. First, suppose that $(a,c)$ is a tree-ascent in the sense of Definition~\ref{def:tree-ascent} and let us prove that the conditions of the proposition are satisfied.
\begin{enumerate}[(i)]
\item $a$ is a descendant of $c$, so in particular, for any $d > c$, we have $a$ and $c$ in the same subtree which implies the condition.
\item $a$ is not weakly right of $T_{s(c)}^c$: it does not belong to $T_{s(c)}^c$ and cannot be right of $c$ because it is a descendant of $c$. Then by definition of tree-inversions, $\card(c,a) < s(c)$.
\item Let us suppose that there is $a < b < c$ with $\card(c,b) = \card(c,a)$. If $b$ is left of $a$, then the condition is satisfied. If $a$ is a descendant of $b$, it belongs to its right child and again $\card(b,a) = s(b)$. The only case left is when $b$ is right of $a$. This means that $a$ and $b$ have a common ancestor $c'$ with $\card(c',a) < \card(c',b)$. In particular, $c' \neq c$. If $c' > c > b$, by planarity we have either $\card(c',c) \geq \card(c',b)$ or $\card(c,b) = s(c)$. Because $a$ is descendant of $c$, $\card(c',a) = \card(c',c)$ which contradicts the first case. The second case also leads to a contradiction because we have by hypothesis that $\card(c,b) = \card(c,a)$ and we have seen already $\card(c,a) < s(c)$. So  $b$ cannot be right of $a$ and the condition is satisfied.
\item We suppose $s(a) > 0$ and take $a' < a$ such that $\card(a,a') = s(a)$. This means that $a'$ is either in the right child of $a$ or right of $a$. We have $s(a) > 0$ so the right child of $a$ is a \emph{strict right child} and by hypothesis, it is empty. So there exists $c' > a$ with $\card(c',a) < \card(c',a')$. If $c' = c$, the condition is satisfied. If $c' > c$, we use the planarity condition on $c' > c > a'$. Because $a$ is a descendant of $c$, we know $\card(c',a) = \card(c',c)$ which forbids $\card(c',c) \geq \card(c',a)$. Then  $\card(c,a') = s(c)$ which satisfy the condition. If $c' < c$, then we have $a$ a descendant of $c'$ with $c > c' > a$ which implies that $a$ belongs to the right child of $c'$ and contradicts our previous statement. 
\end{enumerate}

To prove the reciprocity, we now suppose that $(a,c)$ satisfies all conditions of the proposition and we prove that it is indeed a tree-ascent.
\begin{enumerate}[(i)]
\item Condition~\ref{cond:tree-ascent-desc} of the definition implies that $a$ cannot be either left or right of $c$: it is then a descendant of $c$.
\item is true by definition of tree-inversions.
\item Suppose that we have $a$ a descendant of $b$ with $a < b < c$. Then $b$ cannot be either left or right of $c$, it is then a descendant of $c$ and moreover, $a$ and $b$ are in the same subtree of $c$. This implies $\card(c,a) = \card(c,b)$. Condition~\ref{cond:tree-ascent-middle} then tells us $\card(b,a) = s(b)$ which forces $a$ to be in the right subtree of $b$.
\item Let us suppose that there exists $a'$ in the strict right child of $a$. This means that $s(a) > 0$ (otherwise, it does not have a strict right child) and $\card(a,a') = s(a)$. Condition~\ref{cond:tree-ascent-smaller} then implies that $\card(c,a') > \card(c,a)$ which forbids $a'$ to be a descendant of $a$ (it is not in the same subtree of $c$) and leads to a contradiction.
\end{enumerate}
\end{proof}

\begin{definition}
Let $I$ be a multi inversion set. We say that we \defn{add} the inversion $(y,x)$ to $I$ and we write $I + (y,x)$ if we increase $\card(y,x)$ by one.
\end{definition}

\begin{lemma}
\label{lem_tree_rotation_inversions}
Let $T$ be an $s$-decreasing tree for some weak composition $s$, and $(a,c)$ a tree-ascent of~$T$. Then $\tc{\left( \inv(T) + (c,a) \right)}$ is a tree-inversion set. The inversions of its corresponding $s$-tree $T'$ are
\begin{equation}
\label{eq_tree_rotation_inversions}
\card_{T'}(c',a') =
\left\{
	\begin{array}{ll}
		\card_{T}(c',a')+1  & \mbox{if } c'=c \text{ and } a' \text{ is equal to } a \text{ or a non-left descendant of } a,\\
		\card_{T}(c',a') & \mbox{otherwise.} 
	\end{array}
\right.
\end{equation}
\end{lemma}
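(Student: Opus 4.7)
The plan is to verify that $\tc{K}$ is a valid $s$-tree-inversion set, where $K := \inv(T) + (c,a)$, and then to describe $\tc{K}$ explicitly by tracking which transitivity paths the new inversion opens. Throughout, set $v := \card_T(c,a)$, so that $\card_K(c,a) = v+1$; condition (ii) of Proposition~\ref{prop:tree-ascent-inversions} gives $v+1 \leq s(c)$, hence $K \subseteq \maxs_s$.

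For the planarity of $K$, only triples involving the modified pair need re-inspection. Two of the three sub-cases become \emph{easier} after raising $\card(c,a)$ (either $\card(c,a)$ appears on the satisfying side of planarity, or its role as driving cardinality lies in a triple with largest element $c''>c$ whose planarity in $\inv(T)$ already forces $\card_T(c'',c) \geq \card_T(c'',a)$). The only non-trivial case is the triple $a<b<c$ with driving cardinality $v+1$: if $\card_T(b,a) = s(b)$ we are done; otherwise planarity of $\inv(T)$ forces $\card_T(c,b) \geq v$, while the contrapositive of condition~(iii) of Proposition~\ref{prop:tree-ascent-inversions} rules out equality, giving $\card_T(c,b) \geq v+1$ and restoring planarity. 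Lemmas~\ref{prop:tc-transitive} and~\ref{prop:tc-planarity} then yield that $\tc{K}$ is transitive and planar, while the bound $\tc{K} \subseteq \maxs_s$ follows immediately because every transitivity path starting at $c'$ has first-edge cardinality at most $s(c')$.

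To extract formula~\eqref{eq_tree_rotation_inversions}, I would observe that any transitivity path of $K$ producing a cardinality strictly larger than its $\inv(T)$-counterpart must use the new edge $(c,a)$ as its \emph{first} step: paths whose first edge belongs to $\inv(T)$ contribute nothing new by transitivity of $\inv(T)$. Consequently increments occur only at pairs $(c,a')$ admitting a transitivity path $c > a > b_3 > \dots > b_k = a'$ in $K$, which by transitivity of $\inv(T)$ happens exactly when $a'=a$ or $(a,a') \in \inv(T)$. Lemma~\ref{lem:tree-inversion-trivial} then describes the latter as ``$a'$ weakly right of $T_1^a$'', splitting into two sub-populations: non-left descendants of $a$, which lie in the same subtree $T_v^c$ as $a$ and therefore get lifted from $v$ to $v+1$; and nodes strictly right of $a$ in $T$. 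The second sub-population is the delicate one: condition~(iv) of Proposition~\ref{prop:tree-ascent-inversions} ensures $\card_T(c,a') > v$ for every such $a'$ (the case $s(a)=0$ producing none), so the candidate value $v+1$ coming through the path is not a strict improvement. Assembling the cases gives~\eqref{eq_tree_rotation_inversions}, and Proposition~\ref{prop:tree-inversions-bij} identifies $\tc{K}$ with the tree-inversion set of a unique $s$-decreasing tree $T'$.

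The main obstacle I anticipate is the clean separation between ``non-left descendants of $a$'' and ``nodes strictly right of $a$'' in the last step. Condition~(iv) of the tree-ascent definition is precisely tailored to render the second population inert under the new transitive closure; without it, extra cardinality increases would appear, the bound $\leq s(c)$ could be jeopardised, and the compact combinatorial description of the rotation would break down.
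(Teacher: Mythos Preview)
Your argument is essentially correct and takes a route that differs from the paper's in the second half. Both proofs share Part~1 (planarity of $K$ via condition~(iii), then Lemmas~\ref{prop:tc-transitive} and~\ref{prop:tc-planarity}). For Part~2, the paper \emph{defines} $T'$ by formula~\eqref{eq_tree_rotation_inversions} and then verifies directly, by a two-case analysis, that this multi-inversion set is transitive, contains $K$, and is minimal among transitive multisets containing $K$ --- a ``guess and check'' approach. You instead compute $\tc{K}$ constructively by analysing which transitivity paths in $K$ can beat the $\inv(T)$-value at a given pair. Your approach explains \emph{why} exactly the pairs $(c,a')$ with $a'$ a non-left descendant of $a$ are affected, and makes the roles of conditions~(iii) and~(iv) transparent; the paper's approach is more mechanical but sidesteps the path bookkeeping.

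There is one small gap. When you assert that ``paths whose first edge belongs to $\inv(T)$ contribute nothing new by transitivity of $\inv(T)$'', this implicitly assumes the \emph{entire} path lies in $\inv(T)$. But if $v=\card_T(c,a)=0$, a path $c'=b_1>\dots>c>a>\dots>a'$ with $c'>c$ may use the new edge $(c,a)$ internally while its first edge is old; such a path is not a transitivity path in $\inv(T)$, so transitivity of $\inv(T)$ alone does not dispose of it. The fix is condition~(i) of Proposition~\ref{prop:tree-ascent-inversions}: it gives $\card_T(c',c)=\card_T(c',a)$, and then transitivity along the tail $a>\dots>a'$ (all of whose edges do lie in $\inv(T)$) yields $\card_T(c',a')\geq\card_T(c',a)=\card_T(c',c)\geq\card_T(b_1,b_2)$, so indeed no improvement occurs. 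With this patch your argument is complete.
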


\begin{proof}
Let $(a,c)$ be a tree-ascent of $T$ and $K  = \inv(T) + (c,a)$. We will prove the result in two parts.

\begin{proofpart}[$\tc{K}$ is a tree-inversion set]
By Lemma~\ref{prop:tc-transitive} and Lemma~\ref{prop:tc-planarity}, it suffices to show that $K$ satisfies the planarity condition. 

Let $x < y < z$ be such that $\card_K(z,x) = i$. If $(z,x) \neq (c,a)$, then $\card_T(z,x) = \card_K(z,x)$ and by the planarity of $T$, we have $\card_T(z,y) \geq i$ or $\card_T(y,x) = s(y)$. Whichever condition is satisfied it would also be true in $K$ because the cardinality of an inversion can only increase. 

The remaining case to consider is then $a < b < c$ and $\card_K(c,a) = i$. By definition of $K$, we have $\card_T(c,a) = i-1$. By planarity, we have  $\card_T(c,b) \geq i-1$ or $\card_T(b,a) = s(b)$. If $\card_T(b,a) = s(b)$, we also have $\card_K(b,a) = s(b)$ and the planarity condition is satisfied. If $\card_T(c,b) > i-1$, then we have $\card_K(c,b) = \card_T(c,b) \geq i$ and the planarity condition is satisfied. There is left to check the case where $\card_T(c,b) = i - 1$. But then we have $\card_T(c,b) = \card_T(c,a)$ and by Condition~\eqref{cond:tree-ascent-middle} of the tree-ascent $(a,c)$ we have $\card_T(b,a) = s(b)$ and so also $\card_K(b,a) = s(b)$ which then satisfies the planarity condition.
\end{proofpart}

\begin{proofpart}[Equation~\eqref{eq_tree_rotation_inversions} defines $\tc{K}$, the smallest transitive multi inversion set containing $K$]
We start by proving that the multi inversion set defined by Equation~\eqref{eq_tree_rotation_inversions} is transitive.

Let $x<y<z$ and assume that $\card_{T'}(z,y)=i$. We need to show that 
\[
\card_{T'}(y,x)=0 \quad \text{or} \quad \card_{T'}(z,x)\geq i.
\]

\begin{proofcase}
{\bf $\mathbf{(z,y)=(c,a')}$ for some $a' = a$ or $a'$ a non-left descendant of $a$ in $T$.}
In this case $\card_{T'}(z,y)= \card_{T}(z,y)+1=i$, which implies $\card_{T}(y,x)=0$ or $\card_{T}(z,x)\geq i-1$ by transitivity of $T$.
If $\card_{T}(y,x)=0$ then $\card_{T'}(y,x)=0$ because $y\neq c$ and we are done. 
If $\card_{T}(z,x)> i-1$ then $\card_{T'}(z,x)\geq i$ and we are also done.
Now assume $\card_{T}(z,x)= i-1$ and $\card_{T}(y,x)>0$. If $y = a' = a$, this means $\card_T{(a,x)} >0$. If $y = a' \neq a$, by transitivity of $T$ for $x<y<a$, and given that $a'$ is a non-left descendant of $a$ in $T$, we get the following two inequalities:
\[
\card_T(a,x) \geq \card_T(a,y) = \card_T(a,a') > 0.
\]
So whether $a' = a$ or not, we obtain $\card_T(a,x)>0$. From this, we prove that $x$ is necessarily a non-left descendant of~$a$. Indeed, $x$ cannot be right of~$a$. Remember that $(a,c)$ is a tree-ascent which by Cond.~\ref{cond:tree-ascent-middle} of Def.~\ref{def:tree-ascent} means it belongs to the right descendant of any node $b$ between $a$ and $c$. Then $x$ being right of $a$ means $\card_T(z,x)=\card_T(c,x)>\card_T(c,a)=i-1$ which contradicts our assumption.
Therefore $\card_{T'}(z,x)=\card_{T}(z,x)+1=i-1+1=i\geq i$ as desired.
\end{proofcase}

\begin{proofcase}
{\bf $\mathbf{(z,y) \neq (c,a)}$ and $\mathbf{(z,y) \neq (c,a')}$ for any non-left descendant $a'$ of $a$ in $T$.}
In this case \mbox{$\card_{T'}(z,y)= \card_{T}(z,y)=i$}, which implies that $\card_{T}(y,x)=0$ or $\card_{T}(z,x)\geq i$ by transitivity of $T$.
If $\card_{T}(z,x)\geq i$ then $\card_{T'}(z,x)\geq i$ and we are done. If $\card_T(y,x)=0$ and $\card_{T'}(y,x)=0$ we are also done. Now assume $\card_T(y,x)=0$ and $\card_{T'}(y,x)>0$. This happens exactly when $(y,x)=(c,a')$ for some $a' = a$ or $a'$ a non-left descendant of $a$ in $T$. Since $z>y$ and $x$ is a descendant of $y$ then $\card_T(z,x)=\card_T(z,y)=i$, and therefore $\card_{T'}(z,x)\geq i$ holds.
\end{proofcase}

This finishes our proof that the multi inversion set defined by Equation~\eqref{eq_tree_rotation_inversions} is transitive. 
It clearly contains $K$, and is the smallest transitive multi inversion set containing it because of the transitivity condition. We conclude that it is equal to $\tc{K}$ by Lemma~\ref{prop:tc-transitive}. 
\end{proofpart}
The Lemma follows from the assertions in Part 1 and Part 2.
\end{proof}

\begin{definition}
If $(a,c)$ is a tree-ascent of an $s$-decreasing tree $T$, we call the $s$-decreasing tree corresponding to $\tc{\left( \inv(T) + (c,a) \right)}$ an \defn{$s$-tree rotation} of $T$.
\end{definition}

an $s$-tree rotation can be seen directly on the tree: it corresponds to moving a node to the right in a way that we make more precise now. When rotating the tree-ascent $(a,c)$, $a$ moves to the right along $c$ (indeed, we increase $\card(c,a)$). The node $a$ is then inserted on the next subtree of $c$, along the left-most path: its exact position depends on the relative value of $a$ compared with the other nodes on the path. A node can move only if its strict right child is empty (following the definition of tree-ascents) and it moves taking along all its middle children. Indeed, if $a'$ is a middle descendant of $a$, then $\card(a,a') > 0$ and so the cardinality of $(c,a')$ in $\tc{\left( \inv(T) + (c,a) \right)}$ is increased by transitivity. Note however that if $a'$ is a left descendant of $a$, it does not move along with $a$. Indeed, in this case $\card(a,a') = 0$ and so it is not increased by transitivity when $\card(c,a)$ increases. 

\begin{example}
We show all $6$ possible $s$-tree rotations of a tree on Figure~\ref{fig:tree_rotation}. To each tree-ascent $(a,c)$, corresponds an $s$-tree rotation. We color the node $a$ of the tree-ascent in red to show which tree-rotation we are performing. 

When rotating the tree-ascent $(5,8)$, the node $5$ is moved to the second subtree of $8$. Before the rotation, the left-most path of this second subtree was $7 - 6 -3$. As $5$ is smaller than $7$ and $6$ and we necessarily have $\card(7,5) = \card(6,5) = 0$, it becomes a left descendant of $7$ and $6$. As $5 > 3$, it becomes a parent of $3$. We necessarily have $\card(5,3) = s(5) = 1$ and so $3$ becomes the right child of $5$. This can be observed on all other rotations. Note in particular that after rotating a tree ascent $(a,c)$, the strict left child of $a$ is always empty. 

When rotating the tree-ascent $(7,8)$, you can observe that the nodes $1$, $2$, and $4$, which are in the middle subtree of $7$ are moved along. Indeed, we have $\card(7,4) = \card(7,2) = \card(7,1) = 1$. After the rotation, we have $\card(8,7) = 2$ and so we obtain by transitivity $\card(8,4) = \card(8,2) = \card(8,1) = 1$.
\end{example}

\begin{figure}[htbp]
\input{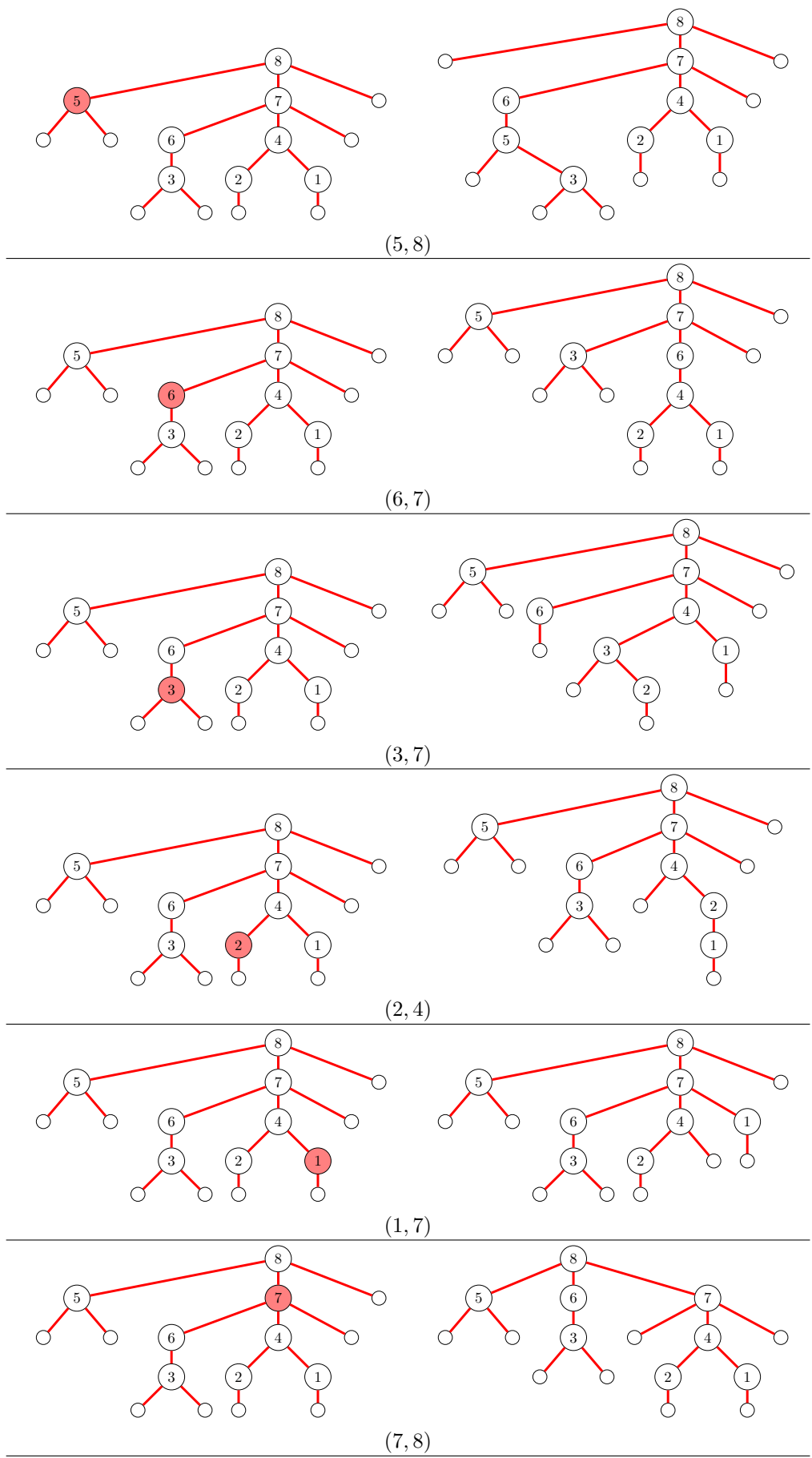}
\caption{Examples of $s$-tree rotations}
\label{fig:tree_rotation}
\end{figure}

The main result of this section is given by the following theorem.

\begin{theorem}
\label{thm:cover-relations}
The cover relations of the $s$-weak order are in correspondence with $s$-tree rotations. 
\end{theorem}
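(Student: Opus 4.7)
I would prove Theorem~\ref{thm:cover-relations} in two directions: (A) every $s$-tree rotation gives a cover relation, and (B) every cover arises from an $s$-tree rotation. Both directions rely on Lemma~\ref{lem_tree_rotation_inversions} and the inversion-set characterization of tree-ascents from Proposition~\ref{prop:tree-ascent-inversions}.

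For direction (A), let $T'$ be the $s$-tree rotation of $T$ along a tree-ascent $(a,c)$. Lemma~\ref{lem_tree_rotation_inversions} gives $\inv(T) \subsetneq \inv(T')$, so $T \woless T'$. Suppose $T \wole R \wole T'$ with $R \neq T$; I show $R = T'$. By Lemma~\ref{lem_tree_rotation_inversions}, $\inv(T)$ and $\inv(T')$ agree except at pairs $(c, a')$ where $a'$ is $a$ or a non-left descendant of $a$ in $T$, each bumping by one. Since $R \neq T$, there exists such $a'$ with $\card_R(c, a') > \card_T(c, a')$. I would apply planarity of $R$ to the triple $a' < a < c$: either $\card_R(c,a) \geq \card_R(c,a') \geq \card_T(c,a)+1$, or $\card_R(a,a') = s(a)$. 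The second case is ruled out because tree-ascent condition~(iv) forces the strict right child of $a$ in $T$ to be empty, so $\card_T(a,a') < s(a)$ for every non-left descendant $a'$; hence $\card_{T'}(a,a') = \card_T(a,a') < s(a)$ and $\inv(R) \subseteq \inv(T')$ gives $\card_R(a,a') < s(a)$. Thus $\card_R(c,a) \geq \card_T(c,a)+1$. Transitivity of $R$ applied to each triple $a'' < a < c$ with $a''$ a non-left descendant of $a$ (using $\card_R(a,a'') \geq \card_T(a,a'') > 0$) now propagates the bump, giving $\inv(T') \subseteq \inv(R)$ and hence $R = T'$.

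For direction (B), let $T \wocover T'$ and $D := \{(y,x) : \card_T(y,x) < \card_{T'}(y,x)\}$. It suffices to produce a tree-ascent $(a,c)$ of $T$ with $(c,a) \in D$: the corresponding rotation $T''$ will satisfy $\inv(T'') = \tc{(\inv(T)+(c,a))} \subseteq \inv(T')$ (since $\inv(T')$ is transitive and contains $\inv(T)+(c,a)$), so $T \woless T'' \wole T'$ collapses to $T'' = T'$ by the cover hypothesis. I would construct $(a,c)$ in two stages. In Stage~(i), pick $(c, a_0) \in D$ with $c$ minimum and $a_0$ a descendant of $c$ in $T$, then iteratively replace $a_0$ by an ancestor $b$ with $a_0 < b < c$ such that $a_0$ is not in the rightmost subtree of $b$: applying planarity of $T'$ to $a_0 < b < c$ (using $\card_{T'}(c,a_0) > \card_T(c,a_0) = \card_T(c,b)$) yields either $(c, b) \in D$, validating the replacement, or $(b, a_0) \in D$ with $b < c$, contradicting $c$-minimality. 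Since each replacement strictly increases $a_0$, the iteration terminates with $(a_0, c)$ satisfying tree-ascent conditions~(i), (ii), (iii). In Stage~(ii), walk down from $a_0$ along the rightmost branch of its subtree, producing $a_0 > a_1 > \cdots > a_N = \tilde a$ with $a_{k+1}$ the root of $T_{s(a_k)}^{a_k}$, halting at the first $\tilde a$ whose strict right child is empty (or with $s(\tilde a)=0$); this ensures condition~(iv). Iterated transitivity in $T'$ along $a_{k+1} < a_k < c$ (using $\card_{T'}(a_k,a_{k+1}) \geq s(a_k) > 0$) propagates $(c,a_0) \in D$ to $(c,\tilde a) \in D$, and conditions (i)--(iii) transfer to $(\tilde a, c)$ because $\tilde a$ lies in the same subtree of $c$ as $a_0$, while every intermediate ancestor of $\tilde a$ is either on the descent path from $a_0$ (with $\tilde a$ in its rightmost subtree by construction) or an ancestor of $a_0$ in $(a_0, c)$ (handled by (iii) for $a_0$).

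The main obstacle is the opening claim of Stage~(i): that some minimal-$c$ witness in $D$ has $a_0$ genuinely a descendant of $c$ in $T$, rather than merely left of $c$. If every witness at the minimal $c$ has $\card_T(c, a_0) = 0$, I would trace the common ancestor $z > c$ of $a_0$ and $c$ in $T$ and show, via transitivity and planarity in $T'$, that either $(z, c) \in D$ holds (providing a descendant witness with $c$ now playing the role of the smaller value against $z$) or some pair $(b, a_0) \in D$ with $b < c$ surfaces, contradicting $c$-minimality. Making this reduction fully airtight --- particularly handling the case in which $z$ itself is not minimal in any useful sense --- is the most delicate bookkeeping of the argument.
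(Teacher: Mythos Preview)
Your direction~(A) is correct and in fact slightly slicker than the paper's: the paper re-invokes Lemma~\ref{lem:cover-relations-recip} on the intermediate $T''$ to extract a tree-ascent $(a',c')$ and then argues $(a',c')=(a,c)$, whereas you go straight for $\card_R(c,a)$ via planarity and then propagate by transitivity. That works.

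Direction~(B), however, has a genuine gap precisely where you flagged it, and your proposed patch does not hold. Taking $c$ \emph{minimal} over $D$ can force every witness $(c,a_0)$ to have $a_0$ left of~$c$ in~$T$, and in that situation neither of your two escape routes need fire. Concretely, take $s=(1,1,2)$ and let $T$ be the tree with $\card_T(3,2)=1$, $\card_T(3,1)=0$, $\card_T(2,1)=0$ (so $1\in T_0^3$, $2\in T_1^3$), and $T'$ the tree with $\card_{T'}(3,2)=1$, $\card_{T'}(3,1)=1$, $\card_{T'}(2,1)=1$. Then $D=\{(3,1),(2,1)\}$, the minimal $c$ is $2$, and the only witness $(2,1)$ has $1$ left of $2$ in $T$. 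The common ancestor is $z=3$, but $(3,2)\notin D$ since $\card_T(3,2)=\card_{T'}(3,2)=1$, and there is no $(b,1)\in D$ with $b<2$. So your dichotomy ``either $(z,c)\in D$ or some $(b,a_0)\in D$ with $b<c$'' fails outright.

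The paper sidesteps this by choosing $c$ \emph{maximal} over $D$ (Part~1 of the proof of Lemma~\ref{lem:cover-relations-recip}): for any $d>c$ the cardinalities $\card(d,c),\card(d,a)$ are then frozen between $T$ and $T'$, and comparing transitivity in $T'$ with planarity in $T$ forces $\card_T(d,c)=\card_T(d,a)$, i.e.\ $a$ is a descendant of~$c$. Once you have a descendant witness, your iterative Stage~(i) (climbing to ancestors) does work if you interpret ``$c$-minimality'' as minimality among \emph{descendant} witnesses --- the derived pair $(b,a_0)$ automatically has $a_0$ a descendant of $b$. Your Stage~(ii) descent along the right spine is correct and corresponds to the paper's Part~3. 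So the fix is simply to replace ``$c$ minimum'' by ``$c$ maximum'' at the outset (or, equivalently, first prove existence of a descendant witness via $c$-maximality, then take $c$ minimal among those); the rest of your argument then goes through.
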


We need to show that an $s$-decreasing tree $T'$ covers $T$ in the $s$-weak order if and only if there is a tree-ascent~$(a,c)$ of $T$ such that $\inv(T') = \tc{\left( \inv(T) + (c,a) \right)}$.
The proof will follow from the following lemma.

\begin{lemma}
\label{lem:cover-relations-recip}
Let $T$ and $T'$ be two $s$-decreasing trees for some weak composition $s$ with $T \woless T'$. Then there is a tree-ascent $(a,c)$ of $T$ such that $\card_{T'}(c,a) > \card_T(c,a)$.
\end{lemma}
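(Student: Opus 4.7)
The plan is to single out a carefully chosen pair $(a,c)$ and then verify the inversion-theoretic characterization of a tree-ascent given by Proposition~\ref{prop:tree-ascent-inversions}.

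Set $D := \{(y,x) : \card_T(y,x) < \card_{T'}(y,x)\}$, which is non-empty since $T \woless T'$. First I would let $c$ be the smallest value occurring as a first coordinate of a pair in $D$; by minimality, $\card_T(c',x) = \card_{T'}(c',x)$ for every $c' < c$ and every $x < c'$. Let $m := \max\{\card_T(c,x) : (c,x) \in D\}$ and $V_m := \{x : \card_T(c,x) = m,\ \card_{T'}(c,x) > m\}$. Since $m < \card_{T'}(c,x) \le s(c)$, we have $m < s(c)$, so $V_m \subseteq T_m^c$. The recurring tool is: for $y \in V_m$ and $y' < y$ with $\card_T(y,y') = s(y) > 0$, the equality $\card_{T'}(y,y') = s(y)$ is forced, and combining with $\card_{T'}(c,y) > m$ via transitivity in $T'$ gives $\card_{T'}(c,y') > m$; hence $y' \in V_m$ whenever $\card_T(c,y') = m$.

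I would then pick $a \in V_m$ whose \emph{tree address} in $T_m^c$---the sequence of subtree indices along the root-to-$a$ path---is lexicographically largest, with the convention that a proper prefix is strictly smaller than any of its extensions. Conditions (i) and (ii) of Proposition~\ref{prop:tree-ascent-inversions} are immediate from $a \in T_m^c$ and $m < s(c)$. For condition (iv), any alleged violator $a' < a$ with $\card_T(a,a') = s(a) > 0$ lies in $V_m$ by the tool above, and in both geometric cases (either $a' \in T_{s(a)}^a$, or $a'$ is right of $a$ with common ancestor in $T_m^c$) its tree address is strictly lex-larger than that of $a$, contradicting the choice of $a$.

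The main obstacle is condition (iii). Suppose it fails at some $b$ with $a < b < c$, $\card_T(c,b) = m$, $\card_T(b,a) < s(b)$. Planarity in $T'$ applied to $a < b < c$ forces either $\card_{T'}(b,a) = s(b)$, which places $(b,a) \in D$ with $b < c$ and contradicts the minimality of $c$; or else $\card_{T'}(c,b) > m$, so $b \in V_m$. A quick address comparison at the common ancestor of $a$ and $b$ rules out $a$ being left of $b$, so $a$ is a descendant of $b$ in some non-right subtree $T_j^b$ with $j < s(b)$; the lex-maximality argument, now applied to $T_{s(b)}^b$, forces that subtree to be empty. The same lex-maximality of $a$ carries over to show $(b,c)$ still satisfies (i), (ii), and (iv), and if (iii) keeps failing the same analysis produces a new witness in $V_m$ that is a strict ancestor of $b$ in $T_m^c$. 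Since ancestor chains of $a$ in $T_m^c$ are finite, this iteration terminates at some $b^\ast \in V_m$ for which all four conditions hold; $(b^\ast,c)$ is then the sought tree-ascent, with $\card_{T'}(c,b^\ast) > m = \card_T(c,b^\ast)$.
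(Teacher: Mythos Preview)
Your argument has a genuine gap at the very first structural step.  You assert that $m<s(c)$ implies $V_m\subseteq T_m^c$, but this implication only holds when $m>0$: if $\card_T(c,x)=m$ with $0<m<s(c)$ then indeed $x\in T_m^c$, but when $m=0$ the equality $\card_T(c,x)=0$ also allows $x$ to be left of $c$ rather than a descendant of $c$.  In that case $a\notin T_0^c$, so condition~(i) of Proposition~\ref{prop:tree-ascent-inversions} fails and $(a,c)$ is not a tree-ascent.  Your choice of $c$ as the \emph{smallest} first coordinate does nothing to exclude this.

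A concrete counterexample: take $s=(0,1,1)$, let $T$ be the tree with $1\in T_0^3$ and $2\in T_1^3$ (so $\card_T(3,2)=1$, $\card_T(3,1)=0$, $\card_T(2,1)=0$), and let $T'$ be the tree with all three cardinalities equal to $1$.  Then $T\woless T'$ and $D=\{(3,1),(2,1)\}$.  Your minimal $c$ is $2$, $m=0$, and $V_0=\{1\}$; but $1$ is left of $2$ in $T$, not in $T_0^2$, and $(1,2)$ is not a tree-ascent (it violates condition~(i) at $d=3$).  The only tree-ascent in $D$ is $(1,3)$, which your procedure never reaches.

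The paper avoids this by choosing $c$ to be the \emph{maximum} first coordinate in $D$, which is exactly what forces condition~(i); it then refines the choice by minimizing and maximizing $c-a$ to obtain~(iii) and~(iv).  Note that you cannot simply switch to maximal $c$ without reworking the rest: your treatment of condition~(iii) explicitly invokes the minimality of $c$ to derive a contradiction from $(b,a)\in D$, and that step would collapse.
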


\begin{proof}
Let $V = \lbrace (c,a); \card_T(c,a) < \card_{T'}(c,a) \rbrace$, \emph{i.e.}, the tree-inversions of $T$ which cardinality have increased in $T'$. As $T \woless T'$, we know that $V \neq \emptyset$. Let us call $I$, $II$, $III$, and $IV$ the multi sets of inversions which satisfy respectively conditions \eqref{cond:tree-ascent-desc} to \eqref{cond:tree-ascent-smaller}  of Definition~\ref{def:tree-ascent} in $T$. Our goal is to prove that $V \cap I \cap II \cap III \cap IV \neq \emptyset$. First note that $V \subseteq II$: indeed, if the cardinality of $(c,a)$ increases, it can not already be maximal. 

\begin{proofpart}[$V \cap I \neq \emptyset$]
\label{part1_lem:cover-relations-recip}
Let $(c,a) \in V $ such that $c = \max \lbrace j; (j,i) \in V \rbrace$, we claim that $(c,a) \in I$. Suppose that there is $d > c$ such that $\card_T(d,c) \neq \card_T(d,a)$. If $\card_T(d,c) < \card_T(d,a)$, then the planarity condition on $T$ forces $\card_T(c,a) = s(c)$ which contradicts $(c,a) \in V$. We then have $\card_T(d,c) > \card_T(d,a)$. Because $d >c$, the maximality of $c$ implies that those cardinalities are unchanged in $T'$ and so $\card_{T'}(d,c) > \card_{T'}(d,a)$. In particular $\card_{T'}(d,c) > 0$. We also have $\card_{T'}(c,a) > 0$ because $(c,a) \in V$. By transitivity of $T'$, we obtain $\card_{T'}(d,a) \geq \card_{T'}(d,c)$ which contradicts our previous statement.
\end{proofpart}

\begin{proofpart}[$V \cap I \cap III \neq \emptyset$]
Let $(c,a) \in V \cap I$ such that $c-a$ is minimal. We claim that $(c,a) \in III$. Suppose there is $a < b < c$, with $\card_T(c,b) = \card_T(c,a)$ and $\card_T(b,a) < s(b)$. We choose the maximal $b$ that complies. As $(c,a) \in V$, we know that its cardinality increases in $T'$ and, in particular, $\card_{T'}(c,a) = v > 0$. By planarity, we then have either $\card_{T'}(c,b) \geq v$ or $\card_{T'}(b,a) = s(b)$. 

In the case where $\card_{T'}(c,b) \geq v$, we have $\card_T(c,b) = \card_T(c,a) < \card_{T'}(c,a) \leq \card_{T'}(c,b)$ which means $(c,b) \in V$. As $c - b < c -a$, by minimality of $(c,a)$, we have $(c,b) \notin V \cap I$ and so $(c,b) \notin I$. So there is $d > c$ with $\card_T(d,c) \neq \card_T(d,b)$. If $\card_T(d,c) < \card_T(d,b)$, by planarity we have $\card_T(c,b) = s(c)$ which contradicts $(c,b) \in V$. So $\card_T(d,c) > \card_T(d,b)$. But then we have $\card_T(d,a) = \card_T(d,c)$ because $(c,a) \in I$ and we can again apply planarity on $c > b > a$ which leads to the contradiction $\card_T(b,a) = s(b)$.

In the case where $\card_{T'}(b,a) = s(b)$. We have $\card_T(b,a) < \card_{T'}(b,a)$ and so $(b,a) \in V$. By minimality of $c-a$, we then have $(b,a) \notin I$. So there is $d > b$ with $\card_T(d,b) \neq \card_T(d,a)$. If $\card_T(d,b) < \card_T(d,a)$, we obtain the contradiction $\card_T(b,a) = s(b)$ by planarity. Then $\card_T(d,b) > \card_T(d,a)$. As $\card_T(c,b) = \card_T(c,a)$, we cannot have $d=c$. We cannot have $d > c$, otherwise $\card_T(d,a) = \card_T(d,c)$ and we can apply planarity on $d > c > b$ which gives $\card_T(c,b) = s(c) = \card_T(c,a)$ and contradicts $(c,a) \in V$. So $c > d> b > a$. As $\card_T(d,b) > 0$, we have by transitivity $\card_T(c,b) \geq \card_T(c,d)$ and so by our initial hypothesis $\card_T(c,a) \geq \card_T(c,d)$. If $\card_T(c,a) = \card_T(c,d)$, the maximality of $b$ forces $\card_T(d,a) = s(d)$. Similarly, if $\card_T(c,a) > \card_T(c,d)$ we obtain $\card_T(d,a) = s(d)$ by planarity on $c > d > a$. In both cases, this contradicts $\card_T(d,b) > \card_T(d,a)$.
\end{proofpart}

\begin{proofpart}[$V \cap I \cap III \cap IV \neq \emptyset$] Let $(c,a) \in V \cap I \cap III$ such that $c-a$ is maximal. We claim that $(c,a) \in IV$. By contradiction, let us suppose $s(a) > 0$ and that there exists $a' < a$ such that $\card_T(a,a') = s(a)$ and $\card_T(c,a') \leq \card_T(c,a)$. We choose the maximal value $a'$ that complies.

Because $(c,a) \in V$, we have $\card_{T'}(c,a) >  \card_T(c,a) \geq 0$. We also have $\card_{T'}(a,a') \geq \card_{T}(a,a') = s(a) > 0$. By transitivity of $T'$, we get $\card_{T'}(c,a') \geq \card_{T'}(c,a) > \card_T(c,a)$. So $(c,a') \in V$. By maximality of $c-a$, $(c,a') \notin V \cap I \cap III$, so either $(c,a') \notin I$ or $(c,a') \notin III$.
\begin{itemize}
\item If $(c,a') \notin I$, then there is $d > c$ such that $\card_T(d,c) \neq \card_T(d,a')$. Remember that $(c,a) \in I$ and so $\card_T(d,c) = \card_T(d,a)$. By transitivity of $T$, we then have $\card_T(d,a') \geq \card_T(d,a) = \card_T(d,c)$ and so by our hypothesis,  $\card_T(d,c) < \card_T(d,a')$. By planarity, we get that $\card_T(c,a') = s(c)$ and so $\card_T(c,a) \geq \card_T(c,a')$ is also equal to $s(c)$ which contradicts $(c,a) \in V$.
\item If $(c,a') \notin III$, there is $b$ with $a' < b < c$ and $\card_T(c,b) = \card_T(c,a')$ and $\card_T(b,a') < s(b)$. If $b < a$, by planarity on $a > b > a'$, we have $\card_T(a,b) \geq \card_T(a,a') = s(a)$. And we also have by hypothesis $\card_T(c,a) \geq \card_T(c,a') = \card_T(c,b)$. So $b$ satisfies our initial condition on $a'$ but $b > a'$ which contradicts the maximality of $a'$. If $b=a$, this contradicts our hypothesis $\card_T(a,a') = s(a)$. So $b > a$. We have $\card_T(c,b) = \card_T(c,a') \leq \card_T(c,a)$. By planarity on $c > b > a$, we also have $\card_T(c,b) \geq
 \card_T(c,a)$. Indeed $\card_T(b,a) = s(b)$ would imply the contradiction $\card_T(b,a') \geq s(b)$ by transitivity on $b > a > a'$. So $\card_T(c,b) = \card_T(c,a)$. But $(c,a) \in III$ by hypothesis which then implies $\card_T(b,a) = s(b)$ and then again by transitivity $\card_T(b,a') \geq s(b)$. 
\end{itemize}
This proves that $(c,a') \in I \cap III$ which contradicts the maximality of $c-a$ and proves our claim.
\end{proofpart}
The result follows from Parts 1,2 and 3 and the fact that $V \subseteq II$.
\end{proof}

\begin{proof}[Proof of Theorem \ref{thm:cover-relations}]
We will show that $T'$ covers $T$ in the $s$-weak order if and only if there is a tree-ascent~$(a,c)$ of $T$ such that $\inv(T') = \tc{\left( \inv(T) + (c,a) \right)}$.

Let $T$ and $T'$ be two $s$-decreasing trees such that $T'$ covers $T$. Then, $T \woless T'$ and by Lemma~\ref{lem:cover-relations-recip}, there is a tree-ascent $(a,c)$ in $T$ such that $\card_{T'}(c,a) > \card_T(c,a)$. In particular, $\inv(T) + (c,a) \subseteq \inv(T')$ and by transitivity $\tc{(\inv(T) + (c,a))} \subseteq \inv(T')$. By Lemma~\ref{lem_tree_rotation_inversions}, we know that $\tc{(\inv(T) + (c,a))}$ is a tree-inversion set, let $T''$ be its corresponding $s$-decreasing tree. We have $T \woless T'' \wole T'$. By hypothesis,~$T'$ covers $T$ and so $T' = T''$. Therefore $\inv(T') = \inv(T'') = \tc{\left( \inv(T) + (c,a) \right)}$ as desired.

To prove the other direction, let $(a,c)$ be a tree-ascent of an $s$-decreasing tree $T$, and let $T'$ be the rotated $s$-decreasing tree determined by $\inv(T') = \tc{(\inv(T) + (c,a))}$. 
We need to prove that $T'$ covers~$T$ in the $s$-weak order. 
Suppose that there is an $s$-decreasing tree $T''$ such that $T \woless T'' \wole T'$. By Lemma~\ref{lem:cover-relations-recip}, there is a tree-ascent $(a',c')$ of $T$ such that $\card_{T}(c',a') < \card_{T''}(c',a') \leq \card_{T'}(c',a')$.  
From Lemma~\ref{lem_tree_rotation_inversions} one can deduce that $c'=c$ and $a'=a$.
This implies that $\inv(T) + (c,a) \subseteq \inv(T'')$.  
By transitivity, it follows that $\inv(T') \subseteq \inv(T'')$. Therefore $T' = T''$, which implies that $T'$ covers $T$.
\end{proof}

\subsection{The $s$-weak order is polygonal, semidistributive and congruence uniform}
\label{subsec:cong-uniform}

The main purpose of this section is to present several lattice properties of the $s$-weak order. In particular, we will show that the $s$-weak order is congruence uniform, an important notion in lattice theory. 

The case $s=(1,\dots ,1)$ of our result recovers a result of Caspard in~\cite{caspard_latticepermutations_2000}, who  showed that the lattice of permutations (weak order of type $A$) is congruence uniform. 
The generalization of Caspard's result for the weak order of finite Coxeter groups was proven by Caspard, Le Conte de Poly-Barbut and Morvan in~\cite{CasparPBM2004}. 
The proof of congruence uniformity in~\cite{CasparPBM2004} is based on a more general notion of ``$\HH$ lattices'', which we will also prove here for the $s$-weak order.  
As a consequence we obtain that the $s$-weak order is a polygonal, semidistributive and congruence uniform lattice (Theorem~\ref{thm:hh}).

Before defining $\HH$ lattices, we need to introduce several notions. The first is a notion of polygonality.\footnote{As stated here, this notion is taken from~\cite{Reading2016}, but it is equivalent to conditions (1) and (2) in \cite[Definition~10]{CasparPBM2004}. These conditions regard what they call the ``hat" $(x_1,y,x_2)^\meet$ and ``antihat" $(y_1,x,y_2)^\join$. The $\HH$ terminology stands for {\bf Hat} and anti-{\bf H}at.}

\begin{definition}[{\cite[Definition 9-6.1]{Reading2016}}]
\label{def:polygonal}
A lattice $L$ is said to be \defn{polygonal} if the following two conditions hold:
\begin{enumerate}
\item If distinct elements $y_1$ and $y_2$ both cover an element $x$, then the interval $[x, y_1 \join y_2]$ is a polygon.
\label{cond:polygon-up}
\item If an element $y$ covers two distinct elements $x_1$ and $x_2$, then the interval $[x_1 \meet x_2 , y]$ is a polygon.
\label{cond:polygon-down}
\end{enumerate}
Where a \defn{polygon} is an interval $[x,y]$ formed by two finite maximal chains disjoined except at $x$ and $y$.
\end{definition}

You can check on Figure~\ref{fig:sweak_lattice} that the given examples of $s$-weak lattices are all polygonal, with only three possible polygons: squares, pentagons, and hexagons. 
Putting together some results of Lacina~\cite{SWeakSB}, we deduce that this is the case in general.

\begin{proposition}[from Lemmas 3.16 to 3.18 and Theorem 3.19 of \cite{SWeakSB}]
\label{prop:polygons}
The $s$-weak order is a polygonal lattice. 
More precisely, 
let $T$ be an $s$-decreasing tree such that $T$ is covered by $Z$ and $Q$ by rotating tree-ascents $(a,b)$ and $(c,d)$ respectively, with $a < c$. The interval $[T, Z \join Q]$ is either a square, a pentagon, or an hexagon depending on the following cases:
\begin{enumerate}
\item If $(a,b)$ is a tree-ascent of $Q$ and $(c,d)$ is a tree-ascent of $Z$, then $[T, Z \join Q]$ is a square such that the two maximal chains are given by
\begin{align*}
T & \xrightarrow{(a,b)} Z \xrightarrow{(c,d)} Z \join Q \\
T & \xrightarrow{(c,d)} Q \xrightarrow{(a,b)} Z \join Q
\end{align*}
as depicted in the first case of Figure~\ref{fig:polygons}.
\label{cond:polygons-square}
\item If $(a,b)$ is a tree-ascent of $Q$ but $(c,d)$ is not a tree-ascent of $Z$, then $b = c$ and $[T, Z \join Q]$ is a pentagon such that the two maximal chains are given by
\begin{align*}
T & \xrightarrow{(a,c)} Z \xrightarrow{(a,d)} Z' \xrightarrow{(c,d)} Z \join Q \\
T & \xrightarrow{(c,d)} Q \xrightarrow{(a,c)} Z \join Q
\end{align*}
as depicted in the second case of Figure~\ref{fig:polygons}.
\label{cond:polygons-pentagon-left}
\item If $(a,b)$ is not a tree-ascent of $Q$ and $(c,d)$ is a tree-ascent of $Z$, then $b = c$ and $[T, Z \join Q]$ is a pentagon such that the two maximal chains are given by
\begin{align*}
T & \xrightarrow{(a,c)} Z \xrightarrow{(c,d)} Z \join Q \\
T & \xrightarrow{(c,d)} Q \xrightarrow{(a,d)} Q' \xrightarrow{(a,c)} Z \join Q
\end{align*}
as depicted in the third case of Figure~\ref{fig:polygons}.
\label{cond:polygons-pentagon-right}
\item If $(a,b)$ is not a tree-ascent of $Q$ and $(c,d)$ is not a tree-ascent of $Z$, then $b = c$, $s(b) = 1=$, and $[T, Z \join Q]$ is an hexagon such that the two maximal chains are given by
\begin{align*}
T & \xrightarrow{(a,c)} Z \xrightarrow{(a,d)} Z' \xrightarrow{(c,d)} Z \join Q \\
T & \xrightarrow{(c,d)} Q \xrightarrow{(a,d)} Q' \xrightarrow{(a,c)} Z \join Q
\end{align*}
as depicted in the fourth case of Figure~\ref{fig:polygons}.
\label{cond:polygons-hexagon}
\end{enumerate}
\end{proposition}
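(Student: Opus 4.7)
I would begin by applying Theorem \ref{thm:cover-relations} to identify tree-ascents $(a,b)$ and $(c,d)$ of $T$ giving the covers $T \wocover Z$ and $T \wocover Q$. By Remark \ref{rem:tree-ascent-a} the smaller entry of a tree-ascent uniquely determines it, so $a \neq c$; after relabeling we may assume $a < c$. Lemma \ref{lem_tree_rotation_inversions} provides explicit descriptions of $\inv(Z)$ and $\inv(Q)$ in terms of $\inv(T)$, and Proposition \ref{prop:tree-ascent-inversions} gives an inversion-set criterion for whether $(a,b)$ and $(c,d)$ remain tree-ascents in $Q$ and $Z$ respectively. Case~\eqref{cond:polygons-square} is then essentially automatic: if both ascents survive, the two rotations commute to give a diamond, and computing $\inv(Z\join Q)$ via Theorem \ref{thm:lattice} (the join is the transitive closure of the union) shows that the top element is the same along both chains, so $[T,Z\join Q]$ is a square.

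The heart of the argument for the remaining cases is a structural dichotomy: I would show that whenever $(a,b)$ fails to remain a tree-ascent of $Q$, it is forced that $b=c$, and symmetrically for $(c,d)$ in $Z$. The point is that the rotation $T \to Q$ only increases cardinalities of the form $\card(d,x)$ for $x=c$ or $x$ a non-left descendant of $c$ in $T$ (Lemma \ref{lem_tree_rotation_inversions}); only such changes can spoil one of the four conditions of Proposition \ref{prop:tree-ascent-inversions} for $(a,b)$. A case-by-case check against these conditions, combined with the hypothesis that $(a,b)$ was already a tree-ascent of $T$ and the assumption $a<c$, localises the obstruction at the intermediate value $c$ between $a$ and $b$ and forces $b=c$. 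In the hexagon case, where neither ascent survives, an extra combinatorial constraint on the values $\card(b,\cdot)$ available once $b=c$ forces $s(b)=1$; if instead $s(b)>1$, there would be room for $(a,b)$ or $(c,d)$ to persist as a tree-ascent in the other cover, contradicting the case hypothesis.

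Once $b=c$ is established in the pentagon and hexagon cases, I would construct the intermediate elements $Z'$ and $Q'$ by specifying their multi inversion sets directly, then verify using Lemma \ref{lem_tree_rotation_inversions} that each arrow in the indicated sequence is a valid tree rotation, hence a cover by Theorem \ref{thm:cover-relations}. Computing the top inversion set via Theorem \ref{thm:lattice} confirms that both chains end at $Z\join Q$, and counting cover steps along each chain reproduces the claimed polygon shape (four, five, or six elements).

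The main obstacle I foresee is the \emph{exhaustiveness} of the listed chains: showing that $[T, Z\join Q]$ contains no elements beyond those appearing in the two constructed chains, so that the interval really is a polygon as in Definition \ref{def:polygonal}. I would handle this by induction on the length of an arbitrary saturated chain from $T$ to $Z\join Q$, at each step using Lemma \ref{lem:cover-relations-recip} to locate an available tree-ascent and showing that the only tree-ascents available at each intermediate tree are those prescribed by the case analysis; this rigidity of available rotations is what prevents any side-branching in the interval. Finally, condition \eqref{cond:polygon-down} of Definition \ref{def:polygonal} (polygons bounded above by a common cover) follows from condition \eqref{cond:polygon-up} either via the horizontal symmetry $\phi$ of Proposition \ref{prop:symmetry}, or dually through the tree-version construction of Remark \ref{rem:meet}, which exchanges the join and meet structures and sends the upper polygon classification to the lower one.
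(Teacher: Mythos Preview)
The paper does not give its own proof of this proposition: it is stated as a compilation of Lemmas~3.16--3.18 and Theorem~3.19 of Lacina~\cite{SWeakSB}, and is simply invoked as an external result. So there is no in-paper argument to compare your plan against; what you have written is an outline of a direct proof that the paper deliberately outsources.

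Your outline is along reasonable lines and uses the right internal tools (Theorem~\ref{thm:cover-relations}, Lemma~\ref{lem_tree_rotation_inversions}, Proposition~\ref{prop:tree-ascent-inversions}, Theorem~\ref{thm:lattice}), and the overall architecture---show the two rotations either commute or force $b=c$, then build the intermediate trees and verify the chains---is the natural one. Two places deserve more care than your sketch suggests. First, the claim that failure of $(a,b)$ to persist in $Q$ forces $b=c$: the rotation $T\to Q$ only perturbs cardinalities $\card(d,\,\cdot\,)$, so you must track how such a change can break each of the four conditions of Proposition~\ref{prop:tree-ascent-inversions} for $(a,b)$; several subcases arise (e.g.\ $d>b$ breaking condition~\eqref{cond:inv-tree-ascent-desc}, or $d=b$ breaking condition~\eqref{cond:inv-tree-ascent-non-final}), and ruling them all out except the one yielding $b=c$ is the substantive combinatorial work that Lacina carries out in his Lemmas~3.16--3.18. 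Second, the exhaustiveness step you flag as the main obstacle is genuinely the hardest part: showing that at each intermediate tree the \emph{only} available ascents are the listed ones requires analysing the tree structure after one or two rotations, not just the inversion counts, and this is where most of the case analysis in~\cite{SWeakSB} is concentrated. Your induction-on-chain-length idea is fine as a framework, but the content is in bounding the ascents, which you have not yet addressed concretely.
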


\begin{figure}[ht]
\scalebox{.5}{\begin{tabular}{cccc}
\begin{tikzpicture}
\node (T) at (0,0) {$T$};
\node (Z) at (-2,2) {$Z$};
\node (Q) at (2,2) {$Q$};
\node (ZQ) at (0,4) {$Z \join Q$};

\draw (T) edge node[xshift=-.5cm] {$(a,b)$} (Z);
\draw (T) edge node[xshift=.5cm] {$(c,d)$} (Q);
\draw (Z) edge node[xshift=-.5cm] {$(c,d)$} (ZQ);
\draw (Q) edge node[xshift=.5cm] {$(a,b)$} (ZQ);
\end{tikzpicture}
&
\begin{tikzpicture}
\node (T) at (0,0) {$T$};
\node (Z) at (-2,2) {$Z$};
\node (Z2) at (-2,4) {$Z'$};
\node (Q) at (2,4) {$Q$};
\node (ZQ) at (0,6) {$Z \join Q$};

\draw (T) edge node[xshift=-.5cm] {$(a,c)$} (Z);
\draw (Z) edge node[xshift=-.5cm] {$(a,d)$} (Z2);
\draw (T) edge node[xshift=.5cm] {$(c,d)$} (Q);
\draw (Z2) edge node[xshift=-.5cm] {$(c,d)$} (ZQ);
\draw (Q) edge node[xshift=.5cm] {$(a,c)$} (ZQ);
\end{tikzpicture}
&
\begin{tikzpicture}
\node (T) at (0,0) {$T$};
\node (Z) at (-2,2) {$Z$};
\node (Q) at (2,2) {$Q$};
\node (Q2) at (2,4) {$Q'$};
\node (ZQ) at (0,6) {$Z \join Q$};

\draw (T) edge node[xshift=-.5cm] {$(a,c)$} (Z);
\draw (T) edge node[xshift=.5cm] {$(c,d)$} (Q);
\draw (Q) edge node[xshift=.5cm] {$(a,d)$} (Q2);
\draw (Q2) edge node[xshift=.5cm] {$(a,c)$} (ZQ);
\draw (Z) edge node[xshift=-.5cm] {$(c,d)$} (ZQ);
\end{tikzpicture}
&
\begin{tikzpicture}
\node (T) at (0,0) {$T$};
\node (Z) at (-2,2) {$Z$};
\node (Z2) at (-2,4) {$Z'$};
\node (Q) at (2,2) {$Q$};
\node (Q2) at (2,4) {$Q'$};
\node (ZQ) at (0,6) {$Z \join Q$};

\draw (T) edge node[xshift=-.5cm] {$(a,c)$} (Z);
\draw (Z) edge node[xshift=-.5cm] {$(a,d)$} (Z2);
\draw (Q) edge node[xshift=.5cm] {$(a,d)$} (Q2);
\draw (T) edge node[xshift=.5cm] {$(c,d)$} (Q);
\draw (Z2) edge node[xshift=-.5cm] {$(c,d)$} (ZQ);
\draw (Q2) edge node[xshift=.5cm] {$(a,c)$} (ZQ);
\end{tikzpicture}
\\
\begin{tikzpicture}[yscale=1.5]
\node (T) at (0,0) {\scalebox{.6}{\input{figures/dtrees/T_012_two_facets1}}};
\node (Z) at (-2,2) {\scalebox{.6}{\input{figures/dtrees/T_012_two_facets2}}};
\node (Q) at (2,2) {\scalebox{.6}{\input{figures/dtrees/T_012_two_facets3}}};
\node (ZQ) at (0,4){\scalebox{.6}{\input{figures/dtrees/T_012_two_facets4}}};

\draw (T) -- (Z);
\draw (T) -- (Q);
\draw (Z) -- (ZQ);
\draw (Q) -- (ZQ);
\end{tikzpicture}
&
\begin{tikzpicture}[yscale=1.5]
\node (T) at (0,0) {\scalebox{.6}{\input{figures/dtrees/T_022_two_facets4}}};
\node (Z) at (-2,2) {\scalebox{.6}{\input{figures/dtrees/T_022_two_facets6}}};
\node (Z2) at (-2,4) {\scalebox{.6}{\input{figures/dtrees/T_022_two_facets7}}};
\node (Q) at (2,4) {\scalebox{.6}{\input{figures/dtrees/T_022_two_facets4}}};
\node (ZQ) at (0,6) {\scalebox{.6}{\input{figures/dtrees/T_022_two_facets8}}};

\draw (T) -- (Z);
\draw (Z) -- (Z2);
\draw (T) -- (Q);
\draw (Z2) -- (ZQ);
\draw (Q) -- (ZQ);
\end{tikzpicture}
&
\begin{tikzpicture}[yscale=1.5]
\node (T) at (0,0) {\scalebox{.6}{\input{figures/dtrees/T_022_two_facets1}}};
\node (Z) at (-2,2) {\scalebox{.6}{\input{figures/dtrees/T_022_two_facets4}}};
\node (Q) at (2,2) {\scalebox{.6}{\input{figures/dtrees/T_022_two_facets2}}};
\node (Q2) at (2,4) {\scalebox{.6}{\input{figures/dtrees/T_022_two_facets3}}};
\node (ZQ) at (0,6){\scalebox{.6}{\input{figures/dtrees/T_022_two_facets5}}};

\draw (T) -- (Z);
\draw (T) -- (Q);
\draw (Q) -- (Q2);
\draw (Q2) -- (ZQ);
\draw (Z) --  (ZQ);
\end{tikzpicture}
&
\begin{tikzpicture}[yscale=1.5]
\node (T) at (0,0) {\scalebox{.6}{\input{figures/dtrees/T_012_two_facets5}}};
\node (Z) at (-2,2) {\scalebox{.6}{\input{figures/dtrees/T_012_two_facets6}}};
\node (Z2) at (-2,4) {\scalebox{.6}{\input{figures/dtrees/T_012_two_facets7}}};
\node (Q) at (2,2) {\scalebox{.6}{\input{figures/dtrees/T_012_two_facets1}}};
\node (Q2) at (2,4) {\scalebox{.6}{\input{figures/dtrees/T_012_two_facets2}}};
\node (ZQ) at (0,6) {\scalebox{.6}{\input{figures/dtrees/T_012_two_facets8}}};

\draw (T) -- (Z);
\draw (Z) -- (Z2);
\draw (Q) -- (Q2);
\draw (T) --  (Q);
\draw (Z2) -- (ZQ);
\draw (Q2) -- (ZQ);
\end{tikzpicture}
\end{tabular}}
\caption{The four possible intervals between $T$ and $Z \join Q$ where $T \wocover Z$ and $T \wocover Q$}
\label{fig:polygons}
\end{figure}

Using this nice characterization of the polygons, we can prove that the $s$-weak order is a semidistributive lattice. 
As we will see,  semidistributivity is a necessary condition to be congruence uniform, and a necessary condition to be an $\HH$ lattice.

\begin{definition}
A lattice is said to be \defn{meet-semidistributive} if for all elements $x,y,z$, we have
\begin{align}
z \meet x = z \meet y \Rightarrow z \meet \left( x \join y \right) = z \meet x = z \meet y.
\end{align}

Similarly, it is \defn{join-semidistributive} if
\begin{align}
z \join x = z \join y \Rightarrow z \join \left( x \meet y \right) = z \join x = z \join y.
\end{align}

A lattice is \defn{semidistributive} if it is both meet-semidistributive and join-semidistributive.
\end{definition}

A direct proof that the lattice of permutations (weak order of type $A$) is semidistributive can be found in~~\cite{DuquenneCherfouh94}. The general result of semidistributivity for the weak order of finite Coxeter groups can be found in~\cite{PolyBarbut94}. 

\begin{proposition}
\label{prop:semidistributive}
The $s$-weak order is a semidistributive lattice.
\end{proposition}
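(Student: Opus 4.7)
My plan is to reduce to proving join-semidistributivity only. By Proposition~\ref{prop:symmetry}, the involution $\phi$ is an order-reversing automorphism of the $s$-weak order, which interchanges meets and joins. Consequently, meet- and join-semidistributivity are equivalent, and it suffices to prove that whenever $Z \join X = Z \join Y = U$ one also has $Z \join (X \meet Y) = U$.

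For join-semidistributivity I would exploit the polygon classification of Proposition~\ref{prop:polygons}. Label each cover $T \wocover T'$ by the unique tree-ascent $(a,c)$ of $T$ whose rotation produces $T'$ (Theorem~\ref{thm:cover-relations}, with uniqueness from Remark~\ref{rem:tree-ascent-a}). The essential local property, verifiable by direct inspection of all four polygon types, is the following: whenever $T$ is covered by $X$ and $Y$ with labels $\lambda$ and $\mu$ respectively, the top cover of any maximal chain from $X$ up to $X \join Y$ carries label $\mu$, and symmetrically the top cover of any chain from $Y$ up to $X \join Y$ carries label $\lambda$. For the square case, both chains have length two and use the labels in swapped order; in the two pentagonal cases the chains have lengths two and three but the end labels still swap; and in the hexagonal case both chains carry the label sequences $\lambda,(a,d),\mu$ and $\mu,(a,d),\lambda$, sharing the middle label.

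With this local polygon property in hand, join-semidistributivity is deduced by a standard inductive argument on the rank of $U$ minus the rank of $Z$ (where the rank counts the cardinality of the tree-inversion multiset). The base case $Z = U$ is immediate. For the inductive step, picking a cover $Z \wocover Z_1$ below $U$ and applying the local property to the polygons $[Z, Z_1 \join X_0]$ and $[Z, Z_1 \join Y_0]$ for suitable covers $Z \wocover X_0 \wole X$ and $Z \wocover Y_0 \wole Y$ forces $Z_1 \wole X$ and $Z_1 \wole Y$, hence $Z_1 \wole X \meet Y$. Iterating this step yields $U \wole Z \join (X \meet Y)$; the reverse inequality is trivial.

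The hardest part will be the hexagonal case of Proposition~\ref{prop:polygons} (Case~\ref{cond:polygons-hexagon}), where both maximal chains have length three and share the interior label $(a,d)$. The rigidity condition $s(b) = 1$ is essential there to ensure that the middle $(a,d)$-rotation interacts correctly with the boundary $(a,c)$- and $(c,d)$-rotations, so that the end-label swap still holds. Once the local polygon property is established in all four cases, the inductive propagation to semidistributivity is a classical argument, paralleling Caspard's proof for the type~$A$ weak order in~\cite{caspard_latticepermutations_2000}, and it will also be reused inside the proof of Theorem~\ref{thm:hh}.
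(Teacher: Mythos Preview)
Your reduction to join-semidistributivity via the order-reversing involution of Proposition~\ref{prop:symmetry} is correct and is exactly what the paper does. The local polygon property you identify (that the top covers of the two maximal chains in each polygon swap the bottom labels) is also correct and is precisely the content of Proposition~\ref{prop:polygons}.

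The gap is in your inductive step. You claim that for a cover $Z \wocover Z_1$ below $U$ there exist ``suitable covers $Z \wocover X_0 \wole X$ and $Z \wocover Y_0 \wole Y$'', and that the local property then forces $Z_1 \wole X$ and $Z_1 \wole Y$. But nothing in the hypothesis $Z \join X = Z \join Y = U$ guarantees $Z \wole X$ or $Z \wole Y$, so such covers $X_0, Y_0$ need not exist at all. For a concrete failure in the classical case $s=(1,1,1)$, take $Z=213$, $X=132$, $Y=312$: then $Z \join X = Z \join Y = 321$, yet $Z$ and $X$ are incomparable, so no cover $Z \wocover X_0 \wole X$ exists, and neither cover of $Z$ lies below $X$. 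Even when such $X_0,Y_0$ do exist, the label-swap property of a single polygon only tells you about the top edge of the polygon, not that $Z_1$ itself lies below $X$. Your conclusion ``iterating yields $U \wole Z \join (X\meet Y)$'' would in fact require $Z_i \wole X \meet Y$ all the way up the chain, hence $U \wole X \meet Y$, which is plainly false in general.

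The paper avoids this by invoking a criterion of Reading \cite[Lemma~9-2.6]{Reading2016}: to prove join-semidistributivity it suffices to check the implication only for $y,z$ that are \emph{covered by a common element}. Under that extra hypothesis $y$ and $z$ sit at the top of one of the four polygons of Proposition~\ref{prop:polygons}, with $T = y \meet z$ at the bottom. The paper then writes out, for each polygon type, the explicit expression of $\inv(x \join y) = \inv(x \join z)$ as a transitive closure of $\inv(x) \cup (\inv(T) + \cdots)$, and shows via a short transitivity-path argument that the relevant extra inversions already appear in $\inv(T \join x)$. This is a direct, case-by-case computation at the level of inversion multisets rather than an induction on rank.
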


\begin{proof}
It suffices to prove join-semidistributivity. The meet-semidistributivity is obtained by symmetry of the lattice. We use a criterion from~\cite[Lemma 9.2-6]{Reading2016} which states that join-semidistributivity is equivalent to: for all $x,y,z$ such that $y$ and $z$ are covered by a common element, and $x \join y = x \join z$, then $x \join (y \meet z) = x \join z = x \join y$.

We write $T = y \meet z$ and $T' = x \join y = x \join z$. We want to prove that $T \join x = T'$. As $y$ and $z$ are covered by a common element, they fall under one of the four possibilities described in Proposition~\ref{prop:polygons}, \ie $y \join z$ is the top of a polygon (square, petagon, or hexagon) which initial element is $T$. Let us suppose that $y$ is on the chain which initial rotation is through the tree ascent $(a,b)$ and $z$ is on the chain which initial rotation is through the tree ascent $(c,d)$. First observe that as $T \wole y$, then $T \join x \wole y \join x$. Now, the different cases of Proposition~\ref{prop:polygons} give different expressions for the inversions of $T'$,
\begin{enumerate}
\item $\inv(T') = \tc{(\inv(x) \cup (\inv(T) + (b,a)))} = \tc{(\inv(x) \cup (\inv(T) + (d,c)))}$;
\item $\inv(T') = \tc{(\inv(x) \cup (\inv(T) + (c,a) + (d,a)))} = \tc{(\inv(x) \cup (\inv(T) + (d,c)))}$;
\item $\inv(T') = \tc{(\inv(x) \cup (\inv(T) + (c,a)))} = \tc{(\inv(x) \cup (\inv(T) + (d,c) + (d,a)))}$;
\item $\inv(T') = \tc{(\inv(x) \cup (\inv(T) + (c,a) + (d,a)))} = \tc{(\inv(x) \cup (\inv(T) + (d,c) + (d,a)))}$.
\end{enumerate}

Let us look at case~\eqref{cond:polygons-square} where the polygon is a square. We need to prove that $\card_{T \join x}(d,c) \geq \card_T(d,c) + 1$. This would imply that $(\inv(T) + (d,c)) \subseteq \inv(T \join x)$ and as by definition $\inv(x) \subseteq \inv(T \join x)$, by transitivity we would have $\inv(T') \subseteq \inv(T \join x)$. We know that $\card_{T'}(d,c) \geq \card_{z}(d,c) = \card_T(d,c) +1$. This means that there is a transitivity path between $d$ and $c$ in $(\inv(x) \cup (\inv(T) + (b,a)))$. As $c > a$, this transitivity path also exists directly in $\inv(x) \cup \inv(T)$ and so $\card_{T \join x}(d,c) \geq \card_T(d,c) + 1$. This proof also works for case~\eqref{cond:polygons-pentagon-left}.

For case~\eqref{cond:polygons-pentagon-right}, we need to prove that $\card_{T \join x}(c,a) \geq \card_T(c,a) + 1$. Again, we know that $\card_{T'}(c,a) \geq \card_y(c,a) = \card_T(c,a) + 1$. This means that there is a transitivity path in $(\inv(x) \cup (\inv(T) + (d,c) + (d,a)))$ between $c$ and $a$. As $c < d$, this transitivity path also exists in $\inv(x) \cup \inv(T)$ and so $\card_{T \join x}(c,a) \geq \card_T(c,a)+1$.

In case~\eqref{cond:polygons-hexagon}, a similar reasoning leads to $\card_{T \join x}(c,a) \geq \card_T(c,a) + 1$ and $\card_{T \join x}(d,c) \geq \card_T(d,c) +1$. By transitivity, we obtain $\card_{T \join x}(d,a) \geq \card_T(d,a) + 1$. This implies $\inv(T') \subseteq \inv(T \join x)$.
\end{proof}

Now, we are finally ready to show that the $s$-weak order is an $\HH$ lattice. We recall the definition from~\cite{CasparPBM2004}.

\begin{definition}[{\cite[Definition~10]{CasparPBM2004}}]
Given a lattice $L$, we denote by $E(L)$ the set of covering relations of $L$. 
We say that $L$ is an \defn{$\HH$-lattice} if it is finite, semidistributive, polygonal\footnote{Note that the notion of polygonal lattice had not yet been introduced in~\cite{CasparPBM2004} but in their definition of $\HH$-lattices, they describe an equivalent notion.} and there exist a labeling function $$\ell : E(L) \longrightarrow \mathcal{L}$$ where $\mathcal{L}$ is a set of labels, and a ranking function $$r : \mathcal{L} \longrightarrow \NN$$ satisfying the following conditions on every polygon $[x,y]$ of $L$.

We denote by $x_1$, $x_2$ the two elements covering $x$, and by $y_1$, $y_2$ the two elements covered by $y$, such that $x_1$ and $y_1$ (resp. $x_2$ and $y_2$) belong to the same maximal chain. The labeling $\ell$ and rank function $r$ must satisfy:

\begin{enumerate}
\item $\ell(x,x_1) = \ell(y_2, y)$ and $\ell(x,x_2) = \ell(y_1,y)$;
\label{cond:HH-labels}
\item for $t_1, \dots, t_k$ the labels of a maximal chain in a polygon, then 
\label{cond:HH-ranks}
\begin{align*}
r(t_1),r(t_k) < r(t_2), r(t_{k-1}) < \dots < r(t_{\frac{k+1}{2}}) &\text{if }k \text{ is odd,} \\
r(t_1),r(t_k) < r(t_2), r(t_{k-1}) < \dots < r(t_{\frac{k}{2}}),r(t_{\frac{k}{2}+1}) &\text{if }k \text{ is even.} 
\end{align*}
\end{enumerate}
\end{definition}
\medskip

For example, the labels on the four polygons shown on top of Figure~\ref{fig:polygons} satisfy Condition~\eqref{cond:HH-labels}. We will show that they give a general rule to label all cover relations of the $s$-weak order and define an appropriate rank function. 

\begin{theorem}[{\cite[Corollary~1]{CasparPBM2004}}]
Every $\HH$-lattice is congruence uniform.
\end{theorem}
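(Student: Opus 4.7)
The plan is to invoke Day's characterization of congruence uniform lattices as precisely the finite lattices obtainable from the one-element lattice by a sequence of interval doublings. Equivalently, I will produce a strictly increasing chain of lattice congruences on $L$, starting at the identity and ending at the total relation, such that each successive quotient collapses a single interval. By Day's theorem, such a sequence witnesses congruence uniformity.

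Given an $\HH$-lattice $L$ with labeling $\ell \colon E(L) \to \mathcal{L}$ and rank function $r$, my first step would be to show that for every label $t \in \mathcal{L}$ there is a well-defined lattice congruence $\Theta_t$ whose collapsed cover relations are exactly the fibers of $\ell$ over $t$. The key inputs here are semidistributivity, which guarantees that the congruence lattice of $L$ is governed by its behaviour on cover relations, and condition~\eqref{cond:HH-labels}, which ensures that the relation ``opposite sides of a polygon carry the same label'' is consistent across all polygons of $L$.

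Next I would order the labels by decreasing rank and peel them off one at a time. Let $t_0 \in \mathcal{L}$ be a label of maximal rank. Condition~\eqref{cond:HH-ranks} then implies that edges labeled $t_0$ occur only in the \emph{interior} of maximal chains of polygons, never on the outer edges $(x,x_i)$ or $(y_i,y)$. Consequently $\Theta_{t_0}$ collapses only interior covers and each of its nontrivial classes is an interval, so passing to the quotient $L/\Theta_{t_0}$ undoes exactly one interval doubling. After checking that $L/\Theta_{t_0}$ is again an $\HH$-lattice with label set $\mathcal{L}\setminus\{t_0\}$, induction on $|\mathcal{L}|$ terminates the process at the one-element lattice.

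The main obstacle will be the induction step: proving that $L/\Theta_{t_0}$ inherits the full $\HH$-lattice structure. Verifying that semidistributivity and polygonality pass to the quotient is standard for interval doublings, but confirming that the induced labeling and rank function still satisfy conditions~\eqref{cond:HH-labels} and~\eqref{cond:HH-ranks} on \emph{every} polygon of $L/\Theta_{t_0}$ is delicate: one has to rule out the possibility that collapsing the interior $t_0$-edges of polygons creates new, unlabeled polygons in the quotient, or merges label classes in an inconsistent way. Semidistributivity together with the precise positioning of $t_0$-edges forced by the maximal-rank hypothesis are the crucial ingredients that rule out these pathologies.
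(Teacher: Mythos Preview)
This theorem is not proved in the present paper at all: it is quoted verbatim as Corollary~1 of~\cite{CasparPBM2004} and used as a black box. So there is no ``paper's own proof'' to compare against; your task, if you want to include a proof, is really to reproduce (or improve upon) the argument of Caspard, Le~Conte de Poly-Barbut and Morvan.

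Your outline is headed in the right direction and captures the essential mechanism: condition~\eqref{cond:HH-ranks} forces labels of maximal rank to sit strictly in the interior of polygon chains, and this is exactly what one needs to peel them off first. However, two steps in your sketch are not yet justified and, as stated, are not quite correct.

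First, you assert that for each label $t$ there is a lattice congruence $\Theta_t$ whose collapsed covers are \emph{exactly} the edges labeled $t$. This is the heart of the matter and does not follow from semidistributivity and condition~\eqref{cond:HH-labels} alone; one must show that the congruence generated by any single $t$-edge collapses no edge with a different label. The standard way to do this uses the polygonal structure together with the rank function: collapsing an edge in a polygon forces the collapse of the opposite edge (same label by~\eqref{cond:HH-labels}) and of all edges of \emph{higher} rank on the two chains, never of lower rank. Tracking this ``forcing'' relation and showing it respects labels is precisely where condition~\eqref{cond:HH-ranks} is used, and you have not spelled this out.

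Second, the sentence ``each of its nontrivial classes is an interval, so passing to the quotient $L/\Theta_{t_0}$ undoes exactly one interval doubling'' conflates two different things. Congruence classes are \emph{always} intervals in a finite lattice; that alone says nothing about doublings. What you need is that $\Theta_{t_0}$ is a \emph{join-irreducible} congruence (an atom of $\operatorname{Con}(L)$), so that $L \to L/\Theta_{t_0}$ is a single doubling step. This again requires the forcing analysis above: a maximal-rank label forces no other label, hence the congruence it generates is minimal. Once this is established, the induction you describe goes through, and checking that the quotient inherits the $\HH$ structure is then routine.
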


This motivates the main result of this section stated below.
Similarly as in our proof of the semidistributivity property, our proof of the $\HH$ property relies on the exhaustive description of the polygons appearing in the $s$-weak order provided by Lacina in~\cite{SWeakSB}.

\begin{theorem}
\label{thm:hh}
The $s$-weak order is an $\HH$ lattice. Hence, it is polygonal, semidistributive and congruence uniform.
\end{theorem}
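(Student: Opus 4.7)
The plan is to use Propositions \ref{prop:polygons} and \ref{prop:semidistributive} to reduce the theorem to constructing an appropriate $\HH$-labeling, and then to invoke \cite[Corollary~1]{CasparPBM2004} to deduce congruence uniformity. Since polygonality and semidistributivity are already established, the only genuinely new work is exhibiting the labeling $\ell$ and rank function $r$, and checking the two polygon conditions of the $\HH$ definition.

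First I would define the labeling and rank. By Theorem \ref{thm:cover-relations}, each cover relation $T \wocover T'$ corresponds uniquely to a tree-ascent $(a,c)$ of $T$. I set $\ell(T,T') := (a,c)$ with label set $\mathcal{L} = \{(a,c) : 1 \leq a < c \leq n\}$, and define $r(a,c) := c-a$.

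Next I would verify the $\HH$-conditions polygon by polygon, using the explicit descriptions in Proposition \ref{prop:polygons}. For condition (1), one reads from each of the four chain descriptions that opposite edges of the polygon carry the same label. In the square case (1) both pairs of parallel edges are labelled $(a,b)$ and $(c,d)$. In the pentagon cases (2) and (3), using $b=c$, the two edges labelled $(a,c)$ and the two edges labelled $(c,d)$ are exactly positioned so that $\ell(x,x_1)=\ell(y_2,y)$ and $\ell(x,x_2)=\ell(y_1,y)$. The hexagon case (4) is analogous. For condition (2), chains of length $2$ (squares, and the short side of each pentagon) give a vacuous inequality. On chains of length $3$ (the long side of each pentagon and both sides of the hexagon), the three labels are a permutation of $(a,c), (a,d), (c,d)$ with the middle label always equal to $(a,d)$; the required inequalities $r(t_1), r(t_3) < r(t_2)$ then reduce to $c-a < d-a$ and $d-c < d-a$, which hold because $a<c<d$.

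Once the $\HH$-structure is established, \cite[Corollary~1]{CasparPBM2004} directly delivers congruence uniformity, completing the proof. The only subtlety is the bookkeeping in condition (1) to match the edges labelled $(a,c)$ and $(c,d)$ with the prescribed $(x_1,y_1)$ and $(x_2,y_2)$ chain assignment; this is a routine inspection of Figure \ref{fig:polygons} and the four cases of Proposition \ref{prop:polygons}, and no new combinatorial input beyond that proposition is needed.
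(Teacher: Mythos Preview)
Your proposal is correct and follows essentially the same approach as the paper: the same labeling $\ell(T,T')=(a,c)$ by the tree-ascent, the same rank $r(a,c)=c-a$, and the same case-by-case verification of the $\HH$ conditions against the polygon descriptions of Proposition~\ref{prop:polygons}, followed by the appeal to \cite[Corollary~1]{CasparPBM2004}.
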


\begin{proof}
By Proposition~\ref{prop:polygons}, the $s$-weak lattice is polygonal. By Proposition~\ref{prop:semidistributive}, it is also semidistributive. Let $T$ be an $s$-decreasing tree covered by a tree $T'$ through the tree rotation $(a,b)$. We define the labeling function $\mathcal{L}$ by $\mathcal{L}(T,T') = (a,b)$ and the rank function $r$ by $r(a,b) = b - a$. We need to check that $\mathcal{L}$ and $r$ satisfy the conditions of $\HH$ lattices.

Looking at the four possible polygons of Proposition~\ref{prop:polygons}, it is immediate to see that $\mathcal{L}$ satisfies the condition. As for the ranking $r$, observe that the maximal chains in the polygons can be of length either~$2$ or~$3$. There is nothing to check for the length $2$ ones. For length $3$, if the chain is labeled by $(t_1,t_2,t_3)$ we need to have $r(t_1) < r(t_2)$ and $r(t_3) < r(t_2)$. In all cases, we have $t_1 = (a,c)$ and $t_3 = (c,d)$ or the other way around, and $t_2 = (a,d)$. As $d > c > a$, we have indeed that $r(a,c) = c - a < d - a = r(a,d)$ and $r(c,d) = d - c < d - a = r(a,d)$. 
\end{proof}

\begin{remark}
A lattice is said to be \defn{congruence uniform} if it is semidistributive and it can be obtained from the one element set by a sequence of \emph{doublings} of convex sets~\cite{day_splitting_1977}; we refer to~\cite{reading_latticeproperties_2003} for more details. 
Equivalently, a lattice is congurence uniform if it can be obtained from the one element lattice by a sequence of doublings of intervals~\cite{day_splitting_1977}. 
In forthcoming work, we will provide explicit sequence of doublings giving rise to the $s$-weak order and to the $s$-Tamari lattice defined below, thus giving an alternative proof of the congruence uniform property.
\end{remark}

\section{The $s$-Tamari lattice}

In this section we study properties of a sublattice of the $s$-weak order that we call the $s$-Tamari lattice. 
We show that if $s$ contains no zeros, then it can also be obtained as a quotient lattice of the $s$-weak oder. 
For a general signature $s$, we provide a simple description of the cover relations in terms of certain rotation on trees, 
and show that the $s$-Tamari lattice is isomorphic to a well chosen $\nu$-Tamari lattice, a poset introduced by Pr\'eville-Ratelle and Viennot in~\cite{PrevilleRatelleViennot}. 

\subsection{The sublattice of $s$-Tamari trees}
\begin{definition}
\label{def:s-tam-tree}
An $s$-decreasing tree $T$ is called an \defn{$s$-Tamari tree} if for any $a<b<c$ the number of~$(c,a)$ inversions is less than or equal to the number of~$(c,b)$ inversions: 
\[
\card_T(c,a)\leq \card_T(c,b).
\]
In terms of the tree this means that the node labels in $T^c_i$ are smaller than all the labels in $T^c_j$ for $i<j$.
The multi set of inversions of an $s$-Tamari tree is called an \defn{$s$-Tamari inversion set}.

The \defn{$s$-Tamari poset} is the restriction of the $s$-weak order to the set of $s$-Tamari trees. 
\end{definition}

In the case of permutations ($s = (1,1,\dots)$), $s$-Tamari trees are in correspondence with $231$-avoiding permutations. 
If the signature $s$ has no zeros, then $s$-decreasing trees correspond to $121$-avoiding \mbox{$s$-permutations}, 
and $s$-Tamari trees to the subset of those that are $231$-avoiding. 
Figure~\ref{fig:s-Tamari_trees023} illustrates an example of the lattice of $s$-decreasing trees and the sub-collection of $s$-Tamari trees for $s=(0,2,3)$. As we will see now, this sub-collection has the structure of a sublattice. 
The following result generalizes a result by Bj\"{o}rner and Wachs in~\cite[Section~9]{bjorner_wachs_shellableII_1997}, which slightly rephrased shows that the classical Tamari lattice can be realized as the sublattice of $231$-avoiding permutations of the classical weak order.

\begin{figure}[htbp]
\begin{center}
\begin{tabular}{cc}
\scalebox{1.5}{\input{figures/sweak_s023_stam.tex}}
&
\includegraphics[width=0.5\textwidth]{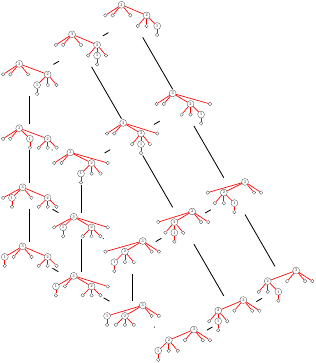}
\end{tabular}
\end{center}
\caption{$s$-decreasing trees and $s$-Tamari trees for $s=(0,2,3)$. On the left picture, the non $s$-Tamari trees are in black.}
\label{fig:s-Tamari_trees023}
\end{figure}

\begin{theorem}
\label{thm:stam-sublattice}
The $s$-Tamari poset is a sublattice of the $s$-week order. In particular, it is a lattice.
\end{theorem}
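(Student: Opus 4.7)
By Theorem~\ref{thm:lattice}, the join in the $s$-weak order is computed as $\inv(T\join R)=\tc{(\inv(T)\cup\inv(R))}$, so to show that $s$-Tamari trees are closed under join it suffices to verify that the Tamari property is preserved both by pointwise union and by transitive closure of multi inversion sets. The first is immediate: for $a<b<c$,
\[
\max(\card_T(c,a),\card_R(c,a))\leq\max(\card_T(c,b),\card_R(c,b)),
\]
whenever $T$ and $R$ are $s$-Tamari, so $\inv(T)\cup\inv(R)$ is an $s$-Tamari multi inversion set.

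The key step is the second one. Let $K$ be an $s$-Tamari multi inversion set satisfying the planarity condition, and fix $a<b<c$; set $v=\card_{\tc{K}}(c,a)$. If $v=0$ there is nothing to prove, so assume $v>0$ and pick a transitivity path $c=b_1>b_2>\cdots>b_k=a$ in $K$ with $\card_K(b_1,b_2)=v$. The plan is to produce a transitivity path from $c$ to $b$ whose first edge still has cardinality at least $v$. If $b$ already appears on the path as some $b_j$, the prefix $c=b_1>\cdots>b_j$ does the job. Otherwise there is an index $j$ with $b_j>b>b_{j+1}$, and applying the Tamari inequality on $K$ to the triple $b_{j+1}<b<b_j$ yields $\card_K(b_j,b)\geq\card_K(b_j,b_{j+1})>0$, so the list $c=b_1>\cdots>b_j>b$ is a valid transitivity path with first edge of cardinality $v$. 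In either case $\card_{\tc{K}}(c,b)\geq v$, so $\tc{K}$ is $s$-Tamari. Combined with Proposition~\ref{prop:tc-tree-inversion}, this proves $T\join R$ is $s$-Tamari.

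For closure under meet I would use the dual description via tree-versions from Remark~\ref{rem:meet}: the Tamari condition $\card(c,a)\leq\card(c,b)$ for $a<b<c$ translates to $\text{vers}(a,c)\geq\text{vers}(b,c)$, and the meet is computed by the transitive closure of the union of version sets. The same two-step argument (closure under pointwise maxima of versions, then closure under the corresponding transitive closure on version paths) applies \emph{mutatis mutandis}, with the $s$-Tamari inequality for versions playing the role of positivity preservation when extending a path.

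The main obstacle I anticipate is in the transitive-closure step, namely ensuring that the truncated path still consists of edges with positive cardinality in $K$. This is precisely the content of the $s$-Tamari inequality applied along each relevant triple, which is what makes the argument work in both the inversion and the version formulations; once this is in place, the join/meet formulas for the $s$-weak order automatically transport to the $s$-Tamari subset, yielding the sublattice property.
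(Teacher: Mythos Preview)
Your proof is correct and follows essentially the same strategy as the paper: both use the Tamari inequality to reroute a transitivity path so as to compare $\card_{\tc{K}}(c,a)$ with $\card_{\tc{K}}(c,b)$, and both handle the meet via the mirror/version dual. Your decomposition into ``union preserves the Tamari inequality'' followed by ``transitive closure preserves the Tamari inequality'' is a bit more modular than the paper's version (which works directly with $(T,R)$-transitivity paths and reduces to $b=a+1$, modifying only the final step of the path rather than an interior step $b_j>b>b_{j+1}$), but the underlying combinatorics is identical; your meet sketch via versions is likewise the dual of the paper's mirror-image argument.
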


\begin{proof}
We need to show that the collection of $s$-Tamari trees is closed under the meet and join operations.

Let $T$ and $R$ be two $s$-Tamari trees. We will show that their join $T \join R$ is an $s$-Tamari tree, or in other words, that $\card_{T\join R}(c,a)\leq \card_{T\join R}(c,b)$ for any $a<b<c$.
It suffices to show this in the case where~$b=a+1$.
For this, we will use the following alternative description of the inversions of $T\join R$, which follows from Theorem~\ref{thm:lattice} and Definition~\ref{def:tc-tree-inversions}. 
Let us call a $(T,R)$-transitive path between $c>a$ to any list $c=b_1>b_2>\dots b_k=a$ such that $\card_T(b_i,b_{i+1})>0$ or $\card_R(b_i,b_{i+1})>0$ for all $1\leq i<k$. 
The inversion $\card_{T\join R}(c,a)$ is the highest possible value of $\max\{\card_T(b_1,b_2),\card_R(b_1,b_2)\}$ over all such $(T,R)$-transitive paths.
If $a<a+1<c$ we need to show 
\[
\card_{T\join R}(c,a) \leq \card_{T\join R}(c,a+1).
\]
Without loss of generality we can assume that $c=b_1>b_2>\dots b_k=a$ is a $(T,R)$-transitive path such that $\card_{T\join R}(c,a)=\card_T(b_1,b_2)$.
If $b_{k-1}=a+1$ then $c=b_1>b_2>\dots b_{k+1}=a+1$ is a $(T,R)$-transitive path between $c>a+1$ and therefore  
\[
\card_{T\join R}(c,a+1) \geq \card_T(b_1,b_2) =\card_{T\join R}(c,a),
\]
which finishes our claim.
In the other case, $b_{k-1}>a+1>a$ and since $T$ and $R$ are $s$-Tamari trees we have 
\[
\card_T(b_{k-1},a+1) \geq \card_T(b_{k-1},a)
\quad\quad \text{and} \quad\quad
\card_R(b_{k-1},a+1) \geq \card_R(b_{k-1},a).
\]
Since one of these two numbers is bigger than zero, then $c=b_1>b_2>\dots>b_{k-1}>a+1$ is also a $(T,R)$-transitive path between $c>a+1$. Thus, 
\[
\card_{T\join R}(c,a+1) \geq \card_T(b_1,b_2) = \card_{T\join R} (c,a),
\]
which finishes our proof for the join.

In remains to show that the meet $T\meet R$ is also an $s$-Tamari tree. Note that
Theorem~\ref{thm:lattice} does not give an explicit construction of the meet between two trees. 
However, by Remark~\ref{rem:meet}, the meet can be obtained by working on their mirror images. 
More precisely, if $\tilde T$ and $\tilde R$ are the mirror images of~$T$ and~$R$, respectively 
(this means $\card_{\tilde{T}}(b,a) = s(b) - \card_T(b,a)$ and the same for $R$),
then $T\meet R$ is the mirror image of $\tilde T \join \tilde R$.  
Therefore, proving that $T\meet R$ is an $s$-Tamari tree is equivalent to the following statement. If~$\tilde T$ and~$\tilde R$ are two trees such that $\card(c,b) \leq \card(c,a)$ for all $a < b <c$, then $\tilde T \join \tilde R$ also satisfies this property. 

We use a similar technique as for the join by proving the desired property for $a$ and $a+1$.
Consider a $(\tilde T,\tilde R)$-transitivity path $c=b_1>b_2>\dots>b_k=a+1$ between $c$ and $a+1$ where the maximal initial cardinality is attained: 
\[
\card_{\tilde T \join \tilde R}(c,a+1) =
\max\{\card_{\tilde T}(b_1,b_2),\card_{\tilde R}(b_1,b_2)\}.
\]
Since $a<b_2<b_1$, then by hypothesis 
\[
\card_{\tilde T}(b_1,b_2) \leq \card_{\tilde T}(b_1,a)
\]
\[
\card_{\tilde R}(b_1,b_2) \leq \card_{\tilde R}(b_1,a)
\]
Taking the maximum on both sides we get 
\begin{align*}
\card_{\tilde T \join \tilde R}(c,a+1) &\leq
\max\{ \card_{\tilde T}(b_1,a), \card_{\tilde R}(b_1,a) \} \\
& \leq 
 \card_{\tilde T\join \tilde R}(b_1,a) \\
 & = 
 \card_{\tilde T\join \tilde R}(c,a)
\end{align*}
This proves the desired property for $a$ and $a+1$, and finishes our proof.  
\end{proof}

\subsection{The $s$-Tamari lattice as a quotient lattice of the $s$-weak order}\label{sec_sTam_quotient}

The goal of this section is to describe the $s$-Tamari lattice defined above as a quotient lattice of the $s$-weak order. This can be done only when $s$ does not contain any zero (except on the first position which is irrelevant, or in the trivial case where all entries are equal to zero). Otherwise, we quickly run into counter examples: as shown on Figure~\ref{fig:quotient-counter-ex}, for $s = (0,0,1)$ the $s$-weak order is a square whereas the $s$-Tamari lattice is a chain of $3$ elements. The only non trivial quotient lattice of the square is a chain of $2$ elements, so the $s$-Tamari can not be obtained as a quotient lattice.

\begin{figure}[ht]
\begin{tabular}{ccc}
\scalebox{2}{\begin{tikzpicture}[every node/.style={inner sep = -.5pt}]
\node(tree0) at (0.000000000000000,0.000000000000000) {\scalebox{0.2}{
{ \newcommand{\nodea}{\node[draw,circle] (a) {$3$}
;}\newcommand{\nodeb}{\node[draw,circle] (b) {$2$}
;}\newcommand{\nodec}{\node[draw,circle] (c) {$1$}
;}\newcommand{\noded}{\node[draw,circle] (d) {$ $}
;}\newcommand{\nodee}{\node[draw,circle] (e) {$ $}
;}\begin{tikzpicture}[every node/.style={inner sep = 3pt}]
\matrix[column sep=.3cm, row sep=.3cm,ampersand replacement=\&]{
         \& \nodea  \&         \\ 
 \nodeb  \&         \& \nodee  \\ 
 \nodec  \&         \&         \\ 
 \noded  \&         \&         \\
};

\path[ultra thick, red] (c) edge (d)
	(b) edge (c)
	(a) edge (b) edge (e);
\end{tikzpicture}}
}};
\node(tree1) at (0.000000000000000,1.00000000000000) {\scalebox{0.2}{
{ \newcommand{\nodea}{\node[draw,circle] (a) {$3$}
;}\newcommand{\nodeb}{\node[draw,circle] (b) {$2$}
;}\newcommand{\nodec}{\node[draw,circle] (c) {$ $}
;}\newcommand{\noded}{\node[draw,circle] (d) {$1$}
;}\newcommand{\nodee}{\node[draw,circle] (e) {$ $}
;}\begin{tikzpicture}[every node/.style={inner sep = 3pt}]
\matrix[column sep=.3cm, row sep=.3cm,ampersand replacement=\&]{
         \& \nodea  \&         \\ 
 \nodeb  \&         \& \noded  \\ 
 \nodec  \&         \& \nodee  \\
};

\path[ultra thick, red] (b) edge (c)
	(d) edge (e)
	(a) edge (b) edge (d);
\end{tikzpicture}}
}};
\node[draw=red,circle,inner sep=-4pt](tree2) at (-0.866025403784439,0.500000000000000) {\scalebox{0.2}{
{ \newcommand{\nodea}{\node[draw,circle] (a) {$3$}
;}\newcommand{\nodeb}{\node[draw,circle] (b) {$1$}
;}\newcommand{\nodec}{\node[draw,circle] (c) {$ $}
;}\newcommand{\noded}{\node[draw,circle] (d) {$2$}
;}\newcommand{\nodee}{\node[draw,circle] (e) {$ $}
;}\begin{tikzpicture}[every node/.style={inner sep = 3pt}]
\matrix[column sep=.3cm, row sep=.3cm,ampersand replacement=\&]{
         \& \nodea  \&         \\ 
 \nodeb  \&         \& \noded  \\ 
 \nodec  \&         \& \nodee  \\
};

\path[ultra thick, red] (b) edge (c)
	(d) edge (e)
	(a) edge (b) edge (d);
\end{tikzpicture}}
}};
\node[draw=red,circle,inner sep=-4pt](tree3) at (-0.866025403784439,1.50000000000000) {\scalebox{0.2}{
{ \newcommand{\nodea}{\node[draw,circle] (a) {$3$}
;}\newcommand{\nodeb}{\node[draw,circle] (b) {$ $}
;}\newcommand{\nodec}{\node[draw,circle] (c) {$2$}
;}\newcommand{\noded}{\node[draw,circle] (d) {$1$}
;}\newcommand{\nodee}{\node[draw,circle] (e) {$ $}
;}\begin{tikzpicture}[every node/.style={inner sep = 3pt}]
\matrix[column sep=.3cm, row sep=.3cm,ampersand replacement=\&]{
         \& \nodea  \&         \\ 
 \nodeb  \&         \& \nodec  \\ 
         \&         \& \noded  \\ 
         \&         \& \nodee  \\
};

\path[ultra thick, red] (d) edge (e)
	(c) edge (d)
	(a) edge (b) edge (c);
\end{tikzpicture}}
}};

\draw (tree0) -- (tree1);
\draw (tree0) -- (tree2);
\draw (tree1) -- (tree3);
\draw (tree2) -- (tree3);

\draw[red] (0,0.5) ellipse (.4cm and .9cm);
\end{tikzpicture}} &
\scalebox{2}{\begin{tikzpicture}[every node/.style={inner sep = -.5pt}]
\node(tree0) at (0.000000000000000,0.000000000000000) {\scalebox{0.2}{
{ \newcommand{\nodea}{\node[draw,circle] (a) {$3$}
;}\newcommand{\nodeb}{\node[draw,circle] (b) {$2$}
;}\newcommand{\nodec}{\node[draw,circle] (c) {$1$}
;}\newcommand{\noded}{\node[draw,circle] (d) {$ $}
;}\newcommand{\nodee}{\node[draw,circle] (e) {$ $}
;}\begin{tikzpicture}[every node/.style={inner sep = 3pt}]
\matrix[column sep=.3cm, row sep=.3cm,ampersand replacement=\&]{
         \& \nodea  \&         \\ 
 \nodeb  \&         \& \nodee  \\ 
 \nodec  \&         \&         \\ 
 \noded  \&         \&         \\
};

\path[ultra thick, red] (c) edge (d)
	(b) edge (c)
	(a) edge (b) edge (e);
\end{tikzpicture}}
}};
\node(tree1) at (0.000000000000000,1.00000000000000) {\scalebox{0.2}{
{ \newcommand{\nodea}{\node[draw,circle] (a) {$3$}
;}\newcommand{\nodeb}{\node[draw,circle] (b) {$2$}
;}\newcommand{\nodec}{\node[draw,circle] (c) {$ $}
;}\newcommand{\noded}{\node[draw,circle] (d) {$1$}
;}\newcommand{\nodee}{\node[draw,circle] (e) {$ $}
;}\begin{tikzpicture}[every node/.style={inner sep = 3pt}]
\matrix[column sep=.3cm, row sep=.3cm,ampersand replacement=\&]{
         \& \nodea  \&         \\ 
 \nodeb  \&         \& \noded  \\ 
 \nodec  \&         \& \nodee  \\
};

\path[ultra thick, red] (b) edge (c)
	(d) edge (e)
	(a) edge (b) edge (d);
\end{tikzpicture}}
}};
\node(tree2) at (-0.866025403784439,0.500000000000000) {\scalebox{0.2}{
{ \newcommand{\nodea}{\node[draw,circle] (a) {$3$}
;}\newcommand{\nodeb}{\node[draw,circle] (b) {$1$}
;}\newcommand{\nodec}{\node[draw,circle] (c) {$ $}
;}\newcommand{\noded}{\node[draw,circle] (d) {$2$}
;}\newcommand{\nodee}{\node[draw,circle] (e) {$ $}
;}\begin{tikzpicture}[every node/.style={inner sep = 3pt}]
\matrix[column sep=.3cm, row sep=.3cm,ampersand replacement=\&]{
         \& \nodea  \&         \\ 
 \nodeb  \&         \& \noded  \\ 
 \nodec  \&         \& \nodee  \\
};

\path[ultra thick, red] (b) edge (c)
	(d) edge (e)
	(a) edge (b) edge (d);
\end{tikzpicture}}
}};
\node(tree3) at (-0.866025403784439,1.50000000000000) {\scalebox{0.2}{
{ \newcommand{\nodea}{\node[draw,circle] (a) {$3$}
;}\newcommand{\nodeb}{\node[draw,circle] (b) {$ $}
;}\newcommand{\nodec}{\node[draw,circle] (c) {$2$}
;}\newcommand{\noded}{\node[draw,circle] (d) {$1$}
;}\newcommand{\nodee}{\node[draw,circle] (e) {$ $}
;}\begin{tikzpicture}[every node/.style={inner sep = 3pt}]
\matrix[column sep=.3cm, row sep=.3cm,ampersand replacement=\&]{
         \& \nodea  \&         \\ 
 \nodeb  \&         \& \nodec  \\ 
         \&         \& \noded  \\ 
         \&         \& \nodee  \\
};

\path[ultra thick, red] (d) edge (e)
	(c) edge (d)
	(a) edge (b) edge (c);
\end{tikzpicture}}
}};

\draw (tree0) -- (tree1);
\draw (tree0) -- (tree2);
\draw (tree1) -- (tree3);
\draw (tree2) -- (tree3);

\draw[red] (0,0.5) ellipse (.4cm and .9cm);
\draw[red] (-0.866025403784439,1) ellipse (.4cm and .9cm);
\end{tikzpicture}} &
\qquad
\includegraphics[scale=2]{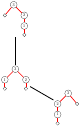} \\
$\pidown$ & $\piup$ & $s$-Tamari
\end{tabular}
\caption{The $s$-weak order with the equivalence relations $\pidown$ and $\piup$ and the $s$-Tamari lattice for $s = (0,0,1)$}
\label{fig:quotient-counter-ex}
\end{figure}

In the remaining of this section, we therefore work with a composition $s$ (without zeros). 
We briefly recall the definitions of lattice congruences and quotient lattices, and refer to~\cite{Reading-latticeCongruences,Reading-cambrianLattices} for more details.

\begin{definition}[Lattice congruence and quotient lattice, {\cite{Reading-latticeCongruences}}]
\label{def:lattice-congruence}
An equivalence relation $\alpha$ on a finite lattice $L$ is a \defn{lattice congruence} if the following conditions hold:
\begin{enumerate}[(i)]
\item Each equivalence class is an interval in $L$.
\item The map $\pidown^\alpha$ mapping each element to the bottom element of its equivalence class is order-preserving.
\item The map $\piup^\alpha$ mapping each element to the top element of its equivalence class is order-preserving.
\end{enumerate}
In particular, $\alpha$ is compatible with meets and joins: if $x,x'$ are in the same class and $y,y'$ are in the same class, then $x\meet y$ and $x'\meet y'$ (resp. $x\join y$ and $x'\join y'$) are in the same class.

The \defn{quotient} $L/\alpha$ is the poset on the equivalence classes of $\alpha$, 
where the order relation is defined by~$X\wole Y$ in $L/\alpha$ if there are representatives $x\in X$ and $y\in Y$ such that $x\wole y$ in $L$. This quotient inherits a lattice structure where the meet $X\meet Y$ (resp. the join $X\join Y$) of two congruence classes $X$ and $Y$ is the congruence class of $x \meet y$ (resp. of $x \join y$), for arbitrary representatives $x \in X$ and $y \in Y$.

Note that the quotient lattice $L/\alpha$ is isomorphic to the subposet of $L$ induced by $\pidown^\alpha(L)$ (or equivalently by $\piup^\alpha(L)$). This subposet is not necessarily a sublattice of $L$ in general.
\end{definition}

In our case, it is actually more convenient to directly define the equivalence relation in terms of the projection~$\pidown$.

\begin{proposition}
\label{prop:pidown}
For an $s$-decreasing tree $T$, the multi inversion set defined by 
\[
\card_{Q}(c,a) = \min\left( \card_T(c,b); a \leq b < c \right)
\]
for $c > a$, is an $s$-tree-inversion set. 
Abusing of notation, we define $\pidown(T) = Q$ to be the corresponding $s$-decreasing tree. 
Then, $Q$ is an $s$-Tamari tree and $Q \wole T$ in the $s$-weak order .
\end{proposition}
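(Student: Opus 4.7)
The plan is to verify, in order, four properties of the multi inversion set $Q$ defined in the statement: (i) that $Q\subseteq \maxs_s$, so that the definition is well-posed; (ii) that $Q$ satisfies the transitivity and planarity axioms of Definition~\ref{def:tree-inversion-set}, so that it corresponds to an $s$-decreasing tree via Proposition~\ref{prop:tree-inversions-bij}; (iii) that this tree is an $s$-Tamari tree in the sense of Definition~\ref{def:s-tam-tree}; and (iv) that $Q\wole T$ in the $s$-weak order. Property (i) is immediate, since taking $b=a$ in the minimum defining $\card_Q(c,a)$ gives $\card_Q(c,a)\leq \card_T(c,a)\leq s(c)$; the same inequality shows $Q\subseteq \inv(T)$, establishing (iv).

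The central observation driving the remaining verifications is the following monotonicity property: for all $a\leq b<c$,
\[
\card_Q(c,a) \;\leq\; \card_Q(c,b),
\]
which holds simply because the minimum defining $\card_Q(c,b)$ ranges over a subset of the indexing set used for $\card_Q(c,a)$. Specialised to $a<b<c$, this is exactly the condition in Definition~\ref{def:s-tam-tree}, giving (iii). For the planarity axiom in (ii), if $\card_Q(c,a)=i$ then monotonicity yields $\card_Q(c,b)\geq i$ for every $a<b<c$, so the second alternative of planarity is always satisfied.

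The one genuinely nontrivial step is transitivity, which I would handle by contrapositive. Suppose $a<b<c$, $\card_Q(c,b)=i>0$ and $\card_Q(c,a)<i$; the goal is to show $\card_Q(b,a)=0$. By definition of $\card_Q(c,a)$ there exists an index $b'$ with $a\leq b'<c$ and $\card_T(c,b')<i$; the monotonicity just proved forces $b'<b$, since any $b'\in[b,c)$ would satisfy $\card_T(c,b')\geq \card_Q(c,b)=i$. On the other hand, from $\card_Q(c,b)=i$ we obtain $\card_T(c,b)\geq i>0$. Applying the transitivity axiom of the tree-inversion set $\inv(T)$ to the triple $b'<b<c$, together with $\card_T(c,b)\geq i>0$, gives $\card_T(b,b')=0$ or $\card_T(c,b')\geq \card_T(c,b)\geq i$; the second option contradicts $\card_T(c,b')<i$, so $\card_T(b,b')=0$. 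Since $a\leq b'<b$, the definition of $\card_Q(b,a)$ as a minimum over $[a,b)$ then forces $\card_Q(b,a)=0$, as desired.

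The main obstacle is precisely this transitivity step; it is the only place where the transitivity of $\inv(T)$ is invoked, and it hinges on correctly identifying the witnessing index $b'$ in order to transfer information from $T$ to $Q$. Everything else reduces to the trivial observation that a minimum over a smaller range can only increase.
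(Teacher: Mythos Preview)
Your proof is correct and follows essentially the same approach as the paper. Both arguments hinge on the monotonicity $\card_Q(c,a)\leq\card_Q(c,b)$ (which immediately handles planarity and the $s$-Tamari property) and on the same transitivity step: locate an index $b'$ in $[a,b)$ with $\card_T(c,b')<\card_T(c,b)$, then invoke transitivity of $\inv(T)$ on $b'<b<c$ to force $\card_T(b,b')=0$ and hence $\card_Q(b,a)=0$. Your contrapositive packaging is slightly cleaner than the paper's case split on where the minimum is attained, but the content is identical.
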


An example is illustrated in Figure~\ref{fig:pidownup}. In this example, we obtain $\card_{Q}(5,2) = 0$ because $\card_T(5,3) = 0$. Similarly, we get $\card_Q(5,1) = 0$ and $\card_Q(4,1) = 0$.

\begin{proof}
First we show that the multi inversion set is an $s$-tree-inversion set as defined in Definition~\ref{def:tree-inversion-set}. For this we need to verify the transitivity and planarity conditions. 

Let us check the transitivity. Let $a < b < c$ such that 
\begin{equation}\label{eq_proof_projectingdown_one}
\card_{Q}(c,b) = 
\min(\card_T(c,b'); b \leq b' < c) = i.
\end{equation}
Now 
\begin{equation}\label{eq_proof_projectingdown_two}
\card_{Q}(c,a) = \min(\card_T(c,a'); a \leq a' < c).
\end{equation}
We consider two possible cases.
If the minimal in Equation~\eqref{eq_proof_projectingdown_two} is obtained only with $a' < b$,
then we have that $\card_T(c,a') < \card_T(c,b)$ which implies~\mbox{$\card_T(b,a') = 0$} by transitivity on $T$. This gives that $\card_{Q}(b,a) = \min(\card_{T}(b,a'); a \leq a' < b) = 0$, and so the transitivity condition for $Q$ is satisfied in this case. 
For the second case, assume that the minimal in Equation~\eqref{eq_proof_projectingdown_two} is obtained for some $a' \geq b$, then $\card_{Q}(c,a) = \min(\card_T(c,a'); b \leq a' < c) = \card_{Q}(c,b)$. So the multi inversion is transitive in this case as well. 

The planarity condition follows from $\card_{Q}(c,a) \leq \card_{Q}(c,b)$, which is obtained directly by comparing Equations~\eqref{eq_proof_projectingdown_one} and~\eqref{eq_proof_projectingdown_two}. Since the transitivity and planarity conditions are satisfied, the multi inversion set is an $s$-tree inversion set. Thus, it is the tree inversion set of an $s$-decreasing tree $Q$ by Proposition~\ref{prop:tree-inversions-bij}.  

The fact that $\card_{Q}(c,a) \leq \card_{Q}(c,b)$ also proves that $Q$ is an $s$-Tamari tree. Furthermore, since $\card_{Q}(c,a) \leq \card_T(c,a)$ by definition, then $Q \wole T$.
\end{proof}

\begin{figure}[ht]
\begin{tabular}{cc}
\multicolumn{2}{c}{$T$} \\
\multicolumn{2}{c}{
{ \newcommand{\nodea}{\node[draw,circle] (a) {$5$}
;}\newcommand{\nodeb}{\node[draw,circle] (b) {$3$}
;}\newcommand{\nodec}{\node[draw,circle] (c) {$$}
;}\newcommand{\noded}{\node[draw,circle] (d) {$$}
;}\newcommand{\nodee}{\node[draw,circle] (e) {$4$}
;}\newcommand{\nodef}{\node[draw,circle] (f) {$2$}
;}\newcommand{\nodeg}{\node[draw,circle] (g) {$$}
;}\newcommand{\nodeh}{\node[draw,circle] (h) {$$}
;}\newcommand{\nodei}{\node[draw,circle] (i) {$1$}
;}\newcommand{\nodej}{\node[draw,circle] (j) {$$}
;}\newcommand{\nodeba}{\node[draw,circle] (ba) {$$}
;}\begin{tikzpicture}[auto]
\matrix[column sep=.3cm, row sep=.3cm,ampersand replacement=\&]{
         \&         \&         \&         \&         \&         \& \nodea  \&         \&         \\ 
         \& \nodeb  \&         \&         \&         \&         \& \nodee  \&         \& \nodeba \\ 
 \nodec  \&         \& \noded  \&         \& \nodef  \&         \&         \& \nodei  \&         \\ 
         \&         \&         \& \nodeg  \&         \& \nodeh  \&         \& \nodej  \&         \\
};

\path[ultra thick, red] (b) edge (c) edge (d)
	(f) edge (g) edge (h)
	(i) edge (j)
	(e) edge (f) edge (i)
	(a) edge (b) edge (e) edge (ba);
\end{tikzpicture}}
}
\\
$Q = \pidown(T)$ & $R = \piup(T)$ \\
{ \newcommand{\nodea}{\node[draw,circle] (a) {$5$}
;}\newcommand{\nodeb}{\node[draw,circle] (b) {$3$}
;}\newcommand{\nodec}{\node[draw,circle] (c) {$$}
;}\newcommand{\noded}{\node[draw,circle] (d) {$2$}
;}\newcommand{\nodee}{\node[draw,circle] (e) {$$}
;}\newcommand{\nodef}{\node[draw,circle] (f) {$1$}
;}\newcommand{\nodeg}{\node[draw,circle] (g) {$$}
;}\newcommand{\nodeh}{\node[draw,circle] (h) {$4$}
;}\newcommand{\nodei}{\node[draw,circle] (i) {$$}
;}\newcommand{\nodej}{\node[draw,circle] (j) {$$}
;}\newcommand{\nodeba}{\node[draw,circle] (ba) {$$}
;}\begin{tikzpicture}[auto]
\matrix[column sep=.3cm, row sep=.3cm,ampersand replacement=\&]{
         \&         \&         \&         \&         \&         \& \nodea  \&         \&         \\ 
         \& \nodeb  \&         \&         \&         \&         \& \nodeh  \&         \& \nodeba \\ 
 \nodec  \&         \&         \& \noded  \&         \& \nodei  \&         \& \nodej  \&         \\ 
         \&         \& \nodee  \&         \& \nodef  \&         \&         \&         \&         \\ 
         \&         \&         \&         \& \nodeg  \&         \&         \&         \&         \\
};

\path[ultra thick, red] (f) edge (g)
	(d) edge (e) edge (f)
	(b) edge (c) edge (d)
	(h) edge (i) edge (j)
	(a) edge (b) edge (h) edge (ba);
\end{tikzpicture}}
&
{ \newcommand{\nodea}{\node[draw,circle] (a) {$5$}
;}\newcommand{\nodeb}{\node[draw,circle] (b) {$3$}
;}\newcommand{\nodec}{\node[draw,circle] (c) {$$}
;}\newcommand{\noded}{\node[draw,circle] (d) {$$}
;}\newcommand{\nodee}{\node[draw,circle] (e) {$4$}
;}\newcommand{\nodef}{\node[draw,circle] (f) {$$}
;}\newcommand{\nodeg}{\node[draw,circle] (g) {$$}
;}\newcommand{\nodeh}{\node[draw,circle] (h) {$2$}
;}\newcommand{\nodei}{\node[draw,circle] (i) {$$}
;}\newcommand{\nodej}{\node[draw,circle] (j) {$1$}
;}\newcommand{\nodeba}{\node[draw,circle] (ba) {$$}
;}\begin{tikzpicture}[auto]
\matrix[column sep=.3cm, row sep=.3cm,ampersand replacement=\&]{
         \&         \&         \&         \& \nodea  \&         \&         \&         \&         \\ 
         \& \nodeb  \&         \&         \& \nodee  \&         \&         \& \nodeh  \&         \\ 
 \nodec  \&         \& \noded  \& \nodef  \&         \& \nodeg  \& \nodei  \&         \& \nodej  \\ 
         \&         \&         \&         \&         \&         \&         \&         \& \nodeba \\
};

\path[ultra thick, red] (b) edge (c) edge (d)
	(e) edge (f) edge (g)
	(j) edge (ba)
	(h) edge (i) edge (j)
	(a) edge (b) edge (e) edge (h);
\end{tikzpicture}}
\end{tabular}
\caption{A tree with its two projections $\pidown$ and $\piup$.}
\label{fig:pidownup}
\end{figure}

\begin{lemma}
Let $T$ be an $s$-decreasing tree, then $\pidown(\pidown(T)) = \pidown(T)$. In other words, $\pidown$ is indeed a projection. In particular, if $T$ is an $s$-Tamari tree, $\pidown(T) = T$.
\end{lemma}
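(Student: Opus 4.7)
The plan is to reduce the idempotency $\pidown(\pidown(T)) = \pidown(T)$ to the second assertion of the lemma, namely that $\pidown$ is the identity on $s$-Tamari trees. Indeed, by Proposition~\ref{prop:pidown}, the tree $Q := \pidown(T)$ is already an $s$-Tamari tree, so once we know $\pidown$ fixes every $s$-Tamari tree we immediately conclude $\pidown(\pidown(T)) = \pidown(Q) = Q = \pidown(T)$.

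Thus the real content is to verify that $\pidown(T) = T$ whenever $T$ is an $s$-Tamari tree. This is a one-line unpacking of the definitions. Recall that
\[
\card_{\pidown(T)}(c,a) = \min\bigl(\card_T(c,b) : a \leq b < c\bigr).
\]
If $T$ is an $s$-Tamari tree then, by Definition~\ref{def:s-tam-tree}, the sequence $b \mapsto \card_T(c,b)$ is weakly increasing in $b$ for $b < c$. Therefore the minimum over $a \leq b < c$ is attained at $b = a$, giving $\card_{\pidown(T)}(c,a) = \card_T(c,a)$ for every $a < c$. By Proposition~\ref{prop:tree-inversions-bij} (bijectivity of $T \mapsto \inv(T)$) this equality of inversion multisets yields $\pidown(T) = T$.

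I do not expect any obstacle: everything follows by directly combining Proposition~\ref{prop:pidown} with the defining inequality of $s$-Tamari trees. The only point one should be careful about is to invoke Proposition~\ref{prop:tree-inversions-bij} to pass from the equality of the tree-inversion multisets to the equality of the trees themselves.
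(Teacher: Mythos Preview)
Your proof is correct and follows exactly the same approach as the paper: reduce the idempotency to the claim that $\pidown$ fixes $s$-Tamari trees (via Proposition~\ref{prop:pidown}), and then deduce that claim from Definition~\ref{def:s-tam-tree}. You simply spell out in detail what the paper leaves as ``follows immediately from Definition~\ref{def:s-tam-tree}.''
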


\begin{proof}
We proved in Proposition~\ref{prop:pidown} that the image of $\pidown$ is an $s$-Tamari tree. 
Therefore, it suffices to show that $\pidown(T) = T$ when $T$ is an $s$-Tamari tree.
This follows immediately from Definition~\ref{def:s-tam-tree}. 
\end{proof}

\begin{definition}
Two $s$-decreasing trees $T$ and $T'$ are \defn{$\pidown$-equivalent}, in which case we write~$T \epidown T'$, if and only if $\pidown(T) = \pidown(T')$. The classes of this equivalent relation are called the \defn{$s$-Tamari classes}.
\end{definition}

The relation $\epidown$ is trivially an equivalence relation, and our goal is to prove that it is a lattice congruence. Some examples are shown on Figure~\ref{fig:s-tamari-class}. You can check in particular that each class forms an interval.

\begin{figure}[ht]
\begin{tabular}{cc}
\includegraphics[scale=.5]{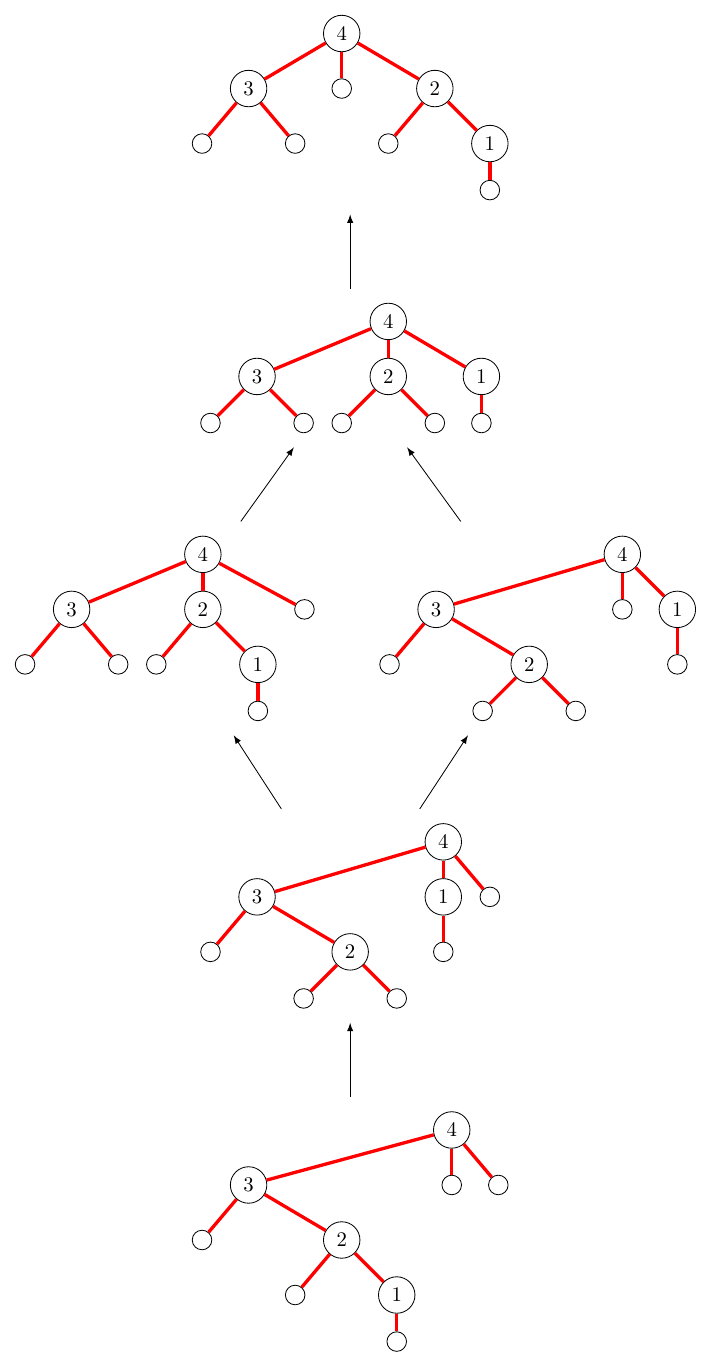}
&
\scalebox{1.5}{\input{figures/sweak_quotient.tex}}
\end{tabular}
\caption{An $s$-Tamari class on the left and all classes for $s = (0,2,2)$ on the right.}
\label{fig:s-tamari-class}
\end{figure}

The following definition will allow us to present an alternative description of the $s$-Tamari classes.

\begin{definition}
\label{def:s-tam-congruence-rotation}
Let $T$ be a $s$-decreasing tree for a composition $s$ (without zeros), and let $(a,c)$ be a tree-ascent of $T$. We say that $(a,c)$ is an \defn{$s$-congruence} tree-ascent if and only if there exists $b$ with $a < b < c$ and $\card_T(b,a) = s(b)$. The corresponding $s$-tree rotation is called an \defn{$s$-congruence rotation}.
\end{definition} 

The following proposition shows that the $s$-tree rotations that preserve the class are exactly the $s$-congruence rotations. 

\begin{proposition}
\label{prop:s-tam-congruence-rotation}
Let $s$ be a composition (without zeros) and $T \wole T'$ be a cover relation of the $s$-weak order. 
Then, $T \epidown T'$ if and only if $T'$ is obtained by applying an $s$-congruence rotation on $T$. Moreover, if the cover relation is not an $s$-congruence rotation, then $\pidown(T) \woless \pidown(T')$.
\end{proposition}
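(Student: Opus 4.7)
The proof splits into the two implications of the equivalence, together with the strict comparison claimed in the ``moreover'' part. The main tool in both directions is Lemma~\ref{lem_tree_rotation_inversions}: rotating the tree-ascent $(a,c)$ only increments $\card(c,x)$ by one for $x=a$ and for $x$ a non-left descendant of $a$ in $T$, and leaves every other inversion unchanged. Since all modified indices $x$ satisfy $x \leq a$, only the row ``$c$'' of the inversion set can change and it suffices, for each $a'' \leq a$, to compare
\[
m_T(a'') := \min_{a'' \leq b' < c} \card_T(c,b')
\quad\text{and}\quad
m_{T'}(a'') := \min_{a'' \leq b' < c} \card_{T'}(c,b').
\]
Throughout, I will write $i := \card_T(c,a) < s(c)$.

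For the sufficient direction ($\Leftarrow$), the plan is to first establish that any $s$-congruence witness $b \in (a,c)$ with $\card_T(b,a)=s(b)$ automatically satisfies $\card_T(c,b) \leq i$. Indeed, if $b$ is an ancestor of $a$ it lies in the same subtree $T_i^c$ as $a$, giving equality; otherwise, since $s(b)>0$, the condition forces $a$ to be right of $b$, and examining the common ancestor of $a$ and $b$ places $b$ in a subtree of $c$ strictly to the left of $a$'s (or makes $b$ left of $c$), yielding $\card_T(c,b) < i$. Using this, I would argue that for every $a'' \leq a$, either $m_T(a'')$ is attained at some $b^\star \in [a'',c)$ outside $\{a\} \cup \{$non-left descendants of $a\}$ (so $\card_{T'}(c,b^\star)=\card_T(c,b^\star)=m_T(a'')$ and the two minima agree), or the only achievers are $a$ and its non-left descendants, all of value $i$; but then the witness $b>a$ satisfies $\card_T(c,b) \leq i = m_T(a'')$, hence must itself achieve the minimum at an unchanged value, again forcing $m_{T'}(a'')=m_T(a'')$.

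For the necessary direction and strictness ($\Rightarrow$), I would assume $(a,c)$ is not an $s$-congruence, so $\card_T(b,a) < s(b)$ for every $b \in (a,c)$. First, condition~\eqref{cond:tree-ascent-middle} of Definition~\ref{def:tree-ascent} implies that any proper ancestor of $a$ with label in $(a,c)$ would force $\card(\cdot,a)=s(\cdot)$, which is now excluded, so $c$ is the parent of $a$ in $T$. Next, for any $b \in (a,c)$, the failure $\card_T(b,a)<s(b)$ together with $s(b)>0$ rules out both ``$a$ right of $b$'' and ``$b$ ancestor of $a$'', leaving only ``$a$ strictly left of $b$''. A short case split on whether $b$ is a descendant of $c$ then gives $\card_T(c,b) > i$: if $b$ is a descendant of $c$, the absence of intermediate ancestors of $a$ forces the common ancestor of $a$ and $b$ to equal $c$, so $b$ lies in a strictly more-right subtree of $c$ than $a$; and if $b$ is not a descendant of $c$, then $b$ must lie to the right of $c$, giving $\card_T(c,b)=s(c)>i$. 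Plugging $a''=a$ into the formulas yields $m_T(a)=i$ (attained at $b'=a$) while $m_{T'}(a) \geq i+1$, so $\card_{\pidown(T)}(c,a) < \card_{\pidown(T')}(c,a)$. Combining with the trivial entrywise inequality $\card_{\pidown(T)} \leq \card_{\pidown(T')}$ inherited from $\inv(T) \subseteq \inv(T')$ then gives $\pidown(T) \woless \pidown(T')$.

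The main obstacle is the positional case analysis in the ``$\Rightarrow$'' direction: establishing the strict inequality $\card_T(c,b) > i$ for every $b \in (a,c)$ requires carefully combining the failure of $s$-congruence with tree-ascent condition~\eqref{cond:tree-ascent-middle} to rule out the possibility that the common ancestor of $a$ and $b$ lies strictly between $c$ and $a$ in the tree.
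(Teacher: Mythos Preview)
Your proof is correct and follows essentially the same approach as the paper: analyze how $\min_{a''\le b'<c}\card(c,b')$ changes under the rotation, using Lemma~\ref{lem_tree_rotation_inversions} to localize the modification to pairs $(c,a')$ with $a'\le a$. One small imprecision: in the $\Leftarrow$ direction you claim that when $b$ is not an ancestor of $a$, the common ancestor of $a$ and $b$ forces $\card_T(c,b)<i$, but in fact if that common ancestor is a node $z$ strictly between $a$ and $c$ in the tree (which can happen here, since in this direction $c$ need not be the parent of $a$), then $b\in T_i^c$ and $\card_T(c,b)=i$; similarly $\card_T(c,b)=0=i$ is possible when $b$ is left of $c$ and $i=0$. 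This does not harm your argument, since you only use $\card_T(c,b)\le i$, and that weaker inequality is immediate from transitivity on $a<b<c$ (since $\card_T(b,a)=s(b)>0$ forces $\card_T(c,a)\ge\card_T(c,b)$), which is how the paper obtains it. In the $\Rightarrow$ direction your argument is in fact slightly more careful than the paper's: you establish the \emph{strict} inequality $\card_T(c,b)>i$ (via $c$ being the parent of $a$), which is exactly what is needed to conclude $\card_{\pidown(T')}(c,a)=i+1$.
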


You can check on Figure~\ref{fig:s-tamari-class} that each cover relation satisfies this condition.

\begin{proof}
Let $T$ be an $s$-decreasing tree with a tree-ascent $(a,c)$, and $T'$ the result of the corresponding $s$-tree-rotation. We write $Q = \pidown(T)$ and $Q' = \pidown(T')$. Let $A$ be the set containing the node $a$ and all its middle descendants. In particular $a' \leq a$ for all $a' \in A$. Remember by Lemma~\ref{lem_tree_rotation_inversions} that the only increased cardinalities in $T'$ are those of $(c,a')$ where $a' \in A$. 

Assume that $(a,c)$ is an $s$-congruence tree-ascent. This means that there exists $b$ with $a < b < c$ and  $\card_T(b,a) = s(b)$. Because $s(b) > 0$, by transitivity this gives $\card_T(c,a') \geq \card_T(c,b)$ for all $a' \in A$ (all elements of $A$ are right of~$b$). Thus, the minimal cardinality computed by $\pidown$ is not obtained through the elements of $A$, \emph{i.e.} for all $a'' \leq a$ (whether they are in $A$ or not), $\card_Q(c,a'') = \min(\card_T(c,b'); b' \notin A; a'' \leq b' \leq c)$. This is also true in $T'$ and we get $\card_{Q'}(c,a'') = \min(\card_{T'}(c,b'); b' \notin A; a'' \leq b' \leq c)$. These two expressions are equal because for all $b' \notin A$, we have $\card_T(c,b') = \card_{T'}(c,b')$. As these are the only changes between $T$ and $T'$, we obtain that $Q = Q'$.

Now suppose on the other hand that $(a,c)$ is not an $s$-congruence tree-ascent. This means that there is no $b$ with $a < b < c$ such that $\card_T(b,a) = s(b)$. In other words, for all $b$ with $a < b < c$, $a$ is left of $b$ and we have $\card_T(c,a) \leq \card_T(c,b)$ (this is true only because $s(b) > 0)$. This implies that $\card_Q(c,a) = \card_T(c,a)$. In $T'$, we have $\card_{T'}(c,a') = \card_T(c,a') + 1$ for all $a' \in A$. This includes $a$ and only elements smaller than~$a$. In particular, $\card_{T'}(c,b) = \card_T(c,b)$ and $\card_{T'}(b,a) = \card_T(b,a)$ for all $b$ with $a < b < c$. We get that $\card_{Q'}(c,a) = \card_{T'}(c,a) = \card_Q(c,a) + 1$. So~$Q \neq Q'$. Furthermore, the cardinality of $(c,a)$ has strictly increased, and for all $a'' \leq a$, we have 
$$\card_{Q'}(c,a'') = \min( \card_{T'}(c,b'); a'' \leq b' < c) \geq \min( \card_{T}(c,b'); a'' \leq b' < c) = \card_Q(c,a'').$$ 
Moreover, all other cardinalities are unchanged, and so $Q \woless Q'$.
\end{proof}

\begin{remark}
Note that this proof does not work when $s(b)$ can be equal to zero, more precisely whenever~$s$ contains a zero outside its extremal positions. In this case, there will be $s$-congruence rotations between elements of different $\epidown$ classes, as in Figure~\ref{fig:quotient-counter-ex}: the tree-ascent $(1,3)_0$ is always an $s$-congruence tree-ascent because $\card(2,1) = 0 = s(2)$ even when $1$ is left of $2$. 
\end{remark}

\begin{corollary}
\label{cor:pidown-order-pres}
Let $s$ be a composition (without zeros), then $\pidown$ is order preserving.
\end{corollary}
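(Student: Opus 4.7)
My plan is to deduce the corollary as an immediate consequence of Proposition~\ref{prop:s-tam-congruence-rotation}, which already handles the analysis one cover at a time.

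First, I would reduce the claim to cover relations. Let $T\wole R$ in the $s$-weak order. Since the $s$-weak order is a finite poset in which any order relation is a chain of cover relations, I can choose a maximal chain
\[
T = T_0 \wocover T_1 \wocover \cdots \wocover T_k = R
\]
in the $s$-weak order. It then suffices to show that $\pidown(T_i) \wole \pidown(T_{i+1})$ for each $i$, since the $s$-weak order is transitive and the restriction to $s$-Tamari trees is the $s$-Tamari poset.

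Next, I would invoke Proposition~\ref{prop:s-tam-congruence-rotation} for each cover $T_i \wocover T_{i+1}$, which is by Theorem~\ref{thm:cover-relations} an $s$-tree rotation along some tree-ascent. There are exactly two cases: either the rotation is an $s$-congruence rotation, in which case $T_i \epidown T_{i+1}$ and hence $\pidown(T_i)=\pidown(T_{i+1})$; or it is not, in which case the proposition yields the strict inequality $\pidown(T_i) \woless \pidown(T_{i+1})$. In either case $\pidown(T_i) \wole \pidown(T_{i+1})$.

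Composing these inequalities along the chain gives $\pidown(T) \wole \pidown(R)$, which is the desired monotonicity of $\pidown$. There is no real obstacle here: the content is already contained in Proposition~\ref{prop:s-tam-congruence-rotation}, and the only thing left is the packaging argument above, which works because every comparison in a finite lattice can be refined into a sequence of covers.
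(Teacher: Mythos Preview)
Your proposal is correct and follows essentially the same argument as the paper: reduce to cover relations via a maximal chain and apply Proposition~\ref{prop:s-tam-congruence-rotation} at each step to get $\pidown(T_i)\wole\pidown(T_{i+1})$, then compose. The paper's proof is just a more compressed version of exactly this reasoning.
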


\begin{proof}
Proposition~\ref{prop:s-tam-congruence-rotation} states that if $T \wole T'$ is a cover relation of the $s$-weak order, then $\pidown(T) \wole \pidown(T')$ (the equality depending on whether or not the rotation is an $s$-congruence rotation). As this is true on cover relations, it can be extended to any $T \wole T'$. 
\end{proof}

We say that a subset $X$ of the $s$-weak order is \defn{closed by interval} if $T,T''\in X$ implies that $T'\in X$ for every $T \wole T' \wole T''$.

\begin{corollary}
\label{cor:s-tam-interval-closes}
Let $s$ be a composition (without zeros), the $s$-Tamari classes are closed by interval. 
\end{corollary}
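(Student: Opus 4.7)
The plan is to deduce this corollary directly from Corollary~\ref{cor:pidown-order-pres} together with the definition of the equivalence relation $\epidown$ via the projection $\pidown$. I would not need any new construction: the proof is essentially a one-line argument about order-preserving maps between two congruent endpoints.

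More precisely, I would start by unpacking the statement. Suppose $T$ and $T''$ are two $s$-decreasing trees lying in the same $s$-Tamari class, and let $T'$ be any $s$-decreasing tree with $T \wole T' \wole T''$. The goal is to show $T' \epidown T$, i.e.\ $\pidown(T')=\pidown(T)$.

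The key step is to invoke Corollary~\ref{cor:pidown-order-pres}: since $\pidown$ is order-preserving on the $s$-weak order, the chain $T \wole T' \wole T''$ maps to
\[
\pidown(T) \;\wole\; \pidown(T') \;\wole\; \pidown(T'').
\]
By hypothesis $T \epidown T''$, so $\pidown(T) = \pidown(T'')$, which forces the two inequalities to be equalities. Therefore $\pidown(T')=\pidown(T)$, which means $T' \epidown T$ and establishes that the $\epidown$-class is closed by interval.

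There is no significant obstacle; the corollary is essentially bookkeeping once the nontrivial fact (order-preservation of $\pidown$, Corollary~\ref{cor:pidown-order-pres}) has been established. The only subtlety is that $\pidown$ is defined on $s$-decreasing trees for a composition $s$ without zeros, so the statement and its proof inherit this restriction from Proposition~\ref{prop:s-tam-congruence-rotation}; for signatures containing interior zeros, the counterexample of Figure~\ref{fig:quotient-counter-ex} already shows that such an argument cannot hold in general.
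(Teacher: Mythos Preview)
Your proof is correct and essentially identical to the paper's own argument: both apply the order-preservation of $\pidown$ (Corollary~\ref{cor:pidown-order-pres}) to a chain $T \wole T' \wole T''$ with $\pidown(T)=\pidown(T'')$ and conclude $\pidown(T')=\pidown(T)$ by antisymmetry.
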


\begin{proof}
Suppose that we have three $s$-decreasing trees $T \wole T' \wole T''$ such that $\pidown(T) = \pidown(T'') = Q$. Since $\pidown$ is order preserving, we get $Q \wole \pidown(T') \wole Q$, and so $\pidown(T') = Q$.
\end{proof}

\begin{corollary}
\label{cor:s-tam-equiv-equiv}
Let $s$ be a composition (without zeros), and $\equiv$ be the equivalence relation on $s$-decreasing trees defined by: $T \equiv T'$ if and only if $T$ and $T'$ can be connected by a sequence of $s$-congruence rotations. Then $\equiv$ is the same equivalence relation than $\epidown$. 
\end{corollary}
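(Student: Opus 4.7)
The two relations $\equiv$ and $\epidown$ should be proven equal by double containment. The forward direction $\equiv \,\subseteq\, \epidown$ is essentially immediate: Proposition~\ref{prop:s-tam-congruence-rotation} says that a single $s$-congruence rotation $T \wocover T'$ satisfies $\pidown(T) = \pidown(T')$, and by transitivity of equality this extends to any finite sequence of such rotations (including their inverses), so $T \equiv T'$ forces $\pidown(T) = \pidown(T')$, i.e.\ $T \epidown T'$.

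The reverse direction $\epidown \,\subseteq\, \equiv$ is the substantive step. The strategy is to show that every $s$-decreasing tree $T$ is $\equiv$-connected to its projection $\pidown(T)$; transitivity then handles arbitrary $T \epidown T'$ by routing through the common value $\pidown(T) = \pidown(T')$. So fix $T$, set $Q := \pidown(T)$, and note $Q \wole T$ by Proposition~\ref{prop:pidown}. Choose any saturated chain
\[
Q = T_0 \wocover T_1 \wocover \cdots \wocover T_k = T
\]
in the $s$-weak order. For each $i$, the intermediate tree $T_i$ lies in the interval $[Q,T]$, and since both endpoints satisfy $\pidown(Q) = Q = \pidown(T)$, Corollary~\ref{cor:s-tam-interval-closes} gives $\pidown(T_i) = Q$, so $T_{i-1} \epidown T_i$. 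Applying Proposition~\ref{prop:s-tam-congruence-rotation} to each cover relation $T_{i-1} \wocover T_i$ — precisely, the ``if and only if'' there says that a cover preserving the $\epidown$-class is exactly an $s$-congruence rotation — shows every step in the chain is an $s$-congruence rotation. Hence $T \equiv Q$.

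Now for $T \epidown T'$, the argument above gives $T \equiv \pidown(T) = \pidown(T') \equiv T'$, and concatenating the two $s$-congruence rotation paths yields $T \equiv T'$, completing the proof.

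\textbf{Main obstacle.} There is no real technical obstacle here: the corollary is essentially a packaging of the earlier results. The only subtle point is to notice that Corollary~\ref{cor:s-tam-interval-closes} is strong enough to guarantee that an \emph{arbitrary} saturated chain from $\pidown(T)$ up to $T$ remains inside the class, which is what converts the statement ``cover relations preserving the class are $s$-congruence rotations'' into ``the class is connected under $s$-congruence rotations''. Once that observation is in place the proof is a short formal manipulation.
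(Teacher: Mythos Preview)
Your proof is correct and follows essentially the same approach as the paper: both directions use Proposition~\ref{prop:s-tam-congruence-rotation}, and for the reverse inclusion you connect $T$ and $T'$ to their common projection $Q=\pidown(T)=\pidown(T')$ via saturated chains in $[Q,T]$ and $[Q,T']$, invoking Corollary~\ref{cor:s-tam-interval-closes} to keep the chains inside the class and then Proposition~\ref{prop:s-tam-congruence-rotation} to identify each cover as an $s$-congruence rotation. The paper's argument is structurally identical.
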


\begin{proof}
By Proposition~\ref{prop:s-tam-congruence-rotation},  $s$-congruence rotations preserve the classes of $\epidown$. 
As a consequence, the equivalence~$T \equiv T'$ implies $T \epidown T'$. 
Now assume that $T \epidown T'$, we need to show that $T \equiv T'$. 
Consider the $s$-Tamari tree $Q=\pidown(T)=\pidown(T')$ obtained by projecting down $T$ and $T'$. This is the minimal element in the class of $\epidown$ that contains $T$ and $T'$. 
By Corollary~\ref{cor:s-tam-interval-closes}, the intervals $[Q,T]$ and~$[Q,T']$ also contained in this class, 
and each covering relation inside one of these intervals corresponds to an $s$-congruence rotation by Proposition~\ref{prop:s-tam-congruence-rotation}.
Thus, $Q$ is connected to both $T$ and $T'$ by a sequence of $s$-congruence rotations, and so  $T \equiv T'$. 
\end{proof}

\begin{remark}
Note that in the case of permutations ($s = (1,1,\dots)$), the equivalence relation $\equiv$ is a slight deformation of the \defn{sylvester congruence} defined in~\cite[Definition 8]{HNT05}. More precisely, it is the transitive closure of the relation $\dots b \dots ac \dots \equiv \dots b \dots ca \dots$ with $a < b < c$. This small transformation corresponds to an $s$-congruence rotation in our setting.
\end{remark}

We have almost all the ingredients in order to prove that the equivalence relation $\epidown$ (or equivalently~$\equiv$) is a lattice congruence. We still need to prove that the induced $s$-Tamari classes have a unique maximal element, and that the projection $\piup$ is order preserving. 
We start by describing the maximal elements of the classes.

\begin{proposition}
Let $s$ be a composition (without zeros) and $T$ be an $s$-decreasing tree such that if~$\card_T(b,a) = s(b)$ then $\card_T(c,a) = s(c)$ for all $c > b$. Then, $T$ is maximal in its $s$-Tamari class and we call it an \defn{$s$-maximal-Tamari tree}.
\end{proposition}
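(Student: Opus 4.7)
The plan is to establish maximality by contradiction, using the characterization of in-class cover relations provided by Proposition~\ref{prop:s-tam-congruence-rotation}. Suppose $T$ is not maximal in its $s$-Tamari class. Then there exists an $s$-decreasing tree $T'$ with $T \woless T'$ in the same class. Taking a saturated chain from $T$ to $T'$ and invoking Corollary~\ref{cor:s-tam-interval-closes} (classes are closed by interval), we may assume without loss of generality that $T \wocover T'$ is a cover relation and that $T \epidown T'$.

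By Theorem~\ref{thm:cover-relations}, this cover relation corresponds to an $s$-tree rotation along a tree-ascent $(a,c)$ of $T$. Since $T \epidown T'$, Proposition~\ref{prop:s-tam-congruence-rotation} forces this rotation to be an \emph{$s$-congruence rotation}: indeed, otherwise we would have $\pidown(T) \woless \pidown(T')$, contradicting $\pidown(T) = \pidown(T')$. By Definition~\ref{def:s-tam-congruence-rotation}, this means there exists $b$ with $a < b < c$ such that $\card_T(b,a) = s(b)$.

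Now apply the hypothesis on $T$ to this $b$ and to $c>b$: since $\card_T(b,a) = s(b)$, we conclude that $\card_T(c,a) = s(c)$. However, $(a,c)$ being a tree-ascent requires $\card_T(c,a) < s(c)$ by Proposition~\ref{prop:tree-ascent-inversions}(ii) (equivalently, condition~\eqref{cond:tree-ascent-non-final} of Definition~\ref{def:tree-ascent}). This is a contradiction, so no such $T'$ exists and $T$ is maximal in its $s$-Tamari class.

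There is essentially no technical obstacle here: the argument is a direct combination of the hypothesis with Proposition~\ref{prop:s-tam-congruence-rotation} and the tree-ascent definition. The only subtlety worth stating explicitly is the reduction to a cover relation inside the class, which is immediate from Corollary~\ref{cor:s-tam-interval-closes}; everything else is a one-step contradiction between the hypothesis on $T$ and the defining inequality $\card_T(c,a) < s(c)$ of a tree-ascent.
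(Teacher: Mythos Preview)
Your proof is correct and takes essentially the same approach as the paper: both use Proposition~\ref{prop:s-tam-congruence-rotation} together with the tree-ascent condition $\card_T(c,a) < s(c)$ to rule out any $s$-congruence rotation out of $T$. The paper phrases it directly (every tree-ascent of $T$ fails to be an $s$-congruence tree-ascent), whereas you wrap the same step in a contradiction argument with an explicit reduction to a cover relation; the logical content is identical.
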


In the case of permutations, $s$-maximal-Tamari trees correspond to $213$ avoiding permutations. You can check on Figure~\ref{fig:s-tamari-class} that the trees satisfying this condition are exactly the maximal elements of the classes.

\begin{proof}
This is also a consequence of Proposition~\ref{prop:s-tam-congruence-rotation}. Suppose that we have an $s$-maximal-Tamari tree~$T$ with a tree ascent $(a,c)$. Then, by definition of a tree-ascent, $\card_T(c,a) < s(c)$. As $T$ is an $s$-maximal-Tamari tree, this means that there is no $b$ with $a < b <c$ such that $\card_T(b,a) = s(b)$, and so $(a,c)$ is not an $s$-congruence rotation.
\end{proof}

\begin{proposition}\label{prop:piup}
For an $s$-decreasing tree $T$, the multi inversion set defined by 
\[
\card_R(c,a) =
\begin{cases}
s(c), & \text{if there exists } b \text{ with } a < b < c \text{ and } \card_T(b,a) = s(b)\\
\card_T(c,a) & \text{otherwise}
\end{cases}
\]
for $c > a$, is an $s$-tree-inversion set. 
Abusing of notation, we define $\piup(T) = R$ to be the corresponding $s$-decreasing tree. 
Then, $R$ is an $s$-maximal-Tamari tree and $T \wole R$ in the $s$-weak order .
\end{proposition}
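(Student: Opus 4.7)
The plan is to verify three things separately: (a) that the defined multi inversion set is an $s$-tree-inversion set, (b) that $T \wole R$, and (c) that $R$ is an $s$-maximal-Tamari tree. Let me denote by $P(c,a)$ the condition ``there exists $b$ with $a<b<c$ and $\card_T(b,a)=s(b)$'', so that $\card_R(c,a)=s(c)$ when $P(c,a)$ holds and $\card_R(c,a)=\card_T(c,a)$ otherwise. Because $\card_R(c,a) \geq \card_T(c,a)$ for every $a<c$ by construction, part (b) is immediate: the inclusion of multi inversion sets gives $\inv(T)\subseteq \inv(R)$, hence $T\wole R$. For part (c), the argument is short: if $\card_R(b,a)=s(b)$ and $c>b$, then either $P(b,a)$ holds with some witness $b'$ (and $b'<b<c$ also witnesses $P(c,a)$) or else $\card_T(b,a)=s(b)$ (and then $b$ itself witnesses $P(c,a)$); in either case $\card_R(c,a)=s(c)$. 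Here I use crucially that $s$ has no zeros, so $s(b)>0$ makes the equality $\card_T(b,a)=s(b)$ a nontrivial inversion condition.

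The bulk of the work lies in part (a), namely verifying the transitivity and planarity conditions of Definition~\ref{def:tree-inversion-set}. For transitivity, assume $x<y<z$ with $\card_R(z,y)>0$ and $\card_R(y,x)>0$; I plan to conclude $\card_R(z,x)\geq \card_R(z,y)$ by a case split on whether $P(z,y)$ or $P(y,x)$ holds. The key observation is that witnesses of $P$ can be transported: a witness of $P(y,x)$ lies below $z$, so it witnesses $P(z,x)$ directly; and a witness $b$ of $P(z,y)$ with $\card_T(b,y)=s(b)>0$, combined with any nonzero $\card_T(y,x)$, yields $\card_T(b,x)\geq \card_T(b,y)=s(b)$ by the transitivity of $T$, making $b$ a witness of $P(z,x)$. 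When both $P(z,y)$ and $P(y,x)$ fail, both inversion counts reduce to the $T$-values and the transitivity of $T$ finishes the job.

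Planarity will be the delicate part. Given $x<y<z$ with $\card_R(z,x)=i>0$, I want $\card_R(y,x)=s(y)$ or $\card_R(z,y)\geq i$. If $P(z,x)$ fails, then $\card_R(z,x)=\card_T(z,x)=i$, and the planarity of $T$ applied to $x<y<z$ produces one of the two desired conclusions (with all $\card_R$ values dominating the $\card_T$ values). Otherwise, $P(z,x)$ holds with a witness $b$ satisfying $x<b<z$ and $\card_T(b,x)=s(b)$. When $b\leq y$, either $b=y$ gives $\card_T(y,x)=s(y)$ directly, or $b<y$ makes $b$ a witness of $P(y,x)$; both force $\card_R(y,x)=s(y)$. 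The hardest sub-case is $y<b<z$, where $b$ does not directly compare with $y$ or $z$ in a useful way. Here I plan to apply the planarity of $T$ to the triple $x<y<b$: either $\card_T(y,x)=s(y)$, again giving $\card_R(y,x)=s(y)$, or else $\card_T(b,y)\geq \card_T(b,x)=s(b)$, i.e., $\card_T(b,y)=s(b)$, which makes $b$ a witness of $P(z,y)$ and forces $\card_R(z,y)=s(z)\geq i$. This indirect use of the planarity of $T$ on a triple that does not involve $z$ directly is the main obstacle of the proof; everything else is careful bookkeeping driven by the two-case definition of $\card_R$.
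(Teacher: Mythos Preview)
Your proof is correct and follows essentially the same approach as the paper's: the same case splits on whether the witness condition $P$ holds, the same reductions to transitivity and planarity of $T$, and the same verification that $R$ is $s$-maximal-Tamari via transport of witnesses. In the planarity sub-case with $y<b<z$ you are actually more careful than the paper, which asserts $\card_R(z,y)=s(z)$ ``by construction'' while your explicit application of planarity of $T$ to the triple $x<y<b$ is exactly what is needed to justify that step.
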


\begin{proof}
First we show that the multi inversion set is an $s$-tree-inversion set as defined in Definition~\ref{def:tree-inversion-set}.
Again, we need to verify the transitivity and planarity conditions. 

\smallskip
Let us start with transitivity. Suppose that we have $a < b < c$ with $\card_R(c,b) = i$ and $\card_R(b,a) > 0$. We need to show that  $\card_R(c,a) \geq i$. We consider two possible cases. 

Case 1: Assume that $\card_T(b,a) = 0$. Then, there exists $b'$ with $a < b' < b$ and $\card_T(b',a) = s(b')$. By definition, this also implies that $\card_R(c,a) = s(c) \geq \card_R(c,b)=i$. 

Case 2: Assume that $\card_T(b,a) > 0$. If $\card_T(c,b) = i$, then transitivity in $T$ would imply $\card_T(c,a) \geq i$, which itself implies $\card_R(c,a) \geq i$ as desired. 
On the other hand, if $\card_T(c,b) \neq i$, then there exists $b'$ with $b < b' < c$ and $\card_T(b',b) = s(b')$. As we have assumed $\card_T(b,a) > 0$, this gives by transitivity that $\card_T(b',a) = s(b')$, and so $\card_R(c,a) = s(c)\geq i$ as desired. 

\smallskip
Now let us prove planarity. Let $a < b < c$ with $\card_R(c,a) = i$. 
We need to show that $\card_R(b,a) = s(b)$ or $\card_R(c,b) \geq i$. We consider again two cases.

Case 1: If $\card_T(c,a) = i$, then by planarity in $T$, either $\card_T(b,a) = s(b)$ or $\card_T(c,b) \geq i$. Both properties remain true for $R$ and so, planarity holds in this case. 

Case 2: If $\card_T(c,a) \neq i$, then there exists $b'$ with $a < b' < c$ and $\card_T(b',a) = s(b')$ (and $\card_R(c,a) = s(c)$).
If $b' \leq b$, this implies $\card_R(b,a) = s(b)$. 
If $b' > b$, this implies by construction $\card_R(c,b) = s(c)$. In both cases, the planarity condition holds.

\smallskip
We have shown that the multi inversion set $R$ satisfies the transitivity and planarity conditions, and therefore it is an $s$-tree inversion set. By Proposition~\ref{prop:tree-inversions-bij}, it is the tree inversion set of an $s$-decreasing tree, which we also denote by $R$ for simplicity.  

By definition of $R$, it is straight forward to check that if~$\card_R(b,a) = s(b)$ then $\card_R(c,a) = s(c)$ for all~$c > b$. Therefore, $R$ is an $s$-maximal-Tamari tree. The fact that $T \wole R$ is also clear from the definition, as we only increase cardinalitites.  
\end{proof}

\begin{lemma}
Let $T$ be an $s$-decreasing tree, then $\piup(\piup(T)) = \piup(T)$. In other words, $\piup$ is indeed a projection. In particular, if $T$ is an $s$-maximal-Tamari tree, $\piup(T) = T$.
\end{lemma}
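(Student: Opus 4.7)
The plan is to reduce the identity $\piup(\piup(T))=\piup(T)$ to the ``In particular'' clause. By Proposition~\ref{prop:piup}, $R=\piup(T)$ is always an $s$-maximal-Tamari tree, so it suffices to show that $\piup$ acts as the identity on $s$-maximal-Tamari trees. I would then unfold the definition of $\piup$ from Proposition~\ref{prop:piup} and verify that neither of the two cases in its piecewise description changes any inversion cardinality.

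More precisely, suppose $T$ is an $s$-maximal-Tamari tree, and fix $a<c$. If there is no $b$ with $a<b<c$ and $\card_T(b,a)=s(b)$, then by definition $\card_{\piup(T)}(c,a)=\card_T(c,a)$, so the inversion is preserved. If on the other hand such a $b$ exists, then by definition $\card_{\piup(T)}(c,a)=s(c)$. This is where the $s$-maximal-Tamari hypothesis kicks in: the defining property of such a tree says exactly that $\card_T(b,a)=s(b)$ together with $c>b$ forces $\card_T(c,a)=s(c)$. Hence $\card_{\piup(T)}(c,a)=s(c)=\card_T(c,a)$, and the inversion is again preserved.

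Since both cases give $\card_{\piup(T)}(c,a)=\card_T(c,a)$ for every pair $a<c$, we conclude $\piup(T)=T$ by Proposition~\ref{prop:tree-inversions-bij}. Applying this to the $s$-maximal-Tamari tree $\piup(T)$ yields $\piup(\piup(T))=\piup(T)$, which is the main claim. I do not anticipate a genuine obstacle here: the whole argument is a direct verification, and the only ``content'' is noticing that the defining condition of an $s$-maximal-Tamari tree is precisely the fixed-point condition for the piecewise rule defining $\piup$.
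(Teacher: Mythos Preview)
Your proof is correct and follows essentially the same approach as the paper: reduce to showing that $\piup$ fixes $s$-maximal-Tamari trees (using that the image of $\piup$ lies in this class), then verify this directly from the definition. The paper simply writes ``this follows immediately from the definition,'' whereas you spell out the two cases explicitly; the content is identical.
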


\begin{proof}
We proved in Proposition~\ref{prop:piup} that the image of $\piup$ is an $s$-maximal-Tamari tree. 
Therefore, it suffices to show that $\piup(T) = T$ when $T$ is an $s$-maximal-Tamari tree.
This follows immediately from the definition. 
\end{proof}

Like $\pidown$, $\piup$ defines a \defn{$\piup$-equivalence relation} given by $T \epiup T'$ if and only if $\piup(T) = \piup(T')$. 

\begin{proposition}
\label{prop:s-tam-up-congruence-rotation}
Let $T \wole T'$ be a cover relation of the $s$-weak order. Then $T \epiup T'$ if and only if $T'$ is obtained by applying an $s$-congruence rotation on $T$. Moreover, if the cover relation is not an $s$-congruence rotation, then $\piup(T) \woless \piup(T')$.
\end{proposition}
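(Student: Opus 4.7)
The strategy mirrors the proof of Proposition~\ref{prop:s-tam-congruence-rotation}, replacing $\pidown$ by $\piup$. By Theorem~\ref{thm:cover-relations}, the cover relation $T\wole T'$ corresponds to rotating some tree-ascent $(a,c)$, and by Lemma~\ref{lem_tree_rotation_inversions} the inversions of $T'$ agree with those of $T$ except that $\card(c,a'')$ is increased by one for each $a''$ in the set $A:=\{a\}\cup\{\text{non-left descendants of }a\text{ in }T\}$. Every element of $A$ satisfies $a''\leq a$, and for $a''<a$ in $A$ we have $\card_T(a,a'')>0$.

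For the forward direction, assume $(a,c)$ is an $s$-congruence tree-ascent, so there is some $b$ with $a<b<c$ and $\card_T(b,a)=s(b)$. The plan is to check on each pair $(c',a')$ with $c'>a'$ that $\piup(T)(c',a')=\piup(T')(c',a')$. The pivotal observation, proved by transitivity in $T$, is that $\card_T(b,a')=s(b)$ for every $a'\in A$: for $a'=a$ it is the hypothesis, and for a non-left descendant $a'<a$ in $A$ it follows from $\card_T(a,a')>0$ combined with $\card_T(b,a)=s(b)$. Because $s$ has no zeros and $a'<b$ for every $a'\in A$, this means that $b$ is a uniform witness certifying the existence condition of Proposition~\ref{prop:piup} for every pair $(c',a')$ with $a'\in A$ and $c'\geq c$, both in $T$ and in $T'$. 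A short case analysis then disposes of all potential sources of disagreement: for $(c',a')$ with $a'\in A$ and $c'\geq c$, both $\piup$-values are $s(c')$; for all other pairs, neither the cardinality nor the existence condition changes from $T$ to $T'$ (since the only modified inversions involve $c$ on the large side and $A$ on the small side). Hence $\piup(T)=\piup(T')$.

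For the converse, suppose $(a,c)$ is not an $s$-congruence tree-ascent, so $\card_T(b,a)<s(b)$ for all $a<b<c$. These cardinalities are not touched by the rotation, so the else-clause of Proposition~\ref{prop:piup} applies at $(c,a)$ in both trees, yielding $\piup(T)(c,a)=\card_T(c,a)<\card_{T'}(c,a)=\piup(T')(c,a)$ and therefore $\piup(T)\neq\piup(T')$. The \emph{moreover} statement then requires showing $\piup(T)\wole\piup(T')$; this follows from a routine monotonicity check on any pair $(c',a')$: if the existence condition holds already in $T$ it still holds in $T'$ (inversions only increase) and both give $s(c')$, while if it fails in $T$ then $\piup(T)(c',a')=\card_T(c',a')\leq\card_{T'}(c',a')\leq\piup(T')(c',a')$, using $\piup(T')(c',a')\in\{\card_{T'}(c',a'),s(c')\}$ and $\card_{T'}(c',a')\leq s(c')$. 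Combined with the strict inequality at $(c,a)$ this yields $\piup(T)\woless\piup(T')$. The main difficulty lies in the forward direction, specifically in verifying that every pair whose cardinality or existence condition could \emph{a priori} be altered by the rotation nonetheless receives the same $\piup$-value; the transitivity argument producing $b$ as a universal witness for the indices in $A$ is what collapses all these cases into a single uniform explanation.
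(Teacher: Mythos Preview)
Your proof is correct and follows essentially the same approach as the paper's: in the forward direction you use transitivity to show that the witness $b$ with $\card_T(b,a)=s(b)$ also satisfies $\card_T(b,a')=s(b)$ for every $a'\in A$, forcing both $\piup$-values to $s(c')$ on the affected pairs, while in the converse you observe that the else-clause of Proposition~\ref{prop:piup} applies at $(c,a)$ in both trees and then verify monotonicity. Your case analysis is slightly more explicit than the paper's (which simply says ``the cardinality can only increase''), but the argument is the same.
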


\begin{proof}
Let $T$ be an $s$-decreasing tree with a tree-ascent $(a,c)$ and $T'$ the result of the corresponding $s$-tree-rotation. We write $R = \piup(T)$ and $R' = \piup(T')$. 
As in the proof of Proposition~\ref{prop:s-tam-congruence-rotation},
we denote by~$A$ the set containing the node $a$ and all its middle descendants, and recall that the only increased cardinalities in $T'$ are those of $(c,a')$ where $a' \in A$. 

Assume that $(a,c)$ is an $s$-congruence tree-ascent. This means that there exists $b$ with $a < b < c$ such that $\card_T(b,a) = s(b)$. Therefore, for all $a' \in A$, we also have $\card_T(b,a') = s(b)$. This implies that for all $a' \in A$, and all $c' \geq c$, we have $\card_R(c',a') = s(c)$. This is also true in $T'$, and all the other cardinalities stay unchanged, which gives~$R' = R$.

Now if $(a,c)$ is not an $s$-congruence tree-ascent, there exists no $b$ with $a < b < c$ and $\card_T(b,a) = s(b)$. This gives $\card_R(c,a) = \card_T(c,a)$. For the same reason and because no $(b,a)$ cardinality is modified by the rotation, we obtain $\card_{R'}(c,a) = \card_{T'}(c,a) = \card_R(c,a) + 1$. For all other $a' < c'$, we have $\card_R(c',a') \leq \card_{R'}(c',a')$ because the cardinality can only increase. We then obtain that $R \woless R'$.
\end{proof}

\begin{corollary}
\label{cor:piup}
Let $s$ be a composition (without zeros), then: 
\begin{enumerate}
\item $\piup$ is order preserving.
\item The classes of the $\epiup$ equivalence are closed by interval.
\item The three equivalence relations $\epiup$,  $\equiv$, and $\epidown$ are the same. 
\end{enumerate}
\end{corollary}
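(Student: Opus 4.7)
The three statements are the exact $\piup$-analogues of Corollaries~\ref{cor:pidown-order-pres}, \ref{cor:s-tam-interval-closes} and~\ref{cor:s-tam-equiv-equiv}, so my plan is to mimic those three proofs step by step, with Proposition~\ref{prop:s-tam-up-congruence-rotation} playing the role that Proposition~\ref{prop:s-tam-congruence-rotation} played in the $\pidown$ setting.

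For part (1), I would argue on cover relations first. If $T\wocover T'$ is a cover relation of the $s$-weak order, then Proposition~\ref{prop:s-tam-up-congruence-rotation} gives either $\piup(T)=\piup(T')$ (when the rotation is an $s$-congruence rotation) or $\piup(T)\woless \piup(T')$ (otherwise); either way, $\piup(T)\wole \piup(T')$. Since every comparison $T\wole T'$ in the $s$-weak order is a composition of cover relations, order-preservation follows by transitivity.

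For part (2), I would copy the argument of Corollary~\ref{cor:s-tam-interval-closes}: given $T\wole T'\wole T''$ with $\piup(T)=\piup(T'')=R$, order-preservation of $\piup$ from part (1) forces
\[
R=\piup(T)\wole \piup(T')\wole \piup(T'')=R,
\]
so $\piup(T')=R$ and $T'$ lies in the same $\epiup$-class.

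For part (3), Corollary~\ref{cor:s-tam-equiv-equiv} already identifies $\equiv$ with $\epidown$, so it suffices to show that $\epiup$ coincides with $\equiv$. One direction is immediate from Proposition~\ref{prop:s-tam-up-congruence-rotation}: a single $s$-congruence rotation preserves $\piup$, so $T\equiv T'$ implies $T\epiup T'$. For the converse, assume $T\epiup T'$ and let $R=\piup(T)=\piup(T')$ be the common image, which is the unique maximal element of the $\epiup$-class (here I use that $R$ is an $s$-maximal-Tamari tree together with part~(2)). By part~(2) the entire intervals $[T,R]$ and $[T',R]$ lie inside the $\epiup$-class, and each cover relation inside such an interval must be an $s$-congruence rotation, since any non-congruence rotation strictly increases $\piup$ by Proposition~\ref{prop:s-tam-up-congruence-rotation}. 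Chaining these rotations connects $T$ to $R$ and $R$ to $T'$, giving $T\equiv T'$.

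The only subtle point is verifying that the $\epiup$-class of $T$ really has $R=\piup(T)$ as its unique maximum, so that the intervals $[T,R]$ and $[T',R]$ cover the whole class; this is where the fact that $\piup$ is idempotent and lands in the $s$-maximal-Tamari trees (the analogue of the $s$-Tamari trees for $\pidown$) is crucial, and it is already established before the corollary. Once this is in hand, the three parts are essentially bookkeeping parallel to the $\pidown$ case, so I don't foresee any genuine obstacle.
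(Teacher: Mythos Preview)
Your proposal is correct and follows exactly the approach the paper takes: the paper's own proof simply states that the three items are proved in the same way as Corollaries~\ref{cor:pidown-order-pres}, \ref{cor:s-tam-interval-closes}, and~\ref{cor:s-tam-equiv-equiv}, replacing Proposition~\ref{prop:s-tam-congruence-rotation} by Proposition~\ref{prop:s-tam-up-congruence-rotation}. You have spelled out precisely those parallel arguments, including the use of idempotence of $\piup$ and $T\wole\piup(T)$ from Proposition~\ref{prop:piup} to make the intervals $[T,R]$ and $[T',R]$ available.
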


\begin{proof}
The proofs of the three items in this corollary are exactly the same as the proofs of the three Corollaries~\ref{cor:pidown-order-pres},~\ref{cor:s-tam-interval-closes}, and~\ref{cor:s-tam-equiv-equiv}, respectively, using Proposition~\ref{prop:s-tam-up-congruence-rotation} instead of Proposition~\ref{prop:s-tam-congruence-rotation}. 
\end{proof}

\begin{remark}
Note that the proofs of~Propositions~\ref{prop:piup} and~\ref{prop:s-tam-up-congruence-rotation} do not require $s(b) > 0$. Indeed, even when $s$ contains zeros, the projection $\piup$ is order preserving and $\epiup$ is the same as $\equiv$. Nevertheless, it is not the same as $\epidown$ (because Proposition~\ref{prop:s-tam-congruence-rotation} requires $s(b) > 0$). This can be seen on Figure~\ref{fig:quotient-counter-ex} where the $\epidown$-classes and $\epiup$-classes are circled. In this case, you can see that $\epiup$ is a lattice congruence but the quotient lattice is not the $s$-Tamari lattice. On the other hand, on Figure~\ref{fig:quotient-counter-ex}, $\epidown$ is not a lattice congruence: indeed the projection sending elements to the top of their $\epidown$ class is not order preserving.  
\end{remark}

\begin{theorem}\label{thm_sTam_quotientLattice}
For $s$ a composition (without zeros), 
the equivalence relation $\epiup$ (or equivalently any of $\equiv$ or $\epidown$) is a lattice congruence. 
Moreover, the $s$-Tamari lattice is the quotient lattice of the $s$-weak order determined by $\epidown$.
\end{theorem}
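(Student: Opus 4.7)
The plan is to verify directly the three defining conditions of a lattice congruence (Definition~\ref{def:lattice-congruence}) for the relation $\epidown$, and then identify the resulting quotient with the $s$-Tamari lattice. Most of the machinery has already been assembled in the corollaries above; what remains is a careful assembly.

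First I would show that each $\epidown$-class equals the interval $[\pidown(T),\piup(T)]$ of the $s$-weak order. Both endpoints lie in the class of $T$: by Corollary~\ref{cor:s-tam-equiv-equiv} together with Corollary~\ref{cor:piup}(3), the three relations $\equiv$, $\epidown$, $\epiup$ coincide, and $T$ is connected to both $\pidown(T)$ and $\piup(T)$ via sequences of $s$-congruence rotations (obtained by following covering relations in the intervals $[\pidown(T),T]$ and $[T,\piup(T)]$, each of whose cover relations is an $s$-congruence rotation by Propositions~\ref{prop:s-tam-congruence-rotation} and~\ref{prop:s-tam-up-congruence-rotation}). Conversely, any $T'$ equivalent to $T$ satisfies $\pidown(T)=\pidown(T')\wole T'\wole \piup(T')=\piup(T)$, so the class is contained in this interval, and closure by interval (Corollary~\ref{cor:s-tam-interval-closes}) yields the other inclusion.

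Second, the order-preservation of $\pidown$ and $\piup$ is given by Corollary~\ref{cor:pidown-order-pres} and Corollary~\ref{cor:piup}(1). Together with the interval property just established, this shows that $\epidown$ is a lattice congruence, fulfilling all three conditions of Definition~\ref{def:lattice-congruence}.

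To identify the quotient with the $s$-Tamari lattice, I would observe that the image of $\pidown$ equals the set of $s$-Tamari trees: by Proposition~\ref{prop:pidown} every element of the image is an $s$-Tamari tree, and the lemma immediately following that proposition shows that every $s$-Tamari tree is fixed by $\pidown$ and hence lies in the image. By Definition~\ref{def:lattice-congruence}, the quotient lattice is therefore isomorphic to the subposet of the $s$-weak order induced by the $s$-Tamari trees, which by construction is the $s$-Tamari poset. Since this poset is already a sublattice by Theorem~\ref{thm:stam-sublattice}, the lattice structure inherited from the quotient coincides with the $s$-Tamari lattice structure. The only delicate point is the coincidence of the three equivalence relations when $s$ has no zeros—exactly the hypothesis that fails in the counterexample of Figure~\ref{fig:quotient-counter-ex}—but this has been arranged by Corollary~\ref{cor:piup}(3), and the remainder of the argument is a mechanical assembly of the preceding results.
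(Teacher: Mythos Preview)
Your proposal is correct and follows essentially the same approach as the paper's own proof: both verify the three conditions of Definition~\ref{def:lattice-congruence} by combining the already-established corollaries (interval closure of classes, coincidence of $\epidown$, $\equiv$, $\epiup$, and order-preservation of $\pidown$ and $\piup$), and then identify the quotient with the subposet of minimal class representatives, namely the $s$-Tamari trees. Your version is slightly more explicit in writing the class as $[\pidown(T),\piup(T)]$ and in invoking Theorem~\ref{thm:stam-sublattice} to confirm that the inherited lattice structure agrees with the sublattice structure, but these are refinements rather than a different argument.
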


\begin{proof}
We have defined the $\epidown$ equivalence as the fibers of the projection $\pidown$ onto $s$-Tamari trees. As~$\pidown(T) \wole T$, the classes have a unique minimal element. Similarly, the classes of $\epiup$ have a unique maximal element. By Corollaries~\ref{cor:s-tam-interval-closes} and~\ref{cor:piup}, they are the same classes and are closed by interval. So, the classes are intervals themselves. Moreover, Corollaries~\ref{cor:pidown-order-pres} and~\ref{cor:piup} prove that $\pidown$ and $\piup$ are order preserving. As a consequence $\epidown$ is a lattice congruence as in Definition~\ref{def:lattice-congruence}.  

The quotient lattice determined by $\epidown$ is then isomorphic to the subposet of either the minimal class elements (the $s$-Tamari trees) or the maximal ones (the $s$-maximal-Tamari trees). The first is by definition the $s$-Tamari lattice.
\end{proof}

\begin{remark}
As a consequence, we get that the $s$-Tamari trees form a join-sublattice which we also proved directly in Theorem~\ref{thm:stam-sublattice}. But note that Theorem~\ref{thm:stam-sublattice} does not derives entirely from Theorem~\ref{thm_sTam_quotientLattice}: the fact that $s$-Tamari trees form a meet-sublattice is not a consequence of~Theorem~\ref{thm_sTam_quotientLattice} and, besides, Theorem~\ref{thm:stam-sublattice} is true even when $s$ contains zeros.
\end{remark}

\subsection{Cover relations}
Similarly as in the lattice of $s$-decreasing trees, the cover relations on the sublattice of $s$-Tamari trees can be described in terms of certain rotations on trees which we now describe. 

\begin{definition}
\label{def:Tamari-ascent}
Let $T$ be an $s$-Tamari tree of some weak composition $s$. We say that~$(a,c)$ with $a < c$ is a \defn{Tamari-ascent} of $T$ if $a$ is a non-right child of $c$. 
\end{definition}

Note that $a$ needs to be a direct child of $c$ (not a descendant). Figure~\ref{fig:stam-cover} shows an example with $4$ Tamari-ascents.

\begin{lemma}\label{lem_sTamari_rotation}
Let $T$ be an $s$-Tamari tree for some weak composition $s$, and $(a,c)$ a Tamari-ascent of~$T$. 
Then $\tc{\left( \inv(T) + (c,a) \right)}$ is an $s$-Tamari inversion set. The inversions of its corresponding $s$-Tamari tree~$T'$ are
\begin{equation}
\label{eq_sTamari_rotation_inversions}
\card_{T'}(c',a') =
\left\{
	\begin{array}{ll}
		\card_{T}(c',a')+1  & \mbox{if } c'=c \text{ and } a' \text{ is equal to } a \text{ or a non-left descendant of } a, \\
		\card_{T}(c',a') & \mbox{otherwise. } 
	\end{array}
\right.
\end{equation}
\end{lemma}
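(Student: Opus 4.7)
My plan is to verify directly that the multi inversion set $T'$ defined by Equation~\eqref{eq_sTamari_rotation_inversions} is an $s$-Tamari inversion set coinciding with $\tc{(\inv(T) + (c,a))}$. First I set $i := \card_T(c,a)$. Since $(a,c)$ is a Tamari-ascent, $a$ is a non-right child of $c$, so $a$ is the root of $T_i^c$ and $i < s(c)$. Because $T$ is $s$-Tamari and $a$ is the root of $T_i^c$, every descendant of $a$ in $T$ also lies in $T_i^c$ and has cardinality $i$ with $c$; in particular all new values $\card_{T'}(c,x) = i+1$ stay within $s(c)$.

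The first step will be to show that $T'$ is a valid $s$-tree inversion set and coincides with $\tc{(\inv(T) + (c,a))}$, by replaying the argument of Lemma~\ref{lem_tree_rotation_inversions}. Although $(a,c)$ is not necessarily a tree-ascent of $T$ (condition~(iv) of Definition~\ref{def:tree-ascent} may fail when $s(a)>0$ and $a$ has a nonempty strict right subtree), the only nontrivial use of the tree-ascent hypothesis in that proof is Cond.~\ref{cond:tree-ascent-middle}, which is vacuously satisfied for a Tamari-ascent: since $a$ is a direct child of $c$, there is no $b$ with $a<b<c$ of which $a$ is a descendant, and moreover no $b$ with $a<b<c$ can satisfy $\card_T(c,b) = i$ (such a $b$ would have to lie in $T_i^c$, whose elements are all $\leq a$). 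The transitivity, planarity, and minimality reasoning transfers directly, with the simplification that whenever the original proof invokes a first common ancestor strictly between $a$ and $c$, here the ancestor is just $c$.

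The second and principal step will be to verify the $s$-Tamari condition $\card_{T'}(c',a') \leq \card_{T'}(c',b')$ for all $a' < b' < c'$. For $c' \neq c$ no cardinalities change and the condition is inherited from $T$. For $c' = c$, three of the four subcases according to whether $(c,a')$ and $(c,b')$ are modified are immediate: the cardinalities are either unchanged, or incremented on both sides, or incremented only on the $b'$ side. The one delicate subcase is when $(c,a')$ is modified but $(c,b')$ is not, so $a'$ is $a$ or a non-left descendant of $a$ in $T$, while $b'$ is either a left descendant of $a$ or lies outside the subtree rooted at $a$. If $b'$ is a left descendant of $a$ then $a'<b'<a$ forces $a'$ to be a non-left descendant of $a$, contradicting the $s$-Tamari property of $T$ applied at the node $a$, which would give $\card_T(a,b')=0<1\leq \card_T(a,a')$. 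If $b'$ lies outside the subtree of $a$, then since $b'$ is not a descendant of $a$, the $s$-Tamari property of $T$ at $c$ together with $a'<b'$ forces $\card_T(c,b') \geq i+1$, and hence $\card_{T'}(c,b') \geq i+1 = \card_{T'}(c,a')$.

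The main obstacle is the last $s$-Tamari verification, particularly ruling out the configuration where $b'$ is a left descendant of $a$ and $a'$ is a non-left descendant, which relies crucially on invoking the $s$-Tamari property of $T$ at the subtree rooted at $a$ rather than at $c$.
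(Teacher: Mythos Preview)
Your proposal is correct and follows essentially the same three-part structure as the paper's own proof: verify planarity of $K=\inv(T)+(c,a)$, identify $\tc K$ with the explicit formula, and then check the $s$-Tamari inequality. Where the paper rewrites Parts~1--2 from scratch, you instead observe that the proof of Lemma~\ref{lem_tree_rotation_inversions} only uses Conditions~(i)--(iii) of Definition~\ref{def:tree-ascent}, all of which hold (vacuously in the case of~(iii)) for a Tamari-ascent; this is a legitimate and slightly more economical route to the same conclusion. For Part~3 the paper merely says ``follows by inspection,'' so your explicit case analysis is actually more detailed.

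One small imprecision: in your final subcase (``$b'$ lies outside the subtree of $a$'') you claim that the $s$-Tamari property \emph{at $c$} forces $\card_T(c,b')\geq i+1$, but that property alone only gives $\card_T(c,b')\geq i$. To exclude $\card_T(c,b')=i$ you also need to rule out $b'$ lying \emph{left of $c$} when $i=0$; this is where the $s$-Tamari condition at the first common ancestor $d$ of $b'$ and $c$ is needed (one gets $\card_T(d,b')<\card_T(d,c)=\card_T(d,a')$, contradicting $a'<b'$). This is an easy one-line patch and does not affect the overall argument.
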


In other words, a node moves to the right with its right children and middle children.

\begin{proof}
Let $(c,a)$ be a pair of nodes in $T$ such that $a$ is a non-right child of $c$, and denote $K  = \inv(T) + (c,a)$ for simplicity. The proof follows the same steps as in the proof of Lemma~\ref{lem_tree_rotation_inversions}, the details are similar for Part 1 and exactly the same for Part 2. The last Part 3 checks that $T'$ is an $s$-Tamari tree.

\begin{proofpart}[$\tc{K}$ is a tree-inversion set]
By Lemma~\ref{prop:tc-transitive} and Lemma~\ref{prop:tc-planarity}, it suffices to show that $K$ satisfies the planarity condition. 
For this we need to show that for any $x<y<z$, $\card_K(z,x)=i$ implies $\card_K(y,x)=s(y)$ or $\card_K(z,y)\geq i$. We consider two possible cases. 

If $(z,x)\neq (c,a)$ then $\card_T(z,x)=\card_K(z,x)=i$, which implies $\card_T(y,x)=s(y)$ or $\card_T(z,y)\geq i$. Whichever condition holds is also true for $K$.

Now consider the case $a<b<c$. Here we have $\card_K(c,a)=\card_T(c,a)+1=i$, which implies $\card_T(b,a)=\card_K(b,a)=s(b)$ or $\card_T(c,b)=\card_K(c,b)\geq i-1$.
Note that $\card_T(c,b)$ can not be equal to $i-1$, otherwise $a<b$ would both belong to the same subtree $T_{i-1}^c$ of $T$. But this contradicts the fact that $T$ is a decreasing tree because $a$ is a child of $c$ and therefore the root of $T_{i-1}^c$ by assumption.
\end{proofpart}

\begin{proofpart}[Equation~\eqref{eq_sTamari_rotation_inversions} defines $\tc{K}$, the smallest transitive multi inversion set containing $K$]

We start by proving that the multi inversion set defined by Equation~\eqref{eq_sTamari_rotation_inversions} is transitive.

Let $x<y<z$ and assume that $\card_{T'}(z,y)=i$. We need to show that 
\[
\card_{T'}(y,x)=0 \quad \text{or} \quad \card_{T'}(z,x)\geq i.
\]

\begin{paragraph}{\bf{Case 1}}
{\bf $\mathbf{(z,y)=(c,a')}$ for some $a'$ a non-left descendant of $a$ in $T$.}
In this case $\card_{T'}(z,y)= \card_{T}(z,y)+1=i$, which implies $\card_{T}(y,x)=0$ or $\card_{T}(z,x)\geq i-1$ by transitivity of $T$.
If $\card_{T}(y,x)=0$ then $\card_{T'}(y,x)=0$ because $y\neq c$ and we are done. 
If $\card_{T}(z,x)> i-1$ then $\card_{T'}(z,x)\geq i$ and we are also done.
Now assume $\card_{T}(z,x)= i-1$ and $\card_{T}(y,x)>0$. By transitivity of $T$ for $x<y<a$, and given that $a'$ is a non-left descendant of $a$ in $T$, we get the following two inequalities:
\[
\card_T(a,x) \geq \card_T(a,y) = \card_T(a,a') > 0.
\]
Since $a$ is a child of $c$ in $T$ and $\card_T(a,x)>0$, then $x$ is necessarily a non-left descendant of~$a$, otherwise $\card_T(z,x)=\card_T(c,x)>\card_T(c,a)=i-1$ which contradicts our assumption.
Therefore $\card_{T'}(z,x)=\card_{T}(z,x)+1=i-1+1=i\geq i$ as desired.
\end{paragraph}

\medskip
\begin{paragraph}{\bf{Case 2}}
{\bf $\mathbf{(z,y) \neq (c,a')}$ for any non-left descendant $a'$ of $a$ in $T$.}
In this case \mbox{$\card_{T'}(z,y)= \card_{T}(z,y)=i$}, which implies that $\card_{T}(y,x)=0$ or $\card_{T}(z,x)\geq i$ by transitivity of $T$.
If $\card_{T}(z,x)\geq i$ then $\card_{T'}(z,x)\geq i$ and we are done. If $\card_T(y,x)=0$ and $\card_{T'}(y,x)=0$ we are also done. Now assume $\card_T(y,x)=0$ and $\card_{T'}(y,x)>0$. This happens exactly when $(y,x)=(c,a')$ for some $a'$ a non-left descendant of $a$ in $T$. Since $z>y$ and $x$ is a descendant of $y$ then $\card_T(z,x)=\card_T(z,y)=i$, and therefore $\card_{T'}(z,x)\geq i$ holds.
\end{paragraph}

\medskip
This finishes our proof that the multi inversion set defined by Equation~\eqref{eq_sTamari_rotation_inversions} is transitive. 
It clearly contains $K$, and is the smallest transitive multi inversion set containing it because of the transitivity condition. We conclude that it is equal to $\tc{K}$ by Lemma~\ref{prop:tc-transitive}. 
\end{proofpart}

\begin{proofpart}[$\tc{K}$ is an $s$-Tamari inversion set]
We need to check that for any $a<b<c$ we have that $\card_{T'}(c,a)\leq \card_{T'}(c,b)$. This follows by inspection from Equation~\eqref{eq_sTamari_rotation_inversions}.
\end{proofpart}
The lemma follows froms Parts 1, 2, and 3.
\end{proof}

\begin{definition}
If $(a,c)$ is a Tamari-ascent of an $s$-Tamari tree $T$, we call the $s$-Tamari tree corresponding to $\tc{\left( \inv(T) + (c,a) \right)}$ a \defn{$s$-Tamari rotation} of $T$.
\end{definition}

See Figure~\ref{fig:stam-cover} for examples of $s$-Tamari rotations.

\begin{figure}[ht]
\input{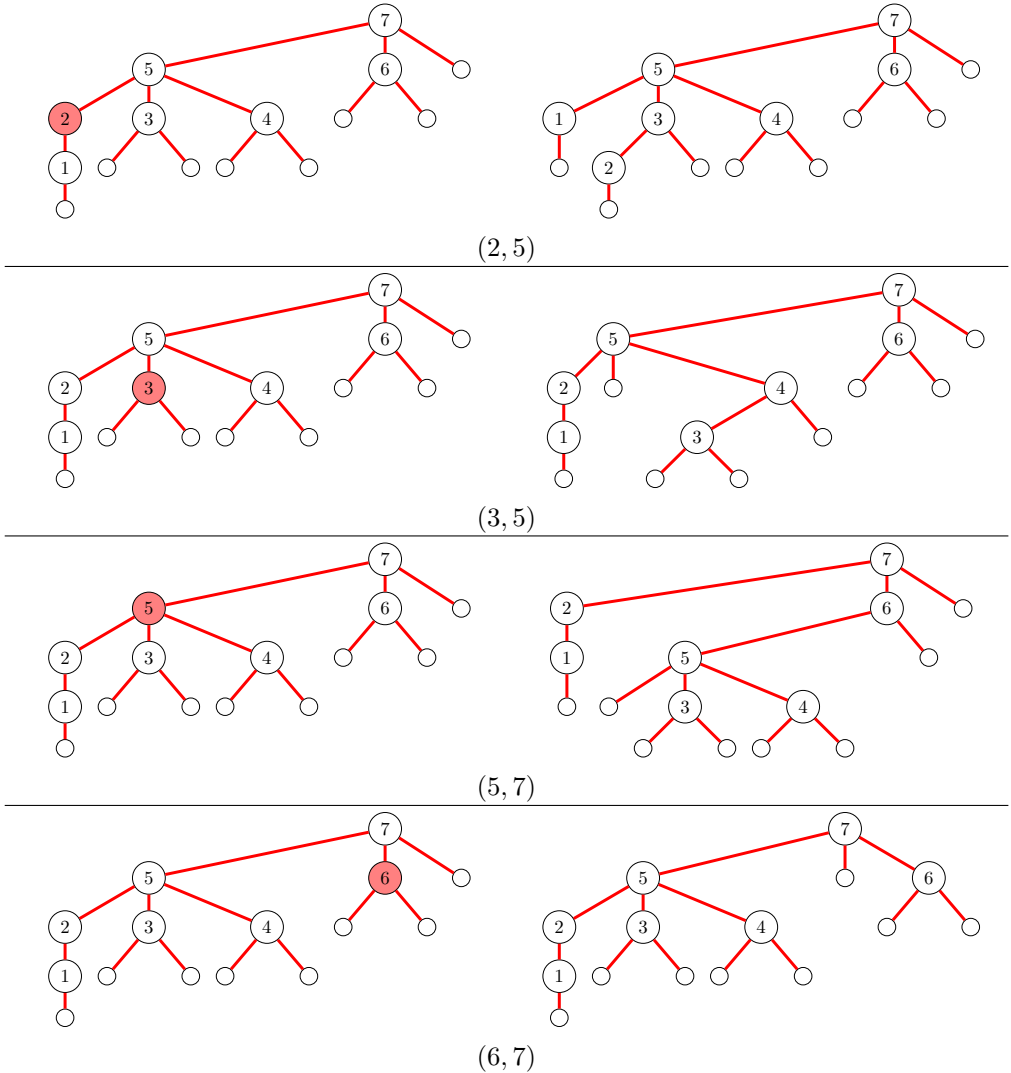}
\caption{Example of $s$-Tamari rotations}
\label{fig:stam-cover}
\end{figure}

\begin{theorem}\label{thm_sTamari_covering_relations}
The cover relations of the $s$-Tamari lattice are in correspondence with $s$-Tamari rotations.
\end{theorem}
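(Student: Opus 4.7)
The plan adapts the two-part strategy of Theorem~\ref{thm:cover-relations}. For the forward direction, let $(a,c)$ be a Tamari-ascent of an $s$-Tamari tree $T$ with corresponding $s$-Tamari rotation $T'$, and suppose $T\wole T''\wole T'$ with $T''$ an $s$-Tamari tree. By Lemma~\ref{lem_sTamari_rotation}, the only inversions where $T$ and $T'$ differ are the pairs $(c,a')$ with $a'\in A:=\{a\}\cup\{\text{non-left descendants of } a \text{ in } T\}$, each increasing by one, and setting $j:=\card_T(c,a)$ one has $\card_T(c,a')=j$ for every $a'\in A$. I argue dichotomously on $\card_{T''}(c,a)\in\{j,j+1\}$. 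If $\card_{T''}(c,a)=j$, the $s$-Tamari condition on $T''$ applied to $a'<a<c$ gives $\card_{T''}(c,a')\leq j$ for $a'\in A\setminus\{a\}$, so $T''=T$. If $\card_{T''}(c,a)=j+1$, then $\card_T(a,a')>0$ for $a'\in A\setminus\{a\}$ implies $\card_{T''}(a,a')>0$, and transitivity in $T''$ forces $\card_{T''}(c,a')\geq j+1$, so $T''=T'$.

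For the converse, let $T\wocover T'$ in the $s$-Tamari lattice and write $V:=\{(c,a):\card_T(c,a)<\card_{T'}(c,a)\}$. The key choice is to pick $a$ to be the largest second-coordinate appearing in $V$, and then $c$ to be the largest first-coordinate with $(c,a)\in V$. I claim $(a,c)$ is a Tamari-ascent of $T$. Since $\card_T(c,a)<s(c)$, $a$ is not in the rightmost subtree of $c$ nor to the right of $c$. If $a$ were a descendant of $c$ at depth at least two, its parent $b$ in $T$ would satisfy $a<b<c$ and $\card_T(c,b)=\card_T(c,a)$; the $s$-Tamari condition on $T'$ would give $\card_{T'}(c,b)\geq\card_{T'}(c,a)>\card_T(c,b)$, placing $(c,b)$ in $V$ with $b>a$ and contradicting the maximality of $a$. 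If $a$ were to the left of $c$ in $T$, their first common ancestor $d>c$ would satisfy $\card_T(d,a)<\card_T(d,c)$; since $\card_{T'}(d,c)\geq\card_T(d,c)>0$ and $\card_{T'}(c,a)>0$, transitivity in $T'$ combined with its $s$-Tamari property would force $\card_{T'}(d,a)=\card_{T'}(d,c)>\card_T(d,a)$, placing $(d,a)$ in $V$ with $d>c$ and contradicting the maximality of $c$. Hence $a$ is a direct non-right child of $c$.

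Finally, let $T^*$ be the $s$-Tamari rotation of $T$ at $(a,c)$. To show $T^*\wole T'$, I check that $\card_{T'}(c,a')\geq j+1$ for every $a'\in A$: for $a'=a$ this is the definition of $(c,a)\in V$, and for $a'\in A\setminus\{a\}$ transitivity in $T'$ applied to $\card_{T'}(a,a')\geq\card_T(a,a')>0$ and $\card_{T'}(c,a)\geq j+1$ yields the claim. All other inversions of $T^*$ agree with those of $T$, so $\inv(T^*)\subseteq\inv(T')$. Since $T^*\neq T$ and $T^*$ is $s$-Tamari, the cover property forces $T^*=T'$. The main obstacle is the asymmetric choice of the pair $(c,a)\in V$ in the converse: one must take $c$ maximal (not minimal, as in the analogous Lemma~\ref{lem:cover-relations-recip} for the $s$-weak order) in order to rule out the case that $a$ lies to the left of $c$, where the contradiction uses the existence of a common ancestor $d>c$.
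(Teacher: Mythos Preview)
Your proof is correct and follows essentially the same two-direction template as the paper, but with a different organization of the converse. The paper packages the key step into Lemma~\ref{lem_Tamari_ascent_exist}: it first invokes Part~\ref{part1_lem:cover-relations-recip} of Lemma~\ref{lem:cover-relations-recip} (choosing $c$ maximal in $V$) to obtain a pair $(c,a)\in V$ with $a$ a descendant of $c$, and then replaces $a$ by the child $a'$ of $c$ on the path from $c$ to $a$, using the $s$-Tamari property of $T'$ to show $(c,a')\in V$. Your selection rule is different: you maximize $a$ first and then $c$, and argue directly that $a$ must already be a direct child of $c$. Both selections work; yours is self-contained and does not appeal to the $s$-weak-order lemma, while the paper's is more modular and reuses earlier work. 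For the forward direction (a rotation is a cover), the paper again applies Lemma~\ref{lem_Tamari_ascent_exist} to $T\woless T''$ and pins down $(a',c')=(a,c)$ via Lemma~\ref{lem_sTamari_rotation}, whereas your direct dichotomy on $\card_{T''}(c,a)\in\{j,j+1\}$ accomplishes the same thing more explicitly. One small quibble: your closing remark that in Lemma~\ref{lem:cover-relations-recip} the choice of $c$ is ``minimal'' is inaccurate---Part~\ref{part1_lem:cover-relations-recip} also takes $c$ maximal---but this does not affect the argument.
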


We need to show that an $s$-Tamari tree $T'$ covers $T$ in $s$-Tamari order if and only if there is a Tamari ascent $(a,c)$ of $T$ such that $\inv(T')= \tc{\left( \inv(T) + (c,a) \right)}$. 
The proof of this fact will follow from the following lemma. 

\begin{lemma}\label{lem_Tamari_ascent_exist}
Let $T$ and $T'$ be two $s$-Tamari trees for some weak composition $s$, with $T\wole T'$.
Then, there exists a $s$-Tamari ascent $(a,c)$ of $T$ such that $\card_{T'}(c,a)> \card_{T}(c,a)$.
\end{lemma}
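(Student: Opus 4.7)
The plan is to invoke Lemma~\ref{lem:cover-relations-recip} for the ambient $s$-weak order and then promote the resulting tree-ascent to a Tamari-ascent by ``climbing up'' the tree to a direct child of the root of the ascent.

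First, since $T \woless T'$ in the $s$-weak order, Lemma~\ref{lem:cover-relations-recip} produces a tree-ascent $(x,y)$ of $T$ with $\card_{T'}(y,x) > \card_T(y,x)$. Conditions~\eqref{cond:tree-ascent-desc} and~\eqref{cond:tree-ascent-non-final} of Definition~\ref{def:tree-ascent} tell us that $x$ is a descendant of $y$ in $T$ which does not lie in the right child of $y$. Let $b$ denote the unique direct child of $y$ that either equals $x$ or has $x$ as a descendant. Then $b$ is a non-right direct child of $y$, and hence $(b,y)$ is automatically a Tamari-ascent of $T$ by Definition~\ref{def:Tamari-ascent}.

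It remains to check that the cardinality of $(y,b)$ strictly increases when passing from $T$ to $T'$. The case $b=x$ is immediate; otherwise $x$ is a proper descendant of $b$ in $T$, so $x < b$ by the decreasing property, and since $b$ is a direct child of $y$, Definition~\ref{def:tree-inversions} gives $\card_T(y,b) = \card_T(y,x)$. On the other hand, since $T'$ is itself an $s$-Tamari tree and $x < b < y$, Definition~\ref{def:s-tam-tree} yields $\card_{T'}(y,x) \leq \card_{T'}(y,b)$. Chaining these (in)equalities,
\[
\card_{T'}(y,b) \;\geq\; \card_{T'}(y,x) \;>\; \card_T(y,x) \;=\; \card_T(y,b),
\]
which is exactly the conclusion of the lemma for the Tamari-ascent $(b,y)$.

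The entire argument is short, and the only subtle step is the final chain of inequalities, whose crux is that $T'$ must itself be an $s$-Tamari tree: without that hypothesis we would have no mechanism to transfer the strict increase in cardinality from $(y,x)$ up to $(y,b)$.
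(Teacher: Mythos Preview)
Your proof is correct and follows essentially the same route as the paper's: locate a pair whose cardinality strictly increases and for which the smaller element is a descendant of the larger, pass to the direct child $b$ of the larger node on that descending path, and then use the $s$-Tamari property of $T'$ together with $\card_T(y,b)=\card_T(y,x)$ to transfer the strict increase. The only cosmetic difference is that you invoke the full Lemma~\ref{lem:cover-relations-recip} (obtaining a genuine tree-ascent $(x,y)$, so that $b$ is automatically a non-right child of $y$), whereas the paper cites only Part~\ref{part1_lem:cover-relations-recip} of that proof (obtaining merely $a$ a descendant of $c$) and then argues separately that the child $a'$ is non-right because its cardinality is not yet maximal.
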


\begin{proof}
Let $V = \lbrace (c,a); \card_T(c,a) < \card_{T'}(c,a) \rbrace$, \emph{i.e.}, the tree-inversions of $T$ which cardinality have increased in $T'$. As $T \woless T'$, we know that $V \neq \emptyset$. 

As proved in Part~\ref{part1_lem:cover-relations-recip} of the proof of Lemma~\ref{lem:cover-relations-recip}, 
there is a $(c,a)\in V$ such that $a$ is a descendant of~$c$ in $T$. 
Let $a'$ be the child of $c$ that belongs to the unique path from $c$ to $a$ in the tree $T$. 
Note that~$(c,a')\in V$, otherwise
\[
\card_{T'}(c,a') = \card_T(c,a') = \card_T(c,a) < \card_{T'}(c,a),
\] 
which contradicts the fact that $T'$ is an $s$-Tamari tree. 

Since $\card_{T'}(c,a')>\card_T(c,a')$ then $a'$ is a non right child of $T$. 
Therefore, $(a'c)\in V$ is an $s$-Tamari ascent of $T$. 
\end{proof}

\begin{proof}[Proof of Theorem~\ref{thm_sTamari_covering_relations}]
We will show that $T'$ covers $T$ in $s$-Tamari order if and only if there is a Tamari ascent $(a,c)$ of $T$ such that $\inv(T')= \tc{\left( \inv(T) + (c,a) \right)}$. 

Let $T$ and $T'$ be two $s$-Tamari trees such that $T'$ covers $T$ in $s$-Tamari order. 
In particular, $T \wole T'$ and Lemma~\ref{lem_Tamari_ascent_exist} guaranties that there is a Tamari ascent $(a,c)$ of $T$ such that $\card_{T'}(c,a)>\card_T(c,a)$.
In particular, 
$\inv(T)+(c,a) \subseteq \inv(T')$
and by transitivity 
$\tc{\left( \inv(T) + (c,a) \right)} \subseteq \inv(T')$.
By Lemma~\ref{lem_sTamari_rotation}, we know that $\tc{\left( \inv(T) + (c,a) \right)}$ is an $s$-Tamari inversion set; Let $T''$ be its corresponding $s$-Tamari tree. 
We have that $T\woless T'' \wole T'$. By hyphothesis, $T'$ covers $T$, and so $T''=T'$. Therefore, $\inv(T')= \inv(T'')=\tc{\left( \inv(T) + (c,a) \right)}$ as desired.     

To prove the other direction, let $(a,c)$ be a Tamari ascent of an $s$-Tamari tree $T$, and let $T'$ be the rotated $s$-Tamari tree determined by $\inv(T')=\tc{\left( \inv(T) + (c,a) \right)}$. 
We need to prove that $T'$ covers $T$ in $s$-Tamari order. 
Suppose that there is an $s$-Tamari tree $T''$ such that $T \woless T'' \wole T'$.
By Lemma~\ref{lem_Tamari_ascent_exist}, there is a Tamari-ascent $(a',c')$ of $T$ such that $\card_{T}(c',a') < \card_{T''}(c',a') \leq \card_{T'}(c',a')$.  
From Lemma~\ref{lem_sTamari_rotation} we deduce that $c'=c$ and $a'=a$. 
This implies that $\inv(T) + (c,a) \subseteq \inv(T'')$.  
By transitivity, it follows that $\inv(T') \subseteq \inv(T'')$. Therefore $T' = T''$, which implies that $T'$ covers $T$.\end{proof}

\subsection{The $\nu$-Tamari lattice}
The main purpose of this section is to show that the $s$-Tamari lattice is closely related to the $\nu$-Tamari lattice introduced by Pr\'eville-Ratelle and Viennot 
in~\cite{PrevilleRatelleViennot}. The elements of the $\nu$-Tamari lattice are given by lattice paths that lie weakly above a fixed lattice path~$\nu$, and the covering relation can be explicitly described from such paths. 
An alternative description in terms of certain combinatorial objects called $\nu$-trees was presented in~\cite{ceballos_vtamarivtrees_2018}. 
We briefly recall these objects and refer to~\cite{ceballos_vtamarivtrees_2018} for more details. 

\subsubsection{The $\nu$-Tamari lattice in terms of $\nu$-paths}

Let $\nu$ be a lattice path on the plane consisting of a finite number of north and east unit steps, which are represented by the letters $N$ and $E$. For instance, the bottom path on the right of Figure~\ref{fig:bij-stam-nupaths} is $\nu=NEENEEN$.

\begin{definition}
A \defn{$\nu$-path} $\mu$ is a lattice path using north and east steps, that has the same endpoints as $\nu$ and is weakly above $\nu$. 
\end{definition}

The set of $\nu$-paths has a natural order structure that we now define.
For a lattice point $ p $ on a $\nu$-path~$\mu$, the \defn{horizontal distance} $ \horiz_\nu(p) $ is defined as the maximum number of horizontal steps that can be added to the right of $ p $ without crossing $\nu$.
We say that $p$ is a \defn{valley} of $\mu$ if it is preceded by an east step $E$ and followed by a north step $N$. 
Given a valley $p$ of $\mu$, consider the next lattice point $q$ of $\mu$ such that $\horiz_\nu(q) =  \horiz_\nu(p)$. Let $ \mu_{[p,q]} $ be the subpath of $\mu$ that starts at $p$ and ends at $q$, and let $\mu'$ be the path obtained from $\mu$ by switching the $E$ preceding $p$ and~$\mu_{[p,q]}$.
The path $\mu'$ is called the \defn{$\nu$-rotation} of $\mu$ at the valley $ p $, and we write $\mu \lessdot_\nu \mu'$. 

\begin{definition}
The \defn{$\nu$-Tamari lattice} is the poset on $\nu$-paths whose cover relations are $\nu$-rotations. 
\end{definition}

In~\cite{PrevilleRatelleViennot}, Pr\'eville-Ratelle and Viennot showed that this poset is a lattice.
The right hand side of Figure~\ref{fig:bij-stam-nupaths} shows an example of the $\nu$-Tamari lattice, for $\nu=NEENEEN$

\subsubsection{The $\nu$-Tamari lattice in terms of $\nu$-trees}\label{sec_nu_trees}
The $\nu$-Tamari lattice can be alternatively described in terms of $\nu$-trees~\cite{ceballos_vtamarivtrees_2018}.
We denote by $A_\nu$ the set of lattice points weakly above $\nu$ inside the smallest rectangle containing $\nu$. We say that two points $p,q \in A_\nu$ are \defn{$\nu$-incompatible} if and only if $p$ is southwest or northeast to $q$ and the south-east corner of the smallest rectangle containing $p$ and $q$ is weakly above~$\nu$. Otherwise, we say that $p$ and $q$ are \defn{$\nu$-compatible}.

\begin{definition}
A $\nu$-tree is a maximal collection of pairwise $\nu$-compatible elements in $A_\nu$.
\end{definition}

Although $\nu$-trees are just sets of lattice points above $\nu$, they can be regarded as planar binary trees in the classical sense, by connecting consecutive nodes (or lattice points) in the same row or column~\cite{ceballos_vtamarivtrees_2018}. 
The result is a planar binary tree with a root a the top left corner of~$A_\nu$.

A node $q\in T$ that lies below another node $p\in \vT$ and on the left of another node $r\in \vT$ is called a \defn{$\nu$-ascent} of $T$. The node $q$ may be exchanged by a node $q'$ located exactly on the right of $p$ and above~$r$, creating a new $\nu$-Tamari tree $\vT'$. This operation of exchanging $q$ by $q'$ is called a \defn{$\nu$-tree rotation}.

\begin{definition}
The \defn{rotation lattice of $\nu$-trees} is the poset on $\nu$-trees whose cover relations are given by $\nu$-tree rotations.  
\end{definition}

The right hand side of Figure~\ref{fig:bij-stam-nutrees} shows an example of the rotation lattice of $\nu$-trees, for $\nu=NEENEEN$. One can see that it is isomorphic to the $\nu$-Tamari lattice illustrated in Figure~\ref{fig:bij-stam-nupaths}. 

\begin{theorem}[{\cite{ceballos_vtamarivtrees_2018}}]
The $\nu$-Tamari lattice is isomorphic to the rotation lattice of $\nu$-trees.
\end{theorem}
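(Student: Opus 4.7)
The plan is to establish the isomorphism by constructing an explicit bijection $\Phi$ between $\nu$-trees and $\nu$-paths, and then verifying that $\Phi$ intertwines the two rotation operations. Since covering relations on both sides are given by rotations, once both conditions are established, $\Phi$ will automatically extend to an isomorphism of posets, and hence of lattices.

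First I would define $\Phi : \vT \mapsto \mu$ by reading the ``right envelope'' of a $\nu$-tree. Concretely, for each row of the rectangle containing $A_\nu$, record the column of the rightmost node of $\vT$ in that row; walking from the top-left corner to the bottom-right corner of $A_\nu$ and tracing the staircase determined by these columns produces a lattice path $\mu = \Phi(\vT)$. The inverse $\Phi^{-1}$ reads a $\nu$-path as a collection of lattice points by selecting, for each valley and each ``corner'' of $\mu$, a canonical lattice point above $\nu$. The first verification is that $\Phi(\vT)$ lies weakly above $\nu$: this follows because every node of $\vT$ belongs to $A_\nu$, which forces the right envelope never to cross below $\nu$. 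Checking that $\Phi^{-1}\circ \Phi = \mathrm{id}$ and $\Phi \circ \Phi^{-1} = \mathrm{id}$ is a routine combinatorial verification using the maximality of $\nu$-trees as pairwise $\nu$-compatible sets.

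The core of the argument is to show that $\Phi$ sends $\nu$-tree rotations to $\nu$-path rotations. Given a $\nu$-ascent $q$ of $\vT$ with partners $p$ (above) and $r$ (to the right), the $\nu$-tree rotation replaces $q$ by the point $q'$ located directly right of $p$ and above $r$. On the path side, the node $q$ corresponds to a specific valley of $\mu = \Phi(\vT)$, and the partners $p,r$ determine the subpath $\mu_{[q,q']}$ that is to be moved. One verifies, by a local analysis comparing horizontal distances before and after the exchange, that the valley of $\mu$ associated with $q$ is precisely the valley at which the $\nu$-path rotation must be performed, and that the resulting path equals $\Phi(\vT')$, where $\vT'$ is the rotated tree. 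This step is the main obstacle, since it requires tracking how the horizontal distances $\horiz_\nu(\cdot)$ of lattice points along $\mu$ change under the exchange, and matching them with the local change in the tree; the argument proceeds by a case analysis on the relative position of $q$, $p$, $r$ in the rectangle $A_\nu$.

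Finally, because the cover relations in both posets are in bijection under $\Phi$, and because both posets are connected graded posets on sets of the same cardinality with the same rank (the number of rotations needed to reach the top element), $\Phi$ is an isomorphism of posets. As both are known to be lattices (the $\nu$-Tamari lattice by~\cite{PrevilleRatelleViennot}, and the rotation lattice of $\nu$-trees by its definition via $\Phi$ as a transfer of structure), $\Phi$ is an isomorphism of lattices.
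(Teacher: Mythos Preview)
This theorem is not proved in the present paper; it is quoted from~\cite{ceballos_vtamarivtrees_2018}. The only thing the paper adds is a description of the bijection used there, namely the \emph{right flushing bijection}: a $\nu$-path $\mu$ is sent to the unique $\nu$-tree having the same number of nodes as $\mu$ has lattice points at each height, built by placing nodes from bottom to top and right to left while avoiding forbidden positions. No argument for the rotation correspondence is given here; that is deferred to the cited reference.

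Your proposal takes a different route, and as written it has real gaps. First, your map $\Phi$ (the ``right envelope'') is not the right flushing bijection and, as you describe it, is not obviously well defined: $\nu$-paths run from the bottom-left to the top-right corner of the rectangle, not from the top-left to the bottom-right, and recording only the column of the rightmost node in each row discards the information of how many nodes lie in that row, which is exactly what determines the number of east steps of the path at that height. In particular, two distinct $\nu$-trees can share the same rightmost column in every row, so your $\Phi$ is not injective as stated. Second, your description of $\Phi^{-1}$ (``selecting, for each valley and each corner of $\mu$, a canonical lattice point above $\nu$'') never specifies the rule, so the claimed verification that $\Phi^{-1}\circ\Phi=\mathrm{id}$ is not a routine check but the entire content of the construction. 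Third, the rotation correspondence is only asserted; the ``local analysis comparing horizontal distances'' is precisely the substantive part of the proof in~\cite{ceballos_vtamarivtrees_2018}, and it depends on using the correct bijection.

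If you want to reconstruct the argument, the cleanest route is the one the paper sketches: define the map so that it preserves the height profile (number of points per row), show this forces a unique $\nu$-tree via the right-flushing procedure, and then verify that a $\nu$-rotation at a valley changes the height profile in exactly the way a $\nu$-tree rotation does.
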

 
A bijection between these two lattices is given by the \defn{right flushing bijection} from~\cite{ceballos_vtamarivtrees_2018}. This bijection maps a $\nu$-path $\mu$ to the unique $\nu$-tree $T$ such that $\mu$ and $T$ have the same number of lattice points at each height. 
The $\nu$-tree $T$ can be constructed recursively as follows.
Denote by $h_i$ the number of lattice points of $\mu$ at height $i$. Construct $T$ by adding $h_i$ nodes at height~$i$ from bottom to top, from right to left, avoiding forbidden positions. The forbidden positions are those above a node that is not the left most node in a row (these come from the initial points of the east steps in the path $\mu$). 
We refer to~\cite{ceballos_vtamarivtrees_2018} for details on why this works.
In particular, note that the number of lattice points in every $\nu$-path is equal to the number of nodes in every $\nu$-tree (equals to the number of steps of $\nu$ plus one).

\subsection{The $s$-Tamari lattice and the $\nu$-Tamari lattice are isomorphic}
\label{sec_sTamarivTamari}

Now that we have all the required definitions, we are ready to show that the $s$-Tamari lattice introduced in this paper is isomorphic to a well chosen $\nu$-Tamari lattice. 

\subsubsection{Isomorphism with the $\nu$-Tamari lattice}
Let $s=(s(1),\dots,s(n))$ be a weak composition and~$\nu(s)$ be the lattice path $\nu(s):=NE^{s(1)}\dots NE^{s(n)}$. 
We denote by $\reverse s$ the reversed sequence $(s(n),\dots,s(2),s(1))$. 
We now present a bijection between $s$-Tamari trees and $\nu(\reverse s)$-paths which determines an isomorphism of their corresponding lattices. For simplicity, we fix $\nu=\nu(\reverse s)$ throughout this section. As a warm up, note that the number of nodes in every $s$-tree is equal to the number of nodes in every $\nu$-tree for this choice of $\nu$.

Let $T$ be an $s$-Tamari tree. We say that its nodes (internal nodes and leaves) are traversed in \emph{reversed preorder} if we visit the root $r$ first and then visit in reversed preorder each of the trees $T^r_i$ such that~$T^r_i$ is visited before~$T^r_j$ for $i>j$.  
We label the internal nodes of $T$ with $N$ and the leaves with $E$, see Figure~\ref{fig:bij-stam-nupaths}. Then, we read the labels using the reverse preorder transversal of $T$, and omit the last appearance of $E$. 
We denote by $\streestovpaths(T)$ te resulting path.   

\begin{figure}[htbp]
\begin{center}
\begin{tabular}{cc}
\includegraphics[height= 8.5cm]{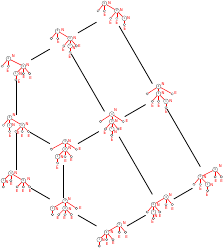}
&
\includegraphics[height= 8.5cm]{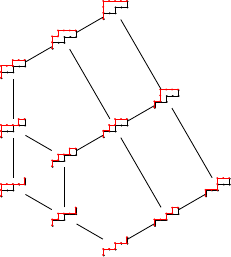}
\end{tabular}
\caption{Bijection between the $s$-Tamari lattice and the $\nu(\protect\reverse{s})$-Tamari lattice for $s=(0,2,2)$.}
\label{fig:bij-stam-nupaths}
\end{center}
\end{figure}

For instance, if $T_0$ is the bottom $s$-Tamari tree on the left hand side of Figure~\ref{fig:bij-stam-nupaths}, then we have that~$\streestovpaths(T_0)=NEENEEN$. If $T_1$ is the top $s$-Tamari tree in the same figure, then $\streestovpaths(T_1)=NNNEEEE$. These two paths are the bottom and top elements of the $\nu$-Tamari lattice for $\nu=\nu(\reverse s)$. In fact, we have the following theorem.

\begin{theorem}\label{thm_sTam_vTam_paths}
The map $\streestovpaths$ is an isomorphism between the $s$-Tamari lattice and the $\nu(\reverse s)$-Tamari lattice.
\end{theorem}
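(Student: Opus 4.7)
The plan is to establish the theorem in three stages: (i) $\streestovpaths$ sends $s$-Tamari trees to valid $\nu(\reverse s)$-paths; (ii) $\streestovpaths$ is a bijection; (iii) $s$-Tamari rotations correspond to $\nu$-path rotations. All three stages rest on a recursive decomposition of $s$-Tamari trees: if $T$ has root $n$ with subtrees $T^n_0,\ldots,T^n_{s(n)}$, the $s$-Tamari condition (Definition~\ref{def:s-tam-tree}) forces the labels of $T^n_i$ to form a consecutive interval $\{a_i+1,\ldots,a_{i+1}\}$ with $0=a_0\leq a_1\leq\cdots\leq a_{s(n)+1}=n-1$, and each $T^n_i$ is itself an $s^{(i)}$-Tamari tree for $s^{(i)}=(s(a_i+1),\ldots,s(a_{i+1}))$. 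Reading in reversed preorder yields
\[
\streestovpaths(T)\cdot E \;=\; N\cdot Q_{s(n)}\cdot Q_{s(n)-1}\cdots Q_0,
\]
where $Q_i=\streestovpaths(T^n_i)\cdot E$ if $T^n_i$ is non-empty and $Q_i=E$ otherwise. This mirrors the factorization $\nu(\reverse s)=N E^{s(n)}\nu(\reverse{s'})$ with $s'=(s(1),\ldots,s(n-1))$.

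Using this factorization, stages (i) and (ii) follow by induction on $n$. A direct count confirms that $\streestovpaths(T)$ has $n$ letters $N$ and $|s|$ letters $E$. The inductive ``weakly above'' inequalities for the $Q_i$'s combine with the right-to-left arrangement of the subtrees (which packs all the $N$'s as far to the left as possible) to produce the global inequality $\streestovpaths(T)\geq \nu(\reverse s)$. The inverse is built recursively: given a $\nu(\reverse s)$-path $\mu$, its first letter must be $N$ and produces the root $n$, while the subsequent $N$'s together with the ``weakly above'' condition determine the boundaries of the children subtrees; the condition precisely ensures that the resulting decreasing tree satisfies the $s$-Tamari condition. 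Alternatively, one may factor $\streestovpaths$ through the right-flushing bijection of~\cite{ceballos_vtamarivtrees_2018} between $\nu(\reverse s)$-trees and $\nu(\reverse s)$-paths, reducing the bijection to an identification of $s$-Tamari trees with $\nu(\reverse s)$-trees via a map $\streestovtrees$.

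For stage (iii), Theorem~\ref{thm_sTamari_covering_relations} identifies cover relations of the $s$-Tamari lattice with $s$-Tamari rotations, which exchange a non-right child $a$ (root of $T^c_i$ with $i<s(c)$) with its right-neighbor subtree $T^c_{i+1}$, the non-left descendants of $a$ moving along with $a$. In the reversed preorder of $T$, the letters recorded for $T^c_{i+1}$ appear immediately before those recorded for $T^c_i$, separated by a leaf ($E$) of $c$; the $s$-Tamari rotation transforms $\streestovpaths(T)$ by transferring the block encoding ``$a$ together with its non-left descendants'' from the $T^c_i$ block to the $T^c_{i+1}$ block, which is exactly a $\nu$-rotation of paths at the valley between the two blocks. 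The main obstacle is verifying that the valley's horizontal distance matches the end of the migrating block, so that the $\nu$-rotation produces precisely $\streestovpaths(T')$. This identification will be made by unfolding the recursive decomposition and using the planarity and transitivity conditions exploited in Lemma~\ref{lem_sTamari_rotation}.
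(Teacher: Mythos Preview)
Your three-stage plan is sound and the recursive factorization
\[
\streestovpaths(T)\cdot E \;=\; N\cdot Q_{s(n)}\cdots Q_0
\]
is correct and useful. Stages (i) and (ii) go through by the induction you describe (and you even note the shortcut via $\nu$-trees and the right-flushing bijection). The approach is genuinely different from the paper's, which is worth a brief comparison.

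The paper does not argue recursively. Instead it imports from~\cite{ceballos_signature_2018} a single pointwise formula: for any node $a$ that is a child of $c$,
\[
\horiz_\nu(\streestovpaths(a)) \;=\; \horiz_\nu(\streestovpaths(c)) + \card_T(c,a).
\]
Non-negativity of the horizontal distance then gives (i) at once, and the bijection (ii) is quoted from the same reference. For (iii) this formula dissolves precisely the obstacle you name: the valley in $\streestovpaths(T)$ is $\streestovpaths(a)$, and the next lattice point with the same horizontal distance is $\streestovpaths(a')$ where $a'$ is the \emph{left} child of $a$ (since $\card_T(a,a')=0$, while every non-left descendant strictly increases the distance). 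Thus the subpath swapped by the $\nu$-rotation is exactly the reversed-preorder block of $a$ together with its non-left descendants, which is the $s$-Tamari rotation of Lemma~\ref{lem_sTamari_rotation}.

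What each approach buys: your recursive decomposition is self-contained and makes the structure of $\streestovpaths$ transparent; it would also adapt well to arguments about intervals or subpaths. The paper's pointwise formula is shorter and, crucially, turns your ``main obstacle'' in stage (iii) into a one-line observation. In your write-up you still owe the reader that verification; the cleanest way to pay it off, even within your framework, is to prove the horizontal-distance identity above (it follows by induction along the path from the root to $a$, using your factorization) and then argue exactly as the paper does.
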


\begin{remark}
The proof of this theorem follows essentially from the work on signature Catalan combinatorics in~\cite{ceballos_signature_2018}.  In that paper, the authors use the preorder transversal on trees in order to describe many bijections in the zoo of signature Catalan objects indexed by a signature $s=(s(1),\dots , s(n))$. 

A main family in this zoo is the family of what they call ``\emph{$s$-trees}".  
These are planar rooted trees $T$ with ``signature" $s$, where the signature is defined as 
\[
\operatorname{signat} (T)=
(\operatorname{deg}(v_1),\dots , \operatorname{deg}(v_n)),
\]
where $v_1,\dots,v_n$ are the internal nodes of $T$ read in preorder, and $\operatorname{deg}(v)$ is the number of children of $v$. 
Note that here there is a small difference with our paper, because our signature counts the number of children of the internal nodes minus one. 

Our $s$-Tamari trees are exactly the mirror images (flip horizontally) of the $(\reverse{s+\bf{1}})$-trees in~\cite{ceballos_signature_2018}.\footnote{Here $(s+\bf{1})$ means that we add one to each of the entries of $s$.} 
Indeed, if we read the number of children of the internal nodes of an $s$-Tamari tree in reversed preorder, then we get the sequence $(s(n)+1,\dots , s(2)+1,s(1)+1)$. So, taking its mirror image we get a $(\reverse{s+\bf{1}})$-tree. 
Viceversa, if we start with the mirror image of an $(\reverse{s+\bf{1}})$-tree and label the internal nodes with the numbers $n,\dots , 2,1$ in reversed preorder, we get an $s$-decreasing tree that is an~$s$-Tamari tree. 

The bijection $\streestovpaths$ in Theorem~\ref{thm_sTam_vTam_paths} is related to the bijection between $s$-trees and $s$-Dyck paths in~\cite{ceballos_signature_2018}, and our Theorem will be implied by some of their observations. 
\end{remark}

\begin{proof}[Proof of Theorem~\ref{thm_sTam_vTam_paths}]
For simplicity, let $\nu=\nu(\reverse s)$.
Let $T$ be an $s$-Tamari tree, and let $a,c$ be two nodes of $T$ such that $a$ is a child of $c$.
We denote by $\streestovpaths(a),\streestovpaths(c)$ the corresponding lattice points in the path $\streestovpaths(T)$.
More precisely, the root of $T$ corresponds to the initial lattice point of $\streestovpaths(T)$; in general, the nodes of $T$, read in reverse preorder, correspond one-to-one to the lattice points of $\streestovpaths(T)$ when read from the start to the end. Note that we are abusing of notation here for simplicity, because $\streestovpaths(a)$ depends also on $T$ and not only on $a$.

By \cite[Lemma~3.3]{ceballos_signature_2018}, it follows that 
\begin{align}\label{eq_horiz_increase}
\horiz_\nu(\streestovpaths(a)) = \horiz_\nu(\streestovpaths(c)) + \card_T (c,a).
\end{align}
In other words, the $i$-th child of $c$ increases the horizontal distance by $i$. The reader is invited to check this property in Figure~\ref{fig:bij-stam-nupaths}.

This implies, in particular, that the horizontal distance of each lattice point in the path $\streestovpaths(T)$ is non-negative. This shows that $\streestovpaths(T)$ is a $\nu$-path (a path weakly above $\nu$).
Moreover, $\streestovpaths$ is a bijection between the set of $s$-Tamari trees and the set of $\nu$-paths, as shown in~\cite[Theorem~3.6]{ceballos_signature_2018}.
Our main contribution is to show that the cover relations in the $s$-Tamari lattice are transformed to cover relations in the $\nu$-Tamari lattice. This was not shown in~\cite{ceballos_signature_2018}. 

Assume that $(a,c)$ is a Tamari ascent of $T$ (that is, $a$ is a non-right child of $c$). And let $T'$ be the $s$-Tamari tree obtained by applying the $s$-Tamari rotation at $(a,c)$ in $T$. In other words, $T \wole T'$ is a cover relation of the $s$-Tamari lattice. 
We want to show that $\streestovpaths(T')$ covers $\streestovpaths(T)$ in the $\nu$-Tamari lattice.

First, observe that $\streestovpaths(a)$ is a valley of $\streestovpaths(T)$. 
Indeed, $a$ is an internal node of $T$ and so $\streestovpaths(a)$ is followed by a north step, and since $a$ is a non-right child of $c$ then $\streestovpaths(a)$ is necessarily preceded by an east step. 
The next lattice point in $\streestovpaths(T)$ that has the same horizontal distance as $\streestovpaths(a)$ is $\streestovpaths(a')$, where $a'$ is the left child of $a$ in $T$. 
The path between $\streestovpaths(a)$ is $\streestovpaths(a')$ in $\streestovpaths(T)$ consists of the lattice points $\streestovpaths(x)$ for $x\in A$, where $A$ is the set consisting of $a$ and all its descendants that are not in its left leg.
 Performing a $\nu$-rotation at the valley $\streestovpaths(a)$ of $\streestovpaths(T)$ corresponds exactly to performing the $s$-Tamari rotation of $T$ at the Tamari-ascent~$(a,c)$.
 
We have shown that $s$-Tamari rotations are transformed to $\nu$-Tamari rotations under the bijection $\streestovpaths$. It remains to show that every $\nu$-Tamari rotation arises this way. 
For this, note that if $\streestovpaths(a)$ is a valley then it is followed by a north step, which implies that $a$ is an internal node of $T$. Since $\streestovpaths(a)$ is preceded by an east step, then necessarily $a$ is the non-right child of some node $c$. Using the same argument as above, the $\nu$-rotation at the valley $\streestovpaths(a)$ in $\streestovpaths(T)$ corresponds to the $s$-Tamari rotation of $T$ at the Tamari-ascent~$(a,c)$.
\end{proof}

\subsubsection{Isomorphism with the rotation lattice of $\nu$-trees}
Composing the bijection $\streestovpaths$ with the right-flushing bijection described in Section~\ref{sec_nu_trees}, we obtain a bijection $\streestovtrees$ from the set of $s$-Tamari trees to the set of~$\nu(\reverse s)$-trees,  which gives an isomorphism of the corresponding lattices. The image $\streestovtrees(T)$ of an $s$-Tamari tree $T$ can be described purely in terms of $T$ as follows. 

Let $T$ be an $s$-Tamari tree. We use the reverse preorder transversal to label the nodes of $T$ with the numbers $0,1,\dots ,n$, such that a node $x$ has label $i$ if the number of internal nodes traversed striktly before $x$ is equal to $i$. We denote by $\streestovtrees(T)$ the unique $\nu$-tree containing as many nodes at height~$i$ as there are nodes in $T$ with label $i$. Such $\nu$-tree can be uniquely constructed using the right flushing algorithm described in Section~\ref{sec_nu_trees}. 
See Figure~\ref{fig:bij-stam-nutrees} for an illustration.

\begin{corollary}\label{thm_sTam_vTam_trees}
The map $\streestovtrees$ is an isomorphism between the $s$-Tamari lattice and the rotation lattice of~$\nu(\reverse s)$-trees.
\end{corollary}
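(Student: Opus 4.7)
The plan is to derive this corollary directly from Theorem~\ref{thm_sTam_vTam_paths} by composition with the right-flushing bijection, and then to check that the resulting composition matches the explicit combinatorial recipe described above the statement. Since Theorem~\ref{thm_sTam_vTam_paths} asserts that $\streestovpaths$ is an isomorphism between the $s$-Tamari lattice and the $\nu(\reverse s)$-Tamari lattice on $\nu$-paths, and the right-flushing map is a lattice isomorphism from the $\nu$-Tamari lattice on paths to the rotation lattice of $\nu$-trees (Section~\ref{sec_nu_trees} and \cite{ceballos_vtamarivtrees_2018}), composing the two yields a lattice isomorphism, which is what we need.

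The only remaining thing to justify is that the composition really is the map $\streestovtrees$ described just before the corollary. The key identification is between the reverse preorder label of a node and a height. Recall that in producing $\streestovpaths(T)$ we read the nodes of $T$ in reverse preorder, writing $N$ for internal nodes and $E$ for leaves, and then drop the final~$E$; so the $k$-th lattice point of $\streestovpaths(T)$ corresponds to the $k$-th node in reverse preorder, and its height in the path equals the number of $N$'s that appear strictly before position $k$, i.e.\ the number of internal nodes visited strictly before that node. This is precisely the label $i$ assigned to the node in the construction of $\streestovtrees(T)$. Consequently, the number of nodes of $T$ with label $i$ equals the number of lattice points of $\streestovpaths(T)$ at height $i$.

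By the characterization of the right-flushing bijection recalled in Section~\ref{sec_nu_trees}, the right-flushing of a $\nu$-path $\mu$ is the unique $\nu$-tree containing as many nodes at each height~$i$ as $\mu$ has lattice points at height~$i$. Hence the right-flushing of $\streestovpaths(T)$ coincides with the $\nu$-tree produced by the recipe for $\streestovtrees(T)$, so $\streestovtrees$ equals the composition and is therefore a lattice isomorphism. The only mildly delicate point is the bookkeeping of the ``dropped final~$E$'', but this is harmless since the right-flushing construction uses only the heights, and the total number of lattice points of $\streestovpaths(T)$ matches the total number of nodes of any $\nu(\reverse s)$-tree (namely the number of steps of $\nu(\reverse s)$ plus one), as noted in Section~\ref{sec_nu_trees}.
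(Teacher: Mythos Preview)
Your proof is correct and follows essentially the same approach as the paper: both argue that $\streestovtrees$ is the composition of $\streestovpaths$ with the right-flushing bijection by checking that the number of nodes labeled $i$ in $T$ equals the number of lattice points at height $i$ in $\streestovpaths(T)$, and then conclude by composing the two known isomorphisms. Your version is simply more detailed in the bookkeeping (e.g.\ the remark on the dropped final $E$), which the paper leaves implicit.
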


\begin{proof}
For simplicity, let $\nu=\nu(\reverse s)$.
The number of lattice points at height $i$ in the $\nu$-path $\streestovpaths(T)$ is equal to the number of nodes labeled~$i$ in $T$ (because every north step/internal node of $T$ increases the height by one). 
Therefore, $\streestovtrees(T)$ is the right flushing bijection applied to the $\nu$-path~$\streestovpaths(T)$.
The result follows.
\end{proof}

\begin{figure}[htbp]
\begin{center}
\begin{tabular}{cc}
\includegraphics[height= 8.5cm]{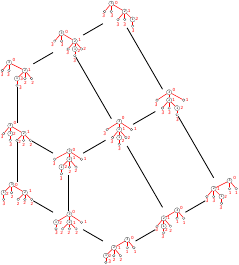}
&
\includegraphics[height= 8.5cm]{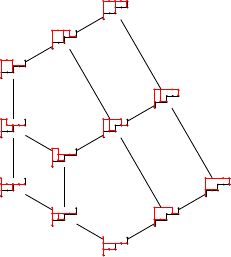}
\end{tabular}
\caption{Bijection between the $s$-Tamari lattice and the rotation lattice of~\mbox{$\nu(\protect\reverse{s})$-trees} for $s=(0,2,2)$.}
\label{fig:bij-stam-nutrees}
\end{center}
\end{figure}

\subsection{The $s$-Tamari lattice is polygonal, semidistributive and congruence uniform}

In contrast to the lattice properties that we proved for the $s$-weak order, it is straight forward to show that the $s$-Tamari lattice is polygonal, semidistributive and congruence uniform. 

For the congruence uniform property, recall that 
if a lattice is congruence uniform then every sublattice of it is congruence uniform as well~\cite[Theorem (4.3)]{Day79}.  
Since the $s$-Tamari lattice is a sublattice of the $s$-weak order, which is congruence uniform (Theorem~\ref{thm:hh}), it follows that the $s$-Tamari lattice is congruence uniform, hence semidistributive. 
Furthermore, the congruence uniform property of the $s$-Tamari lattice also follows directly from their equivalence with the $\nu$-Tamari lattice (Theorem~\ref{thm_sTam_vTam_paths}). In fact, as proved in~\cite{PrevilleRatelleViennot}, every $\nu$-Tamari lattice is an interval of a larger Tamari lattice (in particular, a sublattice). Since Tamari lattices are known to be congruence uniform, then $\nu$-Tamari lattices are congruence uniform as well.

The polygonality property of the $s$-Tamari lattice also follows from its connection to the $\nu$-Tamari lattice. Indeed, if a lattice is polygonal then all of its intervals are polygonal as well (since the conditions for polygonality are conditions about intervals). Therefore, since the Tamari lattice is known to be polygonal, then the $\nu$-Tamari lattice (and thus the $s$-Tamari lattice) is polygonal as well.


Furthermore, all these properties can be proven directly by showing that the $s$-Tamari lattice is an $\HH$-lattice using similar techniques as in Section~\ref{subsec:cong-uniform}. As far as we know, the $\HH$-lattice property was not even known for the classical Tamari lattice.

\begin{theorem}
The $s$-Tamari lattice is an $\HH$ lattice. Hence, it is polygonal, semidistributive and congruence uniform.
\end{theorem}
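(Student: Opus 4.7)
My plan is to mimic the strategy used for Theorem~\ref{thm:hh}, where three ingredients must be assembled: polygonality, semidistributivity, and an $\HH$-labeling. Two of these can be imported for free. \emph{Polygonality} is inherited through Theorem~\ref{thm_sTam_vTam_paths}: the $s$-Tamari lattice is isomorphic to a $\nu(\reverse s)$-Tamari lattice which, by Pr\'eville-Ratelle and Viennot, is an interval of a classical Tamari lattice. Since being polygonal is inherited by intervals (the two defining conditions of Definition~\ref{def:polygonal} concern intervals of the form $[x, y_1\join y_2]$ and $[x_1\meet x_2,y]$), and classical Tamari lattices are polygonal with only squares and pentagons as polygons, so is the $s$-Tamari lattice. \emph{Semidistributivity} follows from Theorem~\ref{thm:stam-sublattice} together with Proposition~\ref{prop:semidistributive}: the defining implications of semidistributivity are preserved by sublattices, and the $s$-Tamari lattice is a sublattice of the semidistributive $s$-weak order.

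Next I would introduce the labeling function and the ranking function exactly as for the $s$-weak order. For a covering relation $T \wocover T'$ in the $s$-Tamari lattice, Theorem~\ref{thm_sTamari_covering_relations} provides a unique Tamari-ascent $(a,c)$ such that $T'$ is obtained from $T$ by the $s$-Tamari rotation at $(a,c)$. Set $\ell(T,T') := (a,c)$ and $r(a,c) := c-a$.

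It then remains to check the two $\HH$-conditions on every polygon. Because only squares and pentagons occur, the analysis is a simpler version of the polygon classification used in Proposition~\ref{prop:polygons}. Concretely, for $T \wocover Z$ and $T \wocover Q$ obtained by $s$-Tamari rotations at Tamari-ascents $(a,b)$ and $(c,d)$ respectively with $a<c$, I would use the explicit inversion description of Lemma~\ref{lem_sTamari_rotation} to identify the Tamari-ascents responsible for each remaining edge of the interval $[T, Z\join Q]$. In the square case both maximal chains are labeled $(a,b),(c,d)$ and $(c,d),(a,b)$, fulfilling condition~\eqref{cond:HH-labels} trivially. In the pentagon case $b=c$ and one of the two chains has length three with labels $(a,c),(a,d),(c,d)$ while the other has labels $(c,d),(a,c)$; condition~\eqref{cond:HH-labels} is satisfied because the label of the bottom edge on one side equals the label of the top edge on the opposite side. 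Finally, on any length-three chain $(t_1,t_2,t_3)$, the labels are a permutation of $\{(a,c),(a,d),(c,d)\}$ with $t_2=(a,d)$, and the inequalities $r(a,c)=c-a<d-a=r(a,d)$ and $r(c,d)=d-c<d-a=r(a,d)$ give condition~\eqref{cond:HH-ranks}.

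The main obstacle is the combinatorial analysis of polygons at the level of Tamari-ascents. The abstract polygon classification is known (squares and pentagons only), but translating it back to the language of $s$-Tamari rotations so that the opposite-edge matching can be read off mechanically requires a Lacina-type exhaustive case check analogous to the one encoded in Proposition~\ref{prop:polygons}, now specialized to Tamari-ascents and to the simpler rotation rule of Lemma~\ref{lem_sTamari_rotation}. Once this bookkeeping is in place, the $\HH$-conditions hold by inspection and the congruence uniformity, semidistributivity, and polygonality are immediate consequences.
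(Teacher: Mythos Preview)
Your proposal is correct and follows essentially the same approach as the paper: import polygonality and semidistributivity, then define the labeling $\ell(T,T')=(a,c)$ by the Tamari-ascent and the rank $r(a,c)=c-a$, and verify the $\HH$-conditions on the square and pentagon polygons. One small correction: the paper (via Lacina's classification) identifies the unique pentagon shape arising in the $s$-Tamari lattice as case~\eqref{cond:polygons-pentagon-right} of Proposition~\ref{prop:polygons} rather than case~\eqref{cond:polygons-pentagon-left}, so the length-three chain is labeled $(c,d),(a,d),(a,c)$ from bottom to top; this does not affect your verification of conditions~\eqref{cond:HH-labels} and~\eqref{cond:HH-ranks}, since $t_2=(a,d)$ either way.
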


\begin{proof}
We omit the full proof as it is very similar to the proof of Theorem~\ref{thm:hh}, and also relies on the results of~\cite{SWeakSB}. Basically, it is shown in~\cite{SWeakSB} that using $s$-Tamari ascents instead of tree-ascents, the $s$-Tamari lattice has two possible polygons: a square or a pentagon corresponding respectively to cases~\eqref{cond:polygons-square} and~\eqref{cond:polygons-pentagon-right} of Proposition~\ref{prop:polygons}. Using a similar labeling as in the $s$-weak order case (labels are given by Tamari ascents instead of tree-ascents), we obtain the $\HH$ property directly.
\end{proof}

\section*{Acknowledgements}

All computations and computer exploration needed for the paper have been done using SageMath~\cite{SageMath2022}. We would also like to thank Nathan Reading and Henry M\"uhle for answering our questions and providing references concerning congruence uniformity.

\bibliographystyle{alphaurl}
\bibliography{../draft}

\begin{thebibliography}{BMCPR13}

\bibitem[AS06]{AguiarSottile_StructureLodayRonco}
Marcelo Aguiar and Frank Sottile.
\newblock Structure of the {L}oday-{R}onco {H}opf algebra of trees.
\newblock {\em J. Algebra}, 295(2):473--511, 2006.
\newblock \href {https://doi.org/10.1016/j.jalgebra.2005.06.021}
  {\path{doi:10.1016/j.jalgebra.2005.06.021}}.

\bibitem[BCP22]{HopfDreams_2022}
Nantel Bergeron, Cesar Ceballos, and Vincent Pilaud.
\newblock Hopf dreams and diagonal harmonics.
\newblock {\em J. Lond. Math. Soc. (2)}, 105(3):1546--1600, 2022.
\newblock \href {https://doi.org/10.1112/jlms.12541}
  {\path{doi:10.1112/jlms.12541}}.

\bibitem[BMCPR13]{bousquet_representation_2013}
Mireille Bousquet-M\'{e}lou, Guillaume Chapuy, and Louis-Fran\c{c}ois
  Pr\'{e}ville-Ratelle.
\newblock The representation of the symmetric group on {$m$}-{T}amari
  intervals.
\newblock {\em Adv. Math.}, 247:309--342, 2013.
\newblock \href {https://doi.org/10.1016/j.aim.2013.07.014}
  {\path{doi:10.1016/j.aim.2013.07.014}}.

\bibitem[BMFPR11]{bousquet_number_2011}
Mireille Bousquet-M\'{e}lou, \'{E}ric Fusy, and Louis-Fran\c{c}ois
  Pr\'{e}ville-Ratelle.
\newblock The number of intervals in the {$m$}-{T}amari lattices.
\newblock {\em Electron. J. Combin.}, 18(2):Paper 31, 26, 2011.
\newblock \href {https://doi.org/10.37236/2027} {\path{doi:10.37236/2027}}.

\bibitem[BP12]{BPR12}
F.~Bergeron and L.-F. {Pr{\'e}ville-Ratelle}.
\newblock Higher trivariate diagonal harmonics via generalized {{Tamari}}
  posets.
\newblock {\em J. Comb.}, 3(3):317--341, 2012.
\newblock \href {https://doi.org/10.4310/JOC.2012.v3.n3.a4}
  {\path{doi:10.4310/JOC.2012.v3.n3.a4}}.

\bibitem[BW97]{bjorner_wachs_shellableII_1997}
Anders Bj\"{o}rner and Michelle~L. Wachs.
\newblock Shellable nonpure complexes and posets. {II}.
\newblock {\em Trans. Amer. Math. Soc.}, 349(10):3945--3975, 1997.
\newblock \href {https://doi.org/10.1090/S0002-9947-97-01838-2}
  {\path{doi:10.1090/S0002-9947-97-01838-2}}.

\bibitem[Cas00]{caspard_latticepermutations_2000}
Nathalie Caspard.
\newblock The lattice of permutations is bounded.
\newblock {\em Internat. J. Algebra Comput.}, 10(4):481--489, 2000.
\newblock \href {https://doi.org/10.1142/S0218196700000182}
  {\path{doi:10.1142/S0218196700000182}}.

\bibitem[CdPBM04]{CasparPBM2004}
Nathalie Caspard, Claude Le~Conte de~Poly-Barbut, and Michel Morvan.
\newblock Cayley lattices of finite coxeter groups are bounded.
\newblock {\em Advances in Applied Mathematics}, 33(1):71--94, 2004.
\newblock \href {https://doi.org/https://doi.org/10.1016/j.aam.2003.09.002}
  {\path{doi:https://doi.org/10.1016/j.aam.2003.09.002}}.

\bibitem[CGD19]{ceballos_signature_2018}
Cesar Ceballos and Rafael~S. Gonz\'alez~D'Le\'on.
\newblock Signature {Catalan} combinatorics.
\newblock {\em Journal of Combinatorics}, 10(4):725--773, 2019.
\newblock \href {https://doi.org/https://doi.org/10.4310/JOC.2019.v10.n4.a6}
  {\path{doi:https://doi.org/10.4310/JOC.2019.v10.n4.a6}}.

\bibitem[Cha07]{chapoton_tamari_intervals_2005}
F.~Chapoton.
\newblock Sur le nombre d'intervalles dans les treillis de {T}amari.
\newblock {\em S\'{e}m. Lothar. Combin.}, 55:Art. B55f, 18, 2005/07.

\bibitem[CP19]{FPSAC2019}
C.~Ceballos and V.~Pons.
\newblock {The s-weak order and s-permutahedra}.
\newblock In {\em {31th International Conference on Formal Power Series and
  Algebraic Combinatorics (FPSAC 2019)}}, volume 82B, page Art. 76.
  {S\'eminaire Lotharingien de Combinatoire}, 2019.

\bibitem[CPS20]{ceballos_vtamarivtrees_2018}
Cesar Ceballos, Arnau Padrol, and Camilo Sarmiento.
\newblock The {$\nu$}-{T}amari lattice via {$\nu$}-trees, {$\nu$}-bracket
  vectors, and subword complexes.
\newblock {\em Electron. J. Combin.}, 27(1):Paper No. 1.14, 31, 2020.
\newblock \href {https://doi.org/https://doi.org/10.37236/8000}
  {\path{doi:https://doi.org/10.37236/8000}}.

\bibitem[DA94]{DuquenneCherfouh94}
V.~Duquenne and Cherfouh A.
\newblock On permutation lattices.
\newblock {\em Mathematical Social Sciences}, 27(1):73 -- 89, 1994.
\newblock \href
  {https://doi.org/http://dx.doi.org/10.1016/0165-4896(94)00733-0}
  {\path{doi:http://dx.doi.org/10.1016/0165-4896(94)00733-0}}.

\bibitem[Day77]{day_splitting_1977}
Alan Day.
\newblock Splitting lattices generate all lattices.
\newblock {\em Algebra Universalis}, 7(2):163--169, 1977.
\newblock \href {https://doi.org/10.1007/BF02485425}
  {\path{doi:10.1007/BF02485425}}.

\bibitem[Day79]{Day79}
Alan Day.
\newblock Characterizations of finite lattices that are bounded-homomorphic
  images or sublattices of free lattices.
\newblock {\em Canadian Journal of Mathematics}, 31(1):69--78, 1979.
\newblock \href {https://doi.org/10.4153/CJM-1979-008-x}
  {\path{doi:10.4153/CJM-1979-008-x}}.

\bibitem[FPR17]{fang_enumeration_2017}
Wenjie Fang and Louis-Fran\c{c}ois Pr\'{e}ville-Ratelle.
\newblock The enumeration of generalized {T}amari intervals.
\newblock {\em European J. Combin.}, 61:69--84, 2017.
\newblock \href {https://doi.org/10.1016/j.ejc.2016.10.003}
  {\path{doi:10.1016/j.ejc.2016.10.003}}.

\bibitem[FZ02]{FominZelevinsky-ClusterAlgebrasI}
Sergey Fomin and Andrei Zelevinsky.
\newblock Cluster algebras. {I}. {F}oundations.
\newblock {\em J. Amer. Math. Soc.}, 15(2):497--529 (electronic), 2002.

\bibitem[FZ03]{FominZelevinsky-ClusterAlgebrasII}
Sergey Fomin and Andrei Zelevinsky.
\newblock Cluster algebras. {II}. {F}inite type classification.
\newblock {\em Invent. Math.}, 154(1):63--121, 2003.

\bibitem[GH93]{garsia_graded_1993}
Adriano~M. Garsia and Mark Haiman.
\newblock A graded representation model for {Macdonald}'s polynomials.
\newblock {\em Proceedings of the National Academy of Sciences of the United
  States of America}, 90(8):3607--3610, 1993.
\newblock URL: \url{http://www.ams.org/mathscinet-getitem?mr=1214091}, \href
  {https://doi.org/10.1073/pnas.90.8.3607} {\path{doi:10.1073/pnas.90.8.3607}}.

\bibitem[GH96]{garsia_natural_1996}
Adriano~M. Garsia and Mark Haiman.
\newblock Some natural bigraded {$S\_n$}-{Modules}.
\newblock {\em The Electronic Journal of Combinatorics}, 3(2):R24, January
  1996.
\newblock URL:
  \url{http://www.combinatorics.org/ojs/index.php/eljc/article/view/v3i2r24}.

\bibitem[Hai94]{haiman_conjectures_1994}
Mark~D. Haiman.
\newblock Conjectures on the quotient ring by diagonal invariants.
\newblock {\em J. Algebraic Combin.}, 3(1):17--76, 1994.
\newblock \href {https://doi.org/10.1023/A:1022450120589}
  {\path{doi:10.1023/A:1022450120589}}.

\bibitem[Hai01]{haiman_hilbert_2001}
Mark Haiman.
\newblock Hilbert schemes, polygraphs and the {Macdonald} positivity
  conjecture.
\newblock {\em Journal of the American Mathematical Society}, 14(4):941--1006,
  2001.
\newblock URL: \url{http://www.ams.org/jams/2001-14-04/S0894-0347-01-00373-3/},
  \href {https://doi.org/10.1090/S0894-0347-01-00373-3}
  {\path{doi:10.1090/S0894-0347-01-00373-3}}.

\bibitem[HL07]{HL07}
Christophe Hohlweg and Carsten E. M.~C. Lange.
\newblock Realizations of the associahedron and cyclohedron.
\newblock {\em Discrete Comput. Geom.}, 37(4):517--543, 2007.
\newblock \href {https://doi.org/10.1007/s00454-007-1319-6}
  {\path{doi:10.1007/s00454-007-1319-6}}.

\bibitem[HLT11]{HLT11}
Christophe Hohlweg, Carsten E. M.~C. Lange, and Hugh Thomas.
\newblock Permutahedra and generalized associahedra.
\newblock {\em Advances in Mathematics}, 226(1):608--640, January 2011.
\newblock \href {https://doi.org/10.1016/j.aim.2010.07.005}
  {\path{doi:10.1016/j.aim.2010.07.005}}.

\bibitem[HNT05]{HNT05}
F.~Hivert, J.-C. Novelli, and J.-Y. Thibon.
\newblock The algebra of binary search trees.
\newblock {\em Theoret. Comput. Sci.}, 339(1):129--165, 2005.
\newblock \href {https://doi.org/10.1016/j.tcs.2005.01.012}
  {\path{doi:10.1016/j.tcs.2005.01.012}}.

\bibitem[Lac22]{SWeakSB}
S.~Lacina.
\newblock {Poset Topology of s-Weak Order via SB-Labelings}.
\newblock {\em Journal of Combinatorics}, 13(3):357 -- 395, 2022.
\newblock \href {https://doi.org/10.4310/JOC.2022.v13.n3.a3}
  {\path{doi:10.4310/JOC.2022.v13.n3.a3}}.

\bibitem[LCdPB94]{PolyBarbut94}
C.~Le~Conte~de Poly-Barbut.
\newblock Sur les treillis de coxeter finis.
\newblock {\em Math\'ematiques et Sciences humaines}, 125:41--57, 1994.

\bibitem[LR98]{LodayRonco98_binarytrees}
Jean-Louis Loday and Mar\'{\i}a~O. Ronco.
\newblock Hopf algebra of the planar binary trees.
\newblock {\em Adv. Math.}, 139(2):293--309, 1998.
\newblock \href {https://doi.org/10.1006/aima.1998.1759}
  {\path{doi:10.1006/aima.1998.1759}}.

\bibitem[LR02]{LodayRonco02_permutations}
Jean-Louis Loday and Mar\'{\i}a~O. Ronco.
\newblock Order structure on the algebra of permutations and of planar binary
  trees.
\newblock {\em J. Algebraic Combin.}, 15(3):253--270, 2002.
\newblock \href {https://doi.org/10.1023/A:1015064508594}
  {\path{doi:10.1023/A:1015064508594}}.

\bibitem[MHPS12]{hoissen12associahedra}
Folkert M\"{u}ller-Hoissen, Jean~Marcel Pallo, and Jim Stasheff, editors.
\newblock {\em Associahedra, {T}amari lattices and related structures}, volume
  299 of {\em Progress in Mathematics}.
\newblock Birkh\"{a}user/Springer, Basel, 2012.
\newblock Tamari memorial Festschrift.
\newblock \href {https://doi.org/10.1007/978-3-0348-0405-9}
  {\path{doi:10.1007/978-3-0348-0405-9}}.

\bibitem[NT20]{novelli_hopf_2014}
Jean-Christophe Novelli and Jean-Yves Thibon.
\newblock Hopf algebras of m-permutations, (m+1)-ary trees, and m-parking
  functions.
\newblock {\em Advances in Applied Mathematics}, 117:102019, June 2020.
\newblock \href {https://doi.org/10.1016/j.aam.2020.102019}
  {\path{doi:10.1016/j.aam.2020.102019}}.

\bibitem[Pon15]{pons_lattice_2015}
Viviane Pons.
\newblock A lattice on decreasing trees : the metasylvester lattice.
\newblock In {\em 27th {International} {Conference} on {Formal} {Power}
  {Series} and {Algebraic} {Combinatorics} ({FPSAC} 2015)}, Discrete {Math}.
  {Theor}. {Comput}. {Sci}. {Proc}., {AS}, pages 381--392. Assoc. Discrete
  Math. Theor. Comput. Sci., Nancy, 2015.

\bibitem[Pon19]{BlogViv}
Viviane Pons.
\newblock On giving my talk, my first baby battle, 2019.
\newblock {Blog} {Post}.
\newblock URL: \url{http://openpyviv.com/2019/07/05/talk/}.

\bibitem[Pon22]{SageDemoI}
V.~Pons.
\newblock Sagemath code and demo for s-weak order and s-permutahedra, 2022.
\newblock \href {https://doi.org/10.5281/zenodo.7459784}
  {\path{doi:10.5281/zenodo.7459784}}.

\bibitem[PP18]{PP18}
V.~Pilaud and V.~Pons.
\newblock Permutrees.
\newblock {\em Algebraic Combinatorics}, 1(2):173--224, 2018.
\newblock \href {https://doi.org/10.5802/alco.1} {\path{doi:10.5802/alco.1}}.

\bibitem[PRV17]{PrevilleRatelleViennot}
Louis-Fran\c{c}ois Pr\'{e}ville-Ratelle and Xavier Viennot.
\newblock The enumeration of generalized {T}amari intervals.
\newblock {\em Trans. Amer. Math. Soc.}, 369(7):5219--5239, 2017.
\newblock \href {https://doi.org/10.1090/tran/7004}
  {\path{doi:10.1090/tran/7004}}.

\bibitem[PS19]{PS19}
Vincent Pilaud and Francisco Santos.
\newblock Quotientopes.
\newblock {\em Bulletin of the London Mathematical Society}, 51(3):406--420,
  2019.
\newblock \href {https://doi.org/10.1112/blms.12231}
  {\path{doi:10.1112/blms.12231}}.

\bibitem[PSZ23]{PSZ23}
Vincent Pilaud, Francisco Santos, and G{\"u}nter~M. Ziegler.
\newblock Celebrating {{Loday}}'s {{Associahedron}}.
\newblock May 2023.
\newblock \href {http://arxiv.org/abs/2305.08471} {\path{arXiv:2305.08471}}.

\bibitem[Rea03]{reading_latticeproperties_2003}
Nathan Reading.
\newblock Lattice and order properties of the poset of regions in a hyperplane
  arrangement.
\newblock {\em Algebra Universalis}, 50(2):179--205, 2003.
\newblock \href {https://doi.org/10.1007/s00012-003-1834-0}
  {\path{doi:10.1007/s00012-003-1834-0}}.

\bibitem[Rea04]{Reading-latticeCongruences}
Nathan Reading.
\newblock Lattice congruences of the weak order.
\newblock {\em Order}, 21(4):315--344 (2005), 2004.
\newblock \href {https://doi.org/10.1007/s11083-005-4803-8}
  {\path{doi:10.1007/s11083-005-4803-8}}.

\bibitem[Rea06]{Reading-cambrianLattices}
Nathan Reading.
\newblock Cambrian lattices.
\newblock {\em Adv. Math.}, 205(2):313--353, 2006.
\newblock \href {https://doi.org/10.1016/j.aim.2005.07.010}
  {\path{doi:10.1016/j.aim.2005.07.010}}.

\bibitem[Rea16]{Reading2016}
N.~Reading.
\newblock {\em Lattice Theory of the Poset of Regions}, pages 399--487.
\newblock Springer International Publishing, Cham, 2016.
\newblock \href {https://doi.org/10.1007/978-3-319-44236-5_9}
  {\path{doi:10.1007/978-3-319-44236-5_9}}.

\bibitem[{The}22]{SageMath2022}
{The Sage Developers}.
\newblock {\em {{SageMath}}, the {{Sage Mathematics Software System}}
  ({{Version}} 9.5)}, 2022.

\end{thebibliography}
\end{document}